\title{\Large The $KH$-Isomorphism Conjecture and Algebraic $KK$-theory}
\author{Paul D.~Mitchener \\
University of Sheffield \\
e-mail: P.Mitchener@sheffield.ac.uk \\
Web-site: http://www.mitchener.staff.shef.ac.uk}
\newenvironment{proof}{\par \noindent{\bf Proof: }}{\hspace{\stretch{1}} $\Box$ \par \mbox{}}
\newcommand{\noproof}{\hspace{\stretch{1}} $\Box$}
\newtheorem{theorem}{Theorem}[section]
\newtheorem{proposition}[theorem]{Proposition}
\newtheorem{lemma}[theorem]{Lemma}
\newtheorem{corollary}[theorem]{Corollary}
{\theorembodyfont{\rmfamily}
\newtheorem{definition}[theorem]{Definition}

}
\newenvironment{theorem*}{\par \medskip \noindent{\bf Theorem }}{\par \mbox{}}
\newenvironment{lemma*}{\par \medskip \noindent{\bf Theorem }}{\par \mbox{}}
\newcommand{\Hom}{\mathop{Hom}}
\newcommand{\End}{\mathop{End}}
\newcommand{\Ob}{\mathop{Ob}}
\newcommand{\KK}{{\mathbb K}{\mathbb K}}
\newcommand{\N}{\mathbb {N}}
\newcommand{\R}{\mathbb {R}}
\newcommand{\K}{\mathbb {K}}
\newcommand{\Z}{\mathbb {Z}}
\newcommand{\Simp}{\mathrm{Simp}}
\newcommand{\sd}{\mathrm{sd}}
\newcommand{\ind}{\mathrm{ind}}
\newcommand{\op}{\mathrm{op}}
\newcommand{\Alg}{\mathrm{Alg}}
\newcommand{\HOM}{\mathrm{HOM}}
\newcommand{\Or}{\mathop{Or}}
\newcommand{\KH}{{\mathbb K}{\mathbb H}}
\newcommand{\EG}{\underline{E}G}
\begin{document}

\maketitle

\section*{Abstract}

In this article we prove that the $KH$-asembly map, as defined by
Bartels and L{\"u}ck, can be described in terms of the algebraic
$KK$-theory of Cortinas and Thom.  The $KK$-theory description of
the $KH$-assembly map is similar to that of the Baum-Connes assembly
map.  In some elementary cases, methods used to prove the
Baum-Connes conjecture also apply to the $KH$-isomorphism
conjecture.

\tableofcontents

\section{Introduction}

Various assembly maps, such as the Baum-Connes assembly map (see
\cite{BCH, Val}) and the Farrell-Jones assembly map (see \cite{FJ})
can be described using an abstract homotopy-theoretic framework,
developed by Davis and L{\"u}ck in \cite{DL}; the machinery is based
upon spectra and equivariant homology theories.

Recently, in \cite{BL}, Bartels and L{\"u}ck introduced the {\em
$KH$-assembly map}.  The $KH$-assembly map is constructed using the
abstract machinery, and resembles the Farrell-Jones assembly map in
algebraic $K$-theory.  However, the $KH$-assembly map uses {\em
homotopy algebraic $K$-theory}, as introduced in \cite{We}, rather
than ordinary algebraic $K$-theory.

Kasparov's bivariant $K$-theory, or $KK$-theory (see for example \cite{Black, Hig3, Ska, Val}
for accounts) is in essence a way of organizing certain maps between
the $K$-theory groups of $C^\ast$-algebras.  The Baum-Connes
assembly map is such a map, and is usually described using
$KK$-theory; some work is needed (see \cite{HP,Mitch6}) to show that the
$KK$-theory assembly map can be constructed using the
Davis-L{\"u}ck machinery.

Recently, in \cite{CoT}, Cortinas and Thom developed a version of bivariant
{\em algebraic} $K$-theory.  This version of $KK$-theory induces
maps between the homotopy algebraic $K$-theory groups of discrete
algebras.  The construction is broadly speaking similar to the
construction of $KK$-theory for locally convex algebras in
\cite{CuT}, but replaces smooth homotopies with algebraic
homotopies.

The purpose of this article is essentially an algebraic version of the converse of
\cite{Mitch6}; we show that the abstract $KH$-assembly map can be
described in terms of algebraic $KK$-theory in terms similar to that
of the Baum-Connes conjecture.  We also look at some elementary
consequences of this description, namely some elementary cases where
one can easily show that the $KH$-assembly map is an isomorphism.

It is fairly easy to write down an algebraic $KK$-theory version of
the Baum-Connes assembly map.  However, to associate this map to the
Davis-L{\"u}ck machinery, we need to write this map at the level of
spectra, and moreover to generalize algebraic $KK$-theory from
algebras to algebroids, and to equivariant algebras based on
groupoids.

Thus, before we look at assembly maps, we describe $KK$-theory
spectra for algebroids and groupoid algebras, before looking at some
basic properties.  These definitions and properties are mainly, but
not entirely, easy generalisations of those in \cite{CoT}.

\section{Algebroids}

The following definition is a slight generalization of the notion of an algebroid in \cite{Mi}.

\begin{definition}
Let $R$ be a commutative unital ring.  A {\em $R$-algebroid}, $\mathcal A$, consists of a set of {\em objects}, $\Ob ({\mathcal A})$, along
with a left $R$-module, $\Hom (a,b)_{\mathcal A}$ for each pair of
objects $a,b\in \Ob ({\mathcal A})$, such that:

\begin{itemize}

\item We have an associative $R$-bilinear {\em composition law}
$$\Hom (b,c)_{\mathcal A} \times \Hom (a,b)_{\mathcal A} \rightarrow
\Hom (a,c)_{\mathcal A}$$

\item Given an element $r\in R$ and morphisms $x\in \Hom
(a,b)_{\mathcal A}$, $y\in \Hom (b,c)_{\mathcal A}$, the equation
$r(xy) = (rx)y = x(ry)$ holds.

\end{itemize}

\end{definition}

We call an $R$-algebroid $\mathcal A$ {\em unital} if it is a
category.  Thus, in a unital $R$-algebroid we have an identity
element $1_a \in \Hom (a,a)_{\mathcal A}$ for each object $a$.

An $R$-algebra can be considered to be the same thing as an
$R$-algebroid with one object.

\begin{definition}
Let $\mathcal A$ and $\mathcal B$ be $R$-algebroids.  Then a
{\em homomorphism} $\alpha \colon {\mathcal A}\rightarrow {\mathcal
B}$ consists of maps $\alpha \colon \Ob ({\mathcal A})\rightarrow
\Ob
({\mathcal B})$ and $R$-linear maps $\alpha \colon \Hom
(a,b)_{\mathcal A}\rightarrow \Hom (\alpha (a), \alpha
(b))_{\mathcal B}$ that are compatible with the composition law.
\end{definition}

Given $R$-algebroids $\mathcal A$ and $\mathcal B$, we write $\Hom
({\mathcal A},{\mathcal B})$ to denote the set of all homomorphisms.

\begin{definition} \label{addc}
Let ${\mathcal A}$ be an $R$-algebroid.  Then we define the {\em additive
completion}, ${\mathcal A}_\oplus$, to be the $R$-algebroid in which
the objects are formal sequences of the form
$$a_1 \oplus \cdots \oplus a_n \qquad a_i \in \Ob ({\mathcal A})$$
where $n\in \N$.  Repetitions are allowed in such formal sequences.
The empty sequence is also allowed, and labelled $0$.

The $R$-module $\Hom (a_1 \oplus \cdots \oplus a_m , b_1 \oplus
\cdots \oplus b_n )_{{\mathcal A}_\oplus }$ is defined to be the set
of matrices of the form
$$\left( \begin{array}{ccc}
x_{11} & \cdots & x_{1m} \\
\vdots & \ddots & \vdots \\
x_{n1} & \cdots & x_{nm} \\
\end{array} \right)  \qquad x_{ij} \in \Hom (a_j , b_i )_{\mathcal A}$$
with element-wise addition and multiplication by elements of the
ring $R$.

The composition law is defined by matrix multiplication.
\end{definition}

Given an algebroid homomorphism $\alpha \colon {\mathcal A} \rightarrow
{\mathcal B}$, there is an induced homomorphism $\alpha_\oplus
\colon {\mathcal A}_\oplus \rightarrow {\mathcal B}_\oplus$ defined by writing
$$\alpha_\oplus (a_1 \oplus \cdots a_n ) = \alpha (a_1) \oplus \cdots \oplus \alpha (a_n) \qquad a_i \in \Ob ({\mathcal A})$$
and
$$\alpha_\oplus \left( \begin{array}{ccc}
x_{1,1} & \cdots & x_{1,m} \\
\vdots & \ddots & \vdots \\
x_{n,1} & \cdots & x_{n,m} \\
\end{array} \right) =
\left( \begin{array}{ccc}
\alpha (x_{1,1}) & \cdots & \alpha (x_{1,m}) \\
\vdots & \ddots & \vdots \\
\alpha (x_{n,1}) & \cdots & \alpha (x_{n,m}) \\
\end{array} \right)  \qquad x_{i,j} \in \Hom (a_j , b_i )$$

With such induced homomorphisms, the process of additive completion
defines a functor from the category of $R$-algebroids and
homomorphisms to itself.

We define the {\em direct sum} of two objects $a= a_1\oplus \cdots
\oplus a_m$ and $b=b_1 \oplus \cdots \oplus b_n$ in the additive
completion ${\mathcal A}_\oplus$ by writing
$$a\oplus b = a_1 \oplus \cdots \oplus a_m \oplus b_1 \oplus \cdots
\oplus b_n$$

In the unital case, the additive completion of a unital
$R$-algebroid is an additive
category;\footnote{See \cite{Mac} or \cite{We2} for relevant
definitions.} direct sums of objects can be defined as above.  The
induced functor, $\alpha_\oplus$, is an additive functor.
Analogous properties hold in the non-unital case.

Finally, given an $R$-algebroid $\mathcal A$, we would like to define an
associated $R$-algebra that carries the same information as the
additive completion
${\mathcal A}_\oplus$.  The naive way to do this is to simply form the direct
limit
$$\bigcup_{a_i \in \Ob ({\mathcal A})} \Hom (a_1 \oplus \cdots \oplus a_n,a_1 \oplus \cdots \oplus a_n)_{{\mathcal A}_\oplus}$$
with respect to the inclusions
$$\Hom (a\oplus c,a\oplus c)_{{\mathcal A}_\oplus} \rightarrow \Hom (a\oplus b\oplus c, a\oplus b\oplus c)_{{\mathcal A}_\oplus}$$
\[
\left( \begin{array}{cc}
w & x \\
y & z \\
\end{array} \right)
\mapsto \left( \begin{array}{ccc}
w & 0 & x \\
0 & 0 & 0 \\
y & 0 & z \\
\end{array} \right)
\]

Unfortunately, the above construction is not functorial.  We can,
however, replace it by an equivalent functorial construction.  This
construction is essentially the same as that used to define the
$K$-theory of $C^\ast$-categories in \cite{Jo2} or \cite{Mitch2.5}.

Let $\mathcal A$ be an $R$-algeboid.  Then we define
${\mathcal O}_{\mathcal A}$ be the category in which the set of
objects consists of all compositions of inclusions $\Hom (a\oplus
c,a\oplus c)_{{\mathcal A}_\oplus} \rightarrow \Hom (a\oplus b\oplus
c,a\oplus b\oplus c)_{{\mathcal A}_\oplus}$ of the form
\[
\left( \begin{array}{cc}
w & x \\
y & z \\
\end{array} \right)
\mapsto \left( \begin{array}{ccc}
w & 0 & x \\
0 & 0 & 0 \\
y & 0 & z \\
\end{array} \right)
\]

A morphism set between two inclusions has precisely one element if
the inclusions are composable; otherwise, it is empty.

We can define a functor, $H_{\mathcal A}$, from the category
${\mathcal O}_{\mathcal A}$ to the category of $R$-algebras by
associating the $R$-algebra $\Hom (a\oplus c,a\oplus c)_{{\mathcal
A}_\oplus}$ to the inclusion $\Hom (a\oplus c,a\oplus c)_{{\mathcal
A}_\oplus} \rightarrow \Hom (a\oplus b\oplus c,a\oplus b\oplus
c)_{{\mathcal A}_\oplus}$.  If $i$ and $j$ are composable
inclusions, then the one morphism in the set $\Hom (i,j)_{{\mathcal
O}_{\mathcal A}}$ is mapped to the inclusion $i$ itself.

\begin{definition} \label{Hfunctor}
Let $\mathcal A$ be an $R$-algebroid.  Then we define the
$R$-algebra ${\mathcal A}_H$ to be the colimit of the functor
$H_{\mathcal A}$.
\end{definition}

The following result is obvious from our constructions.

\begin{proposition} \label{pbFAC}
The assignment ${\mathcal A}_\oplus \mapsto {\mathcal A}_H$ is a
covariant functor, and we have a natural transformation
$J\colon {\mathcal A}_\oplus \rightarrow {\mathcal A}_H$.  The
natural transformation $F$ is surjective on each morphism set.

Further, given a homomorphism $\alpha \colon {\mathcal A}\rightarrow
{\mathcal B}_H$, we have a factorisation
\[ {\mathcal A} \stackrel{\alpha'}{\rightarrow } {\mathcal B}_\oplus
\stackrel{J}{\rightarrow } {\mathcal B}_H . \]
\noproof
\end{proposition}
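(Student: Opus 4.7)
The plan is to verify each clause separately, each following readily from the colimit construction defining $\mathcal{A}_H$. For functoriality, observe that a homomorphism $\alpha \colon \mathcal{A} \to \mathcal{B}$ induces $\alpha_\oplus$, which carries each basic inclusion $\Hom(a \oplus c, a \oplus c)_{\mathcal{A}_\oplus} \hookrightarrow \Hom(a \oplus b \oplus c, a \oplus b \oplus c)_{\mathcal{A}_\oplus}$ appearing as an object of $\mathcal{O}_\mathcal{A}$ to the corresponding inclusion in $\mathcal{O}_\mathcal{B}$, and preserves composability of such chains. This yields a functor $\mathcal{O}_\mathcal{A} \to \mathcal{O}_\mathcal{B}$ together with a natural transformation between the $R$-algebra valued functors $H_\mathcal{A}$ and $H_\mathcal{B}$; passing to colimits gives $\alpha_H \colon \mathcal{A}_H \to \mathcal{B}_H$, and identities and compositions are preserved by inspection.

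The natural transformation $J$ is built as follows. For an endomorphism $x \in \Hom(c, c)_{\mathcal{A}_\oplus}$, take the canonical image of $x$ under the structure map $\Hom(c, c)_{\mathcal{A}_\oplus} \to \mathcal{A}_H$ into the colimit. For a non-endomorphism $x \in \Hom(c, c')_{\mathcal{A}_\oplus}$, view $x$ as the off-diagonal entry $\left( \begin{smallmatrix} 0 & 0 \\ x & 0 \end{smallmatrix} \right)$ in $\Hom(c \oplus c', c \oplus c')_{\mathcal{A}_\oplus}$ and take its image. Multiplicativity of $J$ reduces to a block-matrix computation: the product of the off-diagonal blocks for $x \colon c \to c'$ and $y \colon c' \to c''$, once both are embedded into $\Hom(c \oplus c' \oplus c'', c \oplus c' \oplus c'')_{\mathcal{A}_\oplus}$, equals the off-diagonal block for the composite $x y$. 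Naturality in $\mathcal{A}$ is transparent. Surjectivity on each morphism set is immediate: every element of the directed-union colimit $\mathcal{A}_H$ lies in the image of some endomorphism algebra $\Hom(c, c)_{\mathcal{A}_\oplus}$, which is itself a morphism set of $\mathcal{A}_\oplus$.

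For the factorisation, given $\alpha \colon \mathcal{A} \to \mathcal{B}_H$ I would select for each object $a$ of $\mathcal{A}$ an object $\alpha'(a) \in \Ob(\mathcal{B}_\oplus)$ and for each morphism $x \colon a \to b$ a matrix $\alpha'(x) \in \Hom(\alpha'(a), \alpha'(b))_{\mathcal{B}_\oplus}$ satisfying $J(\alpha'(x)) = \alpha(x)$. I expect the principal obstacle to be making globally consistent choices so that $\alpha'$ is a genuine algebroid homomorphism: the object $\alpha'(a) \oplus \alpha'(b)$ must be large enough that every $\alpha(x)$ with $x \colon a \to b$ is representable in $\Hom(\alpha'(a) \oplus \alpha'(b), \alpha'(a) \oplus \alpha'(b))_{\mathcal{B}_\oplus}$, and the chosen representatives must compose correctly. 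This is handled by the directed-colimit structure of $\mathcal{B}_H$, which lets one enlarge summands and use the surjectivity of $J$ to pick matrix representatives, while the multiplicativity of $J$ established above forces the resulting assignments to respect composition and $R$-linearity. No uniqueness is asserted, so the freedom to enlarge $\alpha'(a)$ is harmless.
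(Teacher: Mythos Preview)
The paper offers no proof of this proposition beyond the remark that it is ``obvious from our constructions''.  Your treatment of functoriality, the construction of $J$, and surjectivity fills in the omitted details in essentially the only reasonable way, and matches what the author evidently had in mind.

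The factorisation clause is where your argument has a genuine gap.  You correctly isolate the obstacle: once $\alpha'(a)$ and $\alpha'(b)$ are fixed, the entire image $\alpha(\Hom(a,b)_{\mathcal A})$ must land in the image of $\Hom(\alpha'(a),\alpha'(b))_{\mathcal{B}_\oplus}$ under $J$.  You then assert that ``this is handled by the directed-colimit structure of $\mathcal{B}_H$'', but directedness only lets you accommodate \emph{finitely many} elements of the colimit in a common stage, whereas $\alpha'(a)$ must be a single object of $\mathcal{B}_\oplus$ chosen once and for all, independent of the morphism $x$.  In fact the clause is false as stated.  Take $\mathcal{B}$ to be the one-object $R$-algebroid with endomorphism algebra $R$; then $\mathcal{B}_\oplus$ has objects $b^{\oplus n}$ with $\Hom(b^{\oplus n},b^{\oplus n})=M_n(R)$, and $\mathcal{B}_H\cong M_\infty(R)$.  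With $\mathcal{A}=M_\infty(R)$ (one object) and $\alpha$ the identity, any factorisation $\alpha=J\circ\alpha'$ would force the identity map on $M_\infty(R)$ to factor through some inclusion $M_n(R)\hookrightarrow M_\infty(R)$, which is impossible.  So the difficulty you flag is not a technicality but an actual obstruction, and no refinement of your sketch can close it.  The paper's later uses of this clause (for instance in describing points of $\HOM(J^{2n}\mathcal{A},\mathcal{B}_H^{S^n})$) should presumably be read either with finiteness hypotheses left implicit or with a tacit passage to ind-objects.
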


\section{Tensor Products} \label{tensor}

\begin{definition}
Let $\mathcal A$ and $\mathcal B$ be $R$-algebroids.  Then we define
the {\em tensor product} ${\mathcal A}\otimes_R {\mathcal B}$ to be
the $R$-algebroid with the set of objects $\Ob ({\mathcal A})\times
\Ob ({\mathcal B})$.  We write the object $(a,b)$ in the form
$a\otimes b$.  The $R$-module $\Hom (a\otimes b,a'\otimes
b')_{{\mathcal A}\otimes_R {\mathcal B}}$ is the tensor product of
$R$-modules $\Hom (a,a')_{\mathcal A}\otimes_R \Hom
(b,b')_{\mathcal B}$.

The composition law is defined in the obvious way.
\end{definition}

If we view an $R$-algebra as an $R$-algebroid with just
one object, we can form the tensor product,
${\mathcal A}\otimes_R B$, of an $R$-algebroid $\mathcal A$ with an
$R$-algebra $B$.  The objects of the tensor product ${\mathcal
A}\otimes_R B$ are identified with the objects of the algebroid
$\mathcal A$.

\begin{definition}
Let $R$ be a commutative ring with an identity element.  An {\em $R$-moduloid}, $\mathcal E$,
consists of a collection of objects $\Ob ({\mathcal E})$, along with
a left $R$-module, $\Hom (a,b)_{\mathcal E}$ defined for each pair
of objects $a,b\in \Ob ({\mathcal E})$.

A {\em homomorphism}, $\phi \colon {\mathcal E} \rightarrow
{\mathcal F}$, between $R$-moduloids consists of a map $\phi \colon
\Ob ({\mathcal E}) \rightarrow \Ob ({\mathcal F})$ and a collection
of $R$-linear maps $\phi \colon \Hom (a,b)_{\mathcal E}\rightarrow
\Hom (\phi (a), \phi (b))_{\mathcal F}$.
\end{definition}

The difference between an $R$-moduloid and an $R$-algebroid is that
there is no composition law between the various $R$-bimodules $\Hom
(a,b)_{\mathcal E}$.  There is of course a forgetful functor, $F$,
from the category of $R$-algebroids and homomorphisms to the
category of $R$-moduloids and homomorphisms.

\begin{definition}
Let $\mathcal A$ be an $R$-moduloid.  Given objects $a,b\in \Ob
({\mathcal A})$, let us define
\[
\Hom (a,b)^{\otimes (k+1)}_{\mathcal A} = \bigoplus_{c_i \in \Ob
({\mathcal A})} \Hom (a, c_1)\otimes_R \Hom (c_1,c_2) \otimes_R
\cdots \otimes \Hom (c_k,b)
\]

The {\em tensor algebroid}, $T {\mathcal A}$, is the $R$-algebroid
with the same set of objects as the $R$-moduloid $\mathcal A$ where
the morphism set $\Hom (a,b)_{T{\mathcal A}}$ is the direct sum
\[
\bigoplus_{k=1}^\infty \Hom (a,b)^{\otimes k}_{\mathcal A}
\]

Here the the $R$-module $\Hom (a,b)^{\otimes 1}_{\mathcal A}$ is simply
the morphism set $\Hom (a,b)_{\mathcal A}$.  Composition of
morphisms in the tensor category is defined by concatenation of
tensors.
\end{definition}

Formation of the tensor category defines a functor, $T$, from the
category of $R$-moduloids to the category of $R$-algebroids.  We
will abuse notation slightly, and also write $T{\mathcal A}$ to
denote the tensor algebroid when $R$ is already an $R$-algebroid.
This definition of course ignores the multiplicative structure.

\begin{proposition} \label{TFadjoint}
The functor $T$ is naturally left-adjoint to the forgetful functor
$F$.
\end{proposition}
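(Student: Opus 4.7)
The plan is to exhibit, for any $R$-moduloid $\mathcal{E}$ and any $R$-algebroid $\mathcal{A}$, a natural bijection
\[ \Phi \colon \Hom (T{\mathcal E}, {\mathcal A}) \stackrel{\cong}{\longrightarrow} \Hom ({\mathcal E}, F{\mathcal A}) \]
where the left-hand set consists of algebroid homomorphisms and the right-hand set consists of moduloid homomorphisms. The unit of the adjunction will be the obvious moduloid homomorphism $\eta_{\mathcal E} \colon {\mathcal E} \to FT{\mathcal E}$ which is the identity on objects and includes each $R$-module $\Hom (a,b)_{\mathcal E}$ as the length-one summand $\Hom (a,b)^{\otimes 1}_{\mathcal E}$ of $\Hom (a,b)_{T{\mathcal E}}$. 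Then $\Phi$ sends an algebroid homomorphism $\psi \colon T{\mathcal E} \to {\mathcal A}$ to the composite $F\psi \circ \eta_{\mathcal E}$.

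To construct the inverse $\Psi$, start with a moduloid homomorphism $\phi \colon {\mathcal E} \to F{\mathcal A}$. Define $\tilde\phi \colon T{\mathcal E} \to {\mathcal A}$ on objects by $\tilde\phi (a) = \phi (a)$, and on an elementary tensor
\[ x_1 \otimes x_2 \otimes \cdots \otimes x_k \in \Hom (a,c_1)\otimes_R \Hom (c_1,c_2)\otimes_R \cdots \otimes_R \Hom (c_{k-1},b) \]
by the formula $\tilde\phi (x_1 \otimes \cdots \otimes x_k) = \phi (x_k) \phi (x_{k-1}) \cdots \phi (x_1)$ (or the obvious left-to-right convention, depending on how composition is read), where the product on the right is formed in ${\mathcal A}$. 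Extend $R$-linearly across the direct sum over tensor lengths and across the direct sum indexed by the intermediate objects $c_i$. The $R$-balancing in each tensor factor is absorbed by the $R$-bilinearity of composition in ${\mathcal A}$, so $\tilde\phi$ is well-defined on each $R$-module $\Hom (a,b)^{\otimes k}_{\mathcal E}$; compatibility with the composition law of $T{\mathcal E}$ is immediate because composition in $T{\mathcal E}$ is, by definition, concatenation of tensors, and concatenation is sent by $\tilde\phi$ to the corresponding iterated composition in ${\mathcal A}$.

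The two assignments $\Phi$ and $\Psi$ are then verified to be mutually inverse: $\Phi \Psi (\phi) = \phi$ because evaluating $\tilde\phi$ on a length-one tensor $x$ gives $\phi (x)$, and $\Psi \Phi (\psi) = \psi$ because any algebroid homomorphism out of $T{\mathcal E}$ is determined by its restriction to the generators $\Hom (a,b)^{\otimes 1}_{\mathcal E}$. Naturality in both variables is a routine check, following from the fact that $\eta$ is natural in $\mathcal E$.

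The only step that requires genuine care is checking that $\tilde\phi$ is well-defined on the tensor product of $R$-modules, i.e.\ that the defining formula respects the relation $(rx)\otimes y = x \otimes (ry)$; this is exactly where the axiom $r(xy) = (rx)y = x(ry)$ in the definition of an $R$-algebroid is used, so there is no real obstacle, just bookkeeping. Everything else is formal from the universal property of the direct sum and the tensor product of $R$-modules.
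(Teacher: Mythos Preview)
Your argument is correct and is essentially the same as the paper's: both construct the bijection by restricting an algebroid homomorphism $T{\mathcal E}\to{\mathcal A}$ to the length-one tensors (your $\Phi$, the paper's $G$), and extend a moduloid homomorphism multiplicatively over tensors (your $\Psi$, the paper's $H$). You are somewhat more explicit than the paper in naming the unit $\eta_{\mathcal E}$ and in isolating the well-definedness check for $R$-balancing, but there is no substantive difference in strategy.
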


\begin{proof}
We need a natural $R$-linear bijection between the morphism sets
$\Hom
(T{\mathcal A},{\mathcal B})$ and $\Hom ({\mathcal A}, F{\mathcal
B})$ when $\mathcal A$ is an $R$-moduloid and $\mathcal B$ is an
$R$-algebroid.

Let $\alpha \colon T{\mathcal A}\rightarrow {\mathcal B}$ be a
homomorphism of $R$-algebroids.  The morphism set $\Hom
(a,b)_{T{\mathcal A}}$ is the sum
$$\bigoplus_{k=1 \atop c_i \in \Ob ({\mathcal A})}^\infty
\Hom (a, c_1)\otimes_R \Hom (c_1,c_2) \otimes_R \cdots \otimes \Hom
(c_k,b)$$

We have an induced homomorphism of $R$-moduloids, $G({\alpha
})\colon {\mathcal A}\rightarrow F{\mathcal B}$, defined to be
$\alpha$ on the set of objects, and by the restriction $G({\alpha})
= \alpha |_{\Hom (a,b)_{\mathcal A}}$ on morphism sets.

Conversely, given an $R$-moduloid homomorphism $\beta \colon
{\mathcal A}\rightarrow F{\mathcal B}$, we have an $R$-algebroid
homomorphism $H({\beta }) \colon T{\mathcal A}\rightarrow {\mathcal
B}$, defined to be $\beta$ on the set of objects, and by the formula
$$H({\beta })(x_1 \otimes \cdots \otimes x_k) = \beta (x_1)\cdots \beta (x_k)$$
for each morphism of the form $x_1 \otimes \cdots \otimes x_k$ in
the tensor category.

It is easy to check that the maps $G$ and $H$ are $R$-linear, natural, and mutually inverse.
\end{proof}

Now, let $\mathcal A$ be an $R$-algebroid.  Then there is a
canonical homomorphism {\em of $R$-moduloids} $\sigma \colon
{\mathcal A}\rightarrow T{\mathcal A}$ defined by mapping each
morphism set of the category $\mathcal A$ onto the first summand.

\begin{proposition}
Let $\alpha \colon {\mathcal A}\rightarrow {\mathcal B}$ be a
homomorphism of $R$-algebroids.  Then there is a unique
$R$-algebroid homomorphism $\varphi \colon T{\mathcal A}\rightarrow
{\mathcal B}$ such that $\alpha = \varphi \circ \sigma$.
\end{proposition}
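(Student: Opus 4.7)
The plan is to deduce this as a direct consequence of the adjunction of Proposition \ref{TFadjoint}, viewing $\alpha$ through the forgetful functor $F$ and then applying the bijection between $\Hom(T\mathcal{A},\mathcal{B})$ and $\Hom(\mathcal{A},F\mathcal{B})$ supplied by that proposition.

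For existence, I would first forget the multiplicative structure on $\mathcal A$ to view $\alpha$ as a moduloid homomorphism $F\alpha \colon \mathcal A \to F\mathcal B$, and then take $\varphi := H(F\alpha) \colon T\mathcal A \to \mathcal B$, where $H$ is the map constructed in the proof of Proposition \ref{TFadjoint} by the formula $H(\beta)(x_1 \otimes \cdots \otimes x_k) = \beta(x_1)\cdots \beta(x_k)$. Since $\sigma$ identifies each morphism set $\Hom(a,b)_{\mathcal A}$ with the first summand $\Hom(a,b)^{\otimes 1}_{\mathcal A}$ inside $\Hom(a,b)_{T\mathcal A}$, and since $H(F\alpha)$ acts as $F\alpha$ on that first summand, we obtain $\varphi \circ \sigma = \alpha$ as homomorphisms of $R$-moduloids.

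For uniqueness, suppose $\varphi, \varphi' \colon T\mathcal A \to \mathcal B$ are $R$-algebroid homomorphisms with $\varphi \circ \sigma = \varphi' \circ \sigma = \alpha$. Then $\varphi$ and $\varphi'$ agree on the set of objects (which $\sigma$ maps bijectively) and on the first summand $\Hom(a,b)^{\otimes 1}_{\mathcal A}$ of each morphism set. Because they are algebroid homomorphisms, they must also agree on every elementary tensor $x_1 \otimes \cdots \otimes x_k = \sigma(x_1) \cdots \sigma(x_k)$, and then by $R$-linearity they agree on the whole direct sum $\bigoplus_{k\geq 1} \Hom(a,b)^{\otimes k}_{\mathcal A}$.

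I do not expect any real obstacle here: the universal property is essentially built into the construction of $T\mathcal A$, and after Proposition \ref{TFadjoint} the only substantive observation needed is that the image of $\sigma$ generates $T\mathcal A$ as an $R$-algebroid, which is immediate from the definition of composition as concatenation of tensors.
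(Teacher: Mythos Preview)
Your proposal is correct and takes essentially the same approach as the paper: both define $\varphi$ by the formula $\varphi(x_1\otimes\cdots\otimes x_k)=\alpha(x_1)\cdots\alpha(x_k)$ on elementary tensors and obtain uniqueness from the fact that the image of $\sigma$ generates $T\mathcal A$ as an $R$-algebroid. The only difference is cosmetic: you route the construction through the map $H$ from Proposition~\ref{TFadjoint}, while the paper simply writes down the same formula directly and declares uniqueness to be easy.
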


\begin{proof}
We can define the required homomorphism $\varphi \colon
T{\mathcal A}\rightarrow {\mathcal B}$ by writing
$\varphi (a) = \alpha (a)$ for each object $a\in \Ob ({\mathcal
A})$, and
\[
\varphi (x_1 \otimes \cdots \otimes x_n) = \alpha (x_n)\ldots \alpha
(x_1)
\]
for morphisms $x_i \in \Hom (c_i,c_{i+1})_{{\mathcal A}}$.  It is
easy to see that
$\varphi$ is the unique $R$-algebroid homomorphism with the property
that $\alpha =\varphi \circ \sigma$.
\end{proof}

We have a natural homomorphism $\pi \colon T{\mathcal A}\rightarrow
{\mathcal A}$ defined to be the identity on the set of objects, and
by the formula
\[
\varphi (x_1 \otimes \cdots \otimes x_n) = x_n\ldots x_1
\]
for morphisms $x_i \in \Hom (c_i,c_{i+1})$.  It follows that there
is an $R$-algebroid $J{\mathcal A}$ with the same objects as the
category $\mathcal A$, and morphism sets
\[
\Hom (a,b)_{J{\mathcal A}} = \ker (\pi \colon \Hom (a,b)_{T{\mathcal
A}} \rightarrow \Hom (a,b)_{\mathcal A})
\]

\begin{definition}
A sequence of $R$-algebroids and homomorphisms
$$0\rightarrow {\mathcal I}\stackrel{i}{\rightarrow} {\mathcal
E}\stackrel{j}{\rightarrow} {\mathcal B}\rightarrow 0$$ is called a
{\em short exact sequence} if the $R$-algeboids $\mathcal I$,
$\mathcal E$, and $\mathcal B$ all have the same sets of objects,
and for all such object $a$ and
$b$ our sequence restricts to a short exact sequence of abelian
groups:
$$0\rightarrow \Hom (a,b)_{\mathcal I} \rightarrow \Hom
(a,b)_{\mathcal A} \rightarrow \Hom (a,b)_{\mathcal E}\rightarrow
0$$

We call the above short exact sequence {\em $F$-split} if there is a
homomorphism of $R$-moduloids (but not necessarily $R$-algebroids),
$s\colon {\mathcal B}\rightarrow {\mathcal E}$ such that $j\circ s
=1_{\mathcal C}$.

We call the homomorphism $s$ an {\em $F$-splitting} of the short
exact sequence.
\end{definition}

Our definitions make it clear that, for an $R$-algeboid $\mathcal
A$, the tensor algebroid fits into a natural short exact sequence
\[
0\rightarrow J{\mathcal A} \hookrightarrow T{\mathcal A}
\stackrel{\pi}{\rightarrow} {\mathcal A}\rightarrow 0
\]
with natural $F$-splitting $\sigma \colon {\mathcal A}\rightarrow
T{\mathcal A}$.

\begin{definition}
The above short exact sequence is called the {\em universal
extension} of $\mathcal A$.
\end{definition}

The following result is easy to check.

\begin{proposition} \label{jtens}
Let $\mathcal A$ and $\mathcal C$ be $R$-algebroids.  Then we have a
natural $F$-split short exact sequence
$$0 \rightarrow (J{\mathcal A})\otimes_R {\mathcal C} \rightarrow (T{\mathcal A})\otimes_R {\mathcal C} \rightarrow {\mathcal A}\otimes_R {\mathcal C} \rightarrow 0$$
\noproof
\end{proposition}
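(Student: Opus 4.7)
The plan is to obtain the sequence pointwise from the universal extension of $\mathcal A$, using the fact that it is $F$-split. Recall that by definition, a short exact sequence of $R$-algebroids (with common object set) is exact precisely when, for each pair of objects, the induced sequence of morphism $R$-modules is short exact. So I would start with the universal extension
\[
0 \to J\mathcal A \to T\mathcal A \stackrel{\pi}{\to} \mathcal A \to 0
\]
and extract, for each pair $a, a' \in \Ob(\mathcal A)$, the short exact sequence of $R$-modules
\[
0 \to \Hom(a,a')_{J\mathcal A} \to \Hom(a,a')_{T\mathcal A} \to \Hom(a,a')_{\mathcal A} \to 0.
\]

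Next I would exploit the $F$-splitting $\sigma \colon \mathcal A \to T\mathcal A$. Although $\sigma$ is not an algebroid homomorphism, it is $R$-linear on each morphism set and satisfies $\pi \circ \sigma = 1_{\mathcal A}$. Consequently the displayed sequence of $R$-modules is split as an $R$-module sequence, for every choice of $a, a'$. Since tensoring any split short exact sequence of $R$-modules with a fixed $R$-module preserves exactness, for each pair $(c, c')$ of objects in $\mathcal C$ the sequence
\[
0 \to \Hom(a,a')_{J\mathcal A}\otimes_R \Hom(c,c')_{\mathcal C} \to \Hom(a,a')_{T\mathcal A}\otimes_R \Hom(c,c')_{\mathcal C} \to \Hom(a,a')_{\mathcal A}\otimes_R \Hom(c,c')_{\mathcal C} \to 0
\]
is exact. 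But this is precisely the sequence of Hom modules of the proposed short exact sequence of algebroids, evaluated at the objects $a \otimes c$ and $a' \otimes c'$.

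To assemble this into the stated $F$-split short exact sequence of $R$-algebroids, I would verify the routine facts: the inclusion $J\mathcal A \hookrightarrow T\mathcal A$ and the quotient $\pi$ tensor with $1_{\mathcal C}$ to give algebroid homomorphisms $(J\mathcal A)\otimes_R \mathcal C \to (T\mathcal A)\otimes_R \mathcal C$ and $(T\mathcal A)\otimes_R \mathcal C \to \mathcal A \otimes_R \mathcal C$, since the composition law on a tensor product algebroid is defined factor-wise. For the $F$-splitting, I take $s = \sigma \otimes 1_{\mathcal C}$, which is $R$-linear on every morphism set (so is a well-defined homomorphism of $R$-moduloids) and satisfies $(\pi \otimes 1)\circ (\sigma \otimes 1) = 1$. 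Naturality in $\mathcal A$ and $\mathcal C$ is immediate from the naturality of the universal extension and of the tensor product.

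The only mildly delicate point is keeping straight the distinction between algebroid homomorphisms and moduloid homomorphisms: the splitting is at the moduloid level, but that is exactly what is needed both to deduce $R$-module exactness before tensoring and to exhibit the $F$-splitting afterwards. Everything else reduces to the elementary observation that $-\otimes_R \Hom(c,c')_{\mathcal C}$ preserves split short exact sequences.
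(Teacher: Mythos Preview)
Your argument is correct and is precisely the routine verification the paper has in mind: the paper states the result as ``easy to check'' and gives no proof, so there is nothing further to compare against. The key point you identify---that the $F$-splitting $\sigma$ makes each sequence of $\Hom$-modules split over $R$, whence tensoring with $\Hom(c,c')_{\mathcal C}$ preserves exactness, and that $\sigma\otimes 1_{\mathcal C}$ furnishes the required $F$-splitting---is exactly the intended argument.
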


\begin{theorem} \label{ufsplit}
Let
$$0\rightarrow {\mathcal I}\stackrel{i}{\rightarrow} {\mathcal E} \stackrel{j}{\rightarrow} {\mathcal A}\rightarrow 0$$
be an $F$-split short exact sequence.  Then we have a natural
homomorphism $\gamma \colon J{\mathcal A}\rightarrow {\mathcal I}$
fitting into a commutative diagram
$$\begin{array}{ccccc}
J{\mathcal A} & \rightarrow & T{\mathcal A} & \rightarrow & {\mathcal A} \\
\downarrow & & \downarrow & & \| \\
{\mathcal I} & \rightarrow & {\mathcal E} & \rightarrow & {\mathcal A} \\
\end{array}$$

If our $F$-splitting is a homomorphism of $R$-algebroids,
then the homomorphism $\gamma$ is the zero map.
\end{theorem}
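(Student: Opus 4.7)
The plan is to construct $\gamma$ using the universal property of the tensor algebroid. Since $s\colon \mathcal{A}\to \mathcal{E}$ is a homomorphism of $R$-moduloids, the adjunction of Proposition~\ref{TFadjoint} gives a unique $R$-algebroid homomorphism $\tilde{s}\colon T\mathcal{A}\to \mathcal{E}$ extending $s$ in the sense that $\tilde{s}\circ \sigma = s$; explicitly, $\tilde{s}(x_1\otimes\cdots\otimes x_n) = s(x_n)\cdots s(x_1)$ on morphisms.

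Next I would verify that $j\circ \tilde{s} = \pi$. Both sides are $R$-algebroid homomorphisms $T\mathcal{A}\to \mathcal{A}$, and precomposing with $\sigma$ gives $j\circ s = 1_\mathcal{A} = \pi\circ \sigma$; uniqueness in the adjunction forces $j\circ \tilde{s} = \pi$. Consequently $\tilde{s}$ carries $J\mathcal{A} = \ker(\pi)$ into $\ker(j) = i(\mathcal{I})$, and since $i$ is injective there is a unique $\gamma\colon J\mathcal{A}\to \mathcal{I}$ with $i\circ \gamma = \tilde{s}|_{J\mathcal{A}}$. The commutativity of the stated diagram then follows: the middle square commutes by construction ($\tilde{s}$ restricts to $i\circ\gamma$ on $J\mathcal{A}$ and equals $\tilde{s}$ on $T\mathcal{A}$), and the right-hand square commutes because $j\circ \tilde{s} = \pi$. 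Naturality in the short exact sequence is automatic, as every ingredient ($T$, $J$, the adjunction $\tilde{s}\mapsto s$) is functorial.

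For the final claim, suppose the $F$-splitting $s$ is in fact a homomorphism of $R$-algebroids. Observe that $\pi\circ \sigma = 1_\mathcal{A}$, so $s\circ \pi\circ \sigma = s$; thus $s\circ \pi\colon T\mathcal{A}\to \mathcal{E}$ is an $R$-algebroid homomorphism extending $s$, and uniqueness in the adjunction gives $\tilde{s} = s\circ \pi$. For any morphism $x\in J\mathcal{A} = \ker(\pi)$ we then have $\tilde{s}(x) = s(\pi(x)) = 0$, so $i\circ \gamma = 0$, and injectivity of $i$ yields $\gamma = 0$.

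The main point is really the observation that the universal property of $T\mathcal{A}$ turns the $F$-splitting into a genuine $R$-algebroid map $\tilde{s}$; once that is in place, everything else is diagram chasing. The only thing to be careful about is the order reversal in the formula for $\tilde{s}$, which is forced by the convention chosen earlier for the extension $\varphi$ of $\alpha$, but it does not affect the argument since we only use that $\tilde{s}$ is an algebroid homomorphism with $\tilde{s}\circ\sigma = s$.
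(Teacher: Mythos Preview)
Your proof is correct and follows essentially the same approach as the paper: both use the adjunction of Proposition~\ref{TFadjoint} to extend the $F$-splitting $s$ to an $R$-algebroid homomorphism $T\mathcal{A}\to\mathcal{E}$ (the paper calls it $H(s)$, you call it $\tilde{s}$), then restrict to $J\mathcal{A}$ to obtain $\gamma$, and in the split case identify this extension with $s\circ\pi$. Your write-up is in fact more explicit than the paper's in verifying $j\circ\tilde{s}=\pi$ via uniqueness and in justifying $\tilde{s}=s\circ\pi$, but the underlying argument is the same.
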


\begin{proof}
Since the exact sequence we are looking at is $F$-split, there is an
$R$-moduloid homomorphism $s\colon {\mathcal A}\rightarrow {\mathcal
E}$ such that $j\circ s =1_{\mathcal A}$.  By proposition
\ref{TFadjoint}, there is a natural $R$-algebroid homomorphism
$H(s)\colon T{\mathcal A}\rightarrow {\mathcal E}$ fitting into the
above diagram.

The homomorphism $\gamma$ is defined by restriction of the
homomorphism $H(s)$.

Now, suppose that the above $R$-moduloid homomorphism $s$ is
actually an algebroid homomorphism.  Then, by exactness, we have a
commutative diagram
$$\begin{array}{ccccc}
J{\mathcal A} & \rightarrow & T{\mathcal A} & \stackrel{\pi}{\rightarrow} & {\mathcal A} \\
\downarrow & & \downarrow & & \| \\
{\mathcal I} & \rightarrow & {\mathcal E} & \rightarrow & {\mathcal A} \\
\end{array}$$
where the vertical homomorphism on the left is zero, and central
vertical homomorphism is the composition $s\circ \pi$.
\end{proof}

\begin{definition}
We call the above homomorphism $\gamma$ the {\em classifying map} of
the diagram
$$\begin{array}{ccccc}
J{\mathcal A} & \rightarrow & T{\mathcal A} & \rightarrow & {\mathcal A} \\
\downarrow & & \downarrow & & \| \\
{\mathcal I} & \rightarrow & {\mathcal E} & \rightarrow & {\mathcal A} \\
\end{array}$$
\end{definition}

\section{Algebraic Homotopy and Simplicial Enrichment} \label{alghom}

For each natural number, $n\in \N$, write
\[ \Z^{\Delta^n} = \frac{\Z [t_0,\ldots ,t_n]}{\langle 1-\sum_{i=0}^n
t_i \rangle } \]

In particular, we have $\Z^{\Delta^0} = \Z$ and $\Z^{\Delta^1} = \Z
[t]$.  The sequence of rings $(\Z^{\Delta^n})$ combines with the
obvious face and degeneracy maps to form a simplicial ring
$\Z^\Delta$.  We refer to \cite{CoT} for details.

Given an $R$-algebroid $\mathcal A$, we can also consider $\mathcal
A$ to be a $\Z$-algebroid, and so form tensor products ${\mathcal
A}\otimes_\Z \Z^{\Delta^n}$, and the simplicial $R$-algebroid
${\mathcal A}^\Delta$.

In the case $n=1$, there are obvious homomorphisms $e_i \colon {\mathcal A}^{\Delta^1}\rightarrow
{\mathcal A}$ defined by the evaluation of a polynomial at a point
$i\in \Z$.

\begin{definition}
Let $f_0,f_1 \colon {\mathcal A}\rightarrow {\mathcal B}$ be
homomorphisms of $R$-algebroids.  An {\em elementary homotopy}
between $f_0$ and $f_1$ is a homomorphism $h\colon {\mathcal
A}\rightarrow {\mathcal B}^{\Delta^1}$ such that $e_0 \circ h =f_0$
and $e_1 \circ h = f_1$.

We call $f_0$ and $f_1$ {\em algebraically homotopic} if they are
linked by a chain of elementary homotopies.  We write
$[{\mathcal A},{\mathcal B}]$ to denote the set of algebraic homotopy
classes of homomorphisms from $\mathcal A$ to $\mathcal B$.
\end{definition}

Let us view the geometric $n$-simplex $\Delta^n$ as a simplicial set.
Given a simplicial set $X$, recall (see for example \cite{GJ}) that
a {\em simplex} of $X$ is a simplicial map $f\colon \Delta^n
\rightarrow X$.  We define the {\em simplex category of $X$},
$\Delta \downarrow X$, to be the category in which the objects are
the simplices of $X$, and the morphisms are commutative diagrams
$$\begin{array}{ccc}
\Delta^n & \rightarrow & X \\
\downarrow & & \| \\
\Delta^m & \rightarrow & X \\
\end{array}$$

The simplex category has the feature that the simplicial set $X$ is
naturally isomorphic to the direct limit
$$\lim_{\Delta^n \rightarrow X} \Delta^n$$
taken over the simplex category.

\begin{definition}
Let $X$ be a simplicial set.  Let $\mathcal A$ be an $R$-algebroid.
Then we define ${\mathcal A}^X$ to be the direct limit
$${\mathcal A}^X = \lim_{\Delta^n \rightarrow X} {\mathcal A}^{\Delta^n}$$
taken over the simplex category.
\end{definition}

The assignment ${\mathcal A}^- \colon X\mapsto {\mathcal A}^X$ is a
contravariant functor.  Let $X$ be a simplicial set with a
basepoint, $+$ (in the sense of simplicial sets; see again
\cite{GJ}).
Then there is a functorially induced map
${\mathcal
A}^X \rightarrow {\mathcal A}={\mathcal A}^+$ arising from the inclusion
$+\hookrightarrow X$.  We define
$${\mathcal A}^{X,+} = \ker ({\mathcal A}^X\rightarrow {\mathcal A})$$

Given pointed simplicial sets $X$ and $Y$, it is easy to check that the
$R$-algebroids $({\mathcal A}^X)^Y$ and ${\mathcal A}^{X\wedge Y}$
are naturally isomorphic.  The product $X\wedge Y$ is of course the
smash product of pointed simplicial sets.  In particular, if $S^m$
and $S^n$ are simplicial spheres, then the $R$-algebroids
$({\mathcal A}^{S^m})^{S^n}$ and ${\mathcal A}^{S^{m+n}}$ are
naturally isomorphic.

Recall from proposition \ref{pbFAC} that there is a functor ${\mathcal A}\mapsto {\mathcal A}_H$
from the category of $R$-algebroids to the category of $R$-algebras.
Naturality of the relevant constructions immediately gives us the
following result.

\begin{proposition}
Let $\mathcal A$ be an $R$-algebroid, and let $X$ be a simplicial
set.  Then the $R$-algebras $({\mathcal A}_H)^X$ and
$({\mathcal
A}^X)_H$ are naturally isomorphic.
\noproof
\end{proposition}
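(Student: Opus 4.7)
The plan is to show that both algebras are naturally isomorphic to a single intermediate object, namely $\mathcal{A}_H \otimes_\Z \Z^X$, where $\Z^X := \lim_{\Delta^n \to X} \Z^{\Delta^n}$ is the direct limit of the simplicial ring $\Z^\Delta$ evaluated over the simplex category of $X$. The whole argument is driven by the observation that every construction involved --- the additive completion, the $H$-functor, tensoring with a ring, and the formation of $(-)^X$ --- is a (co)limit built out of tensor products, so things commute for purely formal reasons. Naturality in $\mathcal A$ and $X$ will be automatic from the canonical nature of all the maps.

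First I would establish the key intermediate lemma: for any $R$-algebroid $\mathcal A$ and any $R$-algebra $B$, there is a natural isomorphism $(\mathcal{A} \otimes_R B)_H \cong \mathcal{A}_H \otimes_R B$. To see this, note that $(\mathcal{A} \otimes_R B)_\oplus$ and $\mathcal{A}_\oplus \otimes_R B$ have the same formal-sum objects (tensoring with an algebra does not change the object set) and their morphism sets are both the matrices with entries in $\Hom(a_j,b_i)_{\mathcal A} \otimes_R B$. Under this identification the poset $\mathcal{O}_{\mathcal A \otimes_R B}$ coincides with $\mathcal{O}_{\mathcal A}$, and the endomorphism algebras appearing in the definition of $H$ (Definition \ref{Hfunctor}) are $\End(a_1 \oplus \cdots \oplus a_n)_{\mathcal A_\oplus} \otimes_R B$. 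Since tensor product with $B$ commutes with the colimit over $\mathcal{O}_{\mathcal A}$, one gets the claimed identification.

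Applied to $B = \Z^{\Delta^n}$, the lemma yields a natural isomorphism $(\mathcal{A}^{\Delta^n})_H \cong \mathcal{A}_H \otimes_\Z \Z^{\Delta^n} = (\mathcal{A}_H)^{\Delta^n}$. Taking the colimit over the simplex category $\Delta \downarrow X$ on both sides, the right-hand side is by definition $(\mathcal{A}_H)^X$, while for the left-hand side I would use that $(-)_H$, being itself defined as a colimit, commutes with the colimit defining $\mathcal A^X$ --- equivalently, both iterated colimits compute $\mathcal{A}_H \otimes_\Z \Z^X$ because tensor product with a fixed $R$-module preserves arbitrary colimits. This produces the desired natural isomorphism $(\mathcal{A}^X)_H \cong (\mathcal{A}_H)^X$.

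The only real subtlety is the last step: one must check that the colimit defining $\mathcal A^X$ genuinely interchanges with the colimit defining $(-)_H$. The cleanest way to handle this is to work entirely at the level of morphism sets (since the object set is $\Ob(\mathcal A_\oplus)$ throughout, independent of $X$) and invoke the fact that colimits in $R$-modules commute with each other. If one wanted to avoid abstract commutation, one could instead verify the isomorphism directly by writing both sides as matrix algebras with entries in $\Hom(a,b)_{\mathcal A}\otimes_\Z \Z^X$ and stabilising, which is essentially the same argument in more concrete dress.
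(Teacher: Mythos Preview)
Your argument is correct, and it spells out precisely what the paper leaves implicit: the paper states this proposition without proof, remarking only that it follows from ``naturality of the relevant constructions''.  Your reduction to the lemma $(\mathcal{A}\otimes_\Z B)_H \cong \mathcal{A}_H\otimes_\Z B$ and the subsequent commutation of colimits is exactly the content of that naturality, so there is nothing to compare.
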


Because of the above proposition, we do not distinguish between the
$R$-algebras $({\mathcal A}_H)^X$ and $({\mathcal A}^X)_H$, and
simplify our notation by writing ${\mathcal A}_H^X$ in both cases.

Now, let $\Simp$ denote the category of simplicial sets.  Let
$\Alg_R$ denote the category of $R$-algebroids.  Then the
contravariant functor ${\mathcal A}^-$ can be written as a covariant
functor ${\mathcal A}^-  \colon \Simp^\op \rightarrow \Alg_R$.

Let us call a directed object in the category of $\mathcal
R$-algebroids and homomorphisms a {\em directed $\mathcal
R$-algebroid}.  If we write $\Alg_R^\ind$ to denote the category of directed $R$-algebroids,
then the above functor has an extension
$${\mathcal A}^- \colon \Simp^\op \rightarrow \Alg_R^\ind$$

Given a simplicial set $X$, we can form its subdivision $\sd
(X)$; there is a natural simplicial map $h\colon \sd (X) \rightarrow
X$.  The process of repeated subdivision yields a sequence of simplicial sets, $\sd^\bullet X$:
$$\sd^0 X \stackrel{h_X}{\leftarrow} \sd^1 X \stackrel{h_{\sd (X)}}{\leftarrow} \sd^2 X \leftarrow \cdots$$

\begin{definition}
Let $\mathcal A$ and $\mathcal B$ be $R$-algebroids.  Then we define
$\HOM ({\mathcal A},{\mathcal B})$ to be the simplicial set defined
by writing
$$\HOM ({\mathcal A},{\mathcal B})[n] = \lim_{\rightarrow \atop k} \Hom ({\mathcal A}, {\mathcal B}^{\sd^k \Delta^n})$$

The face and degeneracy maps are those inherited from the simplicial
$R$-algebroid ${\mathcal B}^\Delta$.
\end{definition}

The following result will by used later on in this article to
formulate algebraic $KK$-theory in terms of spectra.  It is proven
in exactly the same way as theorem 3.3.2 in \cite{CoT}.

\begin{theorem} \label{HOM}
Let $\mathcal A$ be a simplicial $R$-algebroid, and let $\mathcal B$
be a directed $R$-algebroid.  Then
$$[{\mathcal A},{\mathcal B}^{S^n,+}] = \pi_n \HOM ({\mathcal A},{\mathcal B})$$
\noproof
\end{theorem}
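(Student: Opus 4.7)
The plan is to adapt the proof of Theorem 3.3.2 in \cite{CoT} to the algebroid setting; the structure of the argument is almost entirely formal once the appropriate fibrancy property of $\HOM(\mathcal{A}, \mathcal{B})$ is in place. The key point is that the subdivision colimit appearing in the definition of $\HOM(\mathcal{A}, \mathcal{B})$ is designed precisely to turn this simplicial set into a Kan complex, after which $\pi_n$ can be computed as pointed simplicial homotopy classes of maps out of a simplicial model of $S^n$, and these in turn unpack into algebroid data by adjunction.

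First I would verify that $\HOM(\mathcal{A}, \mathcal{B})$ is a Kan complex. Given a horn $\Lambda^n_i \to \HOM(\mathcal{A}, \mathcal{B})$, finiteness of the indexing allows us to represent it by a single algebroid homomorphism $\mathcal{A} \to \mathcal{B}^{\sd^k \Lambda^n_i}$ for some $k$. The classical combinatorial fact that there exists a simplicial retraction $\sd^m \Delta^n \to \Lambda^n_i$ for sufficiently large $m \geq k$ then induces, by contravariant functoriality of $\mathcal{B}^{-}$, a homomorphism $\mathcal{A} \to \mathcal{B}^{\sd^m \Delta^n}$ that represents an $n$-simplex of $\HOM(\mathcal{A}, \mathcal{B})$ filling the given horn.

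Next I would identify the relevant data on both sides. A class in $\pi_n \HOM(\mathcal{A}, \mathcal{B})$ based at the zero homomorphism is represented by an $n$-simplex whose boundary is the basepoint, which via the colimit definition corresponds to a homomorphism $\mathcal{A} \to \mathcal{B}^{\sd^k \Delta^n}$ whose restriction to each codimension-one face is zero, for some $k$. Such a homomorphism factors through $\mathcal{B}^{\sd^k \Delta^n / \sd^k \partial \Delta^n}$, and since this quotient is a simplicial model of $S^n$ we obtain a homomorphism $\mathcal{A} \to \mathcal{B}^{S^n, +}$. For the homotopy relation, a simplicial homotopy between two such pointed $n$-simplices is an $(n+1)$-simplex of $\HOM(\mathcal{A}, \mathcal{B})$ with the appropriate boundary behaviour, which unpacks via the smash-product identification $(\mathcal{B}^{S^n, +})^{\sd^k \Delta^1} \cong \mathcal{B}^{\sd^k \Delta^1 \wedge S^n, +}$ into a chain of elementary algebraic homotopies; conversely, every chain of elementary algebraic homotopies assembles into an $(n+1)$-simplex of this form. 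This establishes the required bijection.

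The main obstacle is the fibrancy step, which relies on the classical existence of simplicial retractions of horns after iterated barycentric subdivision; at any finite stage of the colimit one cannot fill horns, and only the passage to the colimit absorbs the requisite retractions. Everything else, including the equivalence between simplicial and algebraic homotopy, is a direct calculation using the adjunction properties of $\mathcal{B}^{-}$ and the colimit definition of $\HOM(\mathcal{A}, \mathcal{B})$, and carries over verbatim from \cite{CoT} because the arguments there nowhere use the hypothesis that $\mathcal{B}$ has only one object.
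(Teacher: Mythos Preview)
Your proposal is correct and follows precisely the approach the paper indicates: the paper does not give an independent argument but simply states that the result ``is proven in exactly the same way as theorem 3.3.2 in \cite{CoT}'', and your outline is exactly that adaptation, with the fibrancy of $\HOM(\mathcal{A},\mathcal{B})$ via subdivision retractions as the key input. There is nothing to add.
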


Of course, in the above theorem, we use $S^n$ to denote the
simplicial $n$-sphere, and we are using simplicial homotopy groups;
 see for instance \cite{GJ} or some other standard reference on simplicial homotopy
theory for further details.

\section{Path Extensions} \label{path}

Let $\mathcal A$ be an $R$-algebroid.  Let ${\mathcal A}\oplus
{\mathcal A}$ be the $R$-algebroid with the same objects as
$\mathcal A$, with morphism sets
\[ \Hom (a,b)_{{\mathcal A}\oplus {\mathcal A}} = \Hom
(a,b)_{\mathcal A}\oplus \Hom (a,b)_{\mathcal A} \]

Write $\Omega {\mathcal A} = {\mathcal A}^{S^1 ,+}$.  Then we obtain
a short exact sequence
$$0\rightarrow \Omega {\mathcal A} \rightarrow {\mathcal A}^{\Delta^1} \stackrel{(e_0,e_1)}{\rightarrow} {\mathcal A}\oplus {\mathcal A}\rightarrow 0$$
with an $F$-splitting $s\colon {\mathcal A}\oplus {\mathcal A}
\rightarrow {\mathcal A}^{\Delta^1}$ given by the formula
$$s(x,y) = (1-t) x + ty$$

If we write $P {\mathcal A} = {\mathcal A}^{\Delta^1 ,+}$, we have
a commutative diagram
$$\begin{array}{ccccc}
\Omega {\mathcal A} & \rightarrow & P{\mathcal A} & \stackrel{e_1}{\rightarrow} & {\mathcal A} \\
\| & & \downarrow & & \downarrow \\
\Omega {\mathcal A} & \rightarrow & {\mathcal A}^{\Delta^1} & \stackrel{(e_0,e_1)}{\rightarrow} & {\mathcal A}\oplus {\mathcal A} \\
& & \downarrow & & \downarrow \\
& & {\mathcal A} & = & {\mathcal A} \\
\end{array}$$
where the homomorphism ${\mathcal A}\rightarrow {\mathcal A}\oplus
{\mathcal A}$ is the inclusion of the second factor, the
homomorphism
${\mathcal A}\oplus {\mathcal A}\rightarrow {\mathcal A}$ is projection onto the first factor,
and the map
${\mathcal A}^{\Delta^1}\rightarrow {\mathcal A}$ is the evaluation map $e_0$.
Further, the complete rows and columns are short exact sequences.

The column on the right has a natural splitting $(1,0)\colon
{\mathcal A}\rightarrow {\mathcal A}\oplus {\mathcal A}$.  The
second row is the path extension, which has a natural $F$-splitting
as we observed above.  By a diagram chase, it follows that the top
row and middle column have natural $F$-splittings.

Hence, by theorem \ref{ufsplit}, there is a natural
map $\rho \colon J{\mathcal A} \rightarrow \Omega {\mathcal A}$.

\begin{definition}
Let $f\colon {\mathcal A}\rightarrow {\mathcal B}$ be a homomorphism of $R$-algebroids.  Then the {\em path algebroid} of $f$ is the unique
$R$-algebra $P{\mathcal B}\oplus_{\mathcal B}{\mathcal A}$ fitting into a commutative
diagram
$$\begin{array}{ccccc}
\Omega {\mathcal B} & \rightarrow & P{\mathcal B}\oplus_{\mathcal B}{\mathcal A} & \rightarrow & {\mathcal A} \\
\| & & \downarrow & & \downarrow \\
\Omega {\mathcal B} & \rightarrow & P{\mathcal B} & \rightarrow & {B} \\
\end{array}$$
where the bottom row is the path extension, and the upper row is a
short exact sequence.
\end{definition}

It is straightforward to check that the path algebroid is
well-defined, and that the upper row in the above diagram has a
natural $F$-splitting.  There is therefore a natural map
$\eta (f)\colon J{{\mathcal A}}\rightarrow \Omega {{\mathcal B}}$.

We can compose with the map $i\colon \Omega {\mathcal A}\rightarrow
{\mathcal A}^{S^1}$ to obtain a map $\eta (f) \colon J{\mathcal A}
\rightarrow {\mathcal B}^{S^1}$.

\begin{definition}
We call the above homomorphism $\eta (f)$ the {\em classifying map}
of the homomorphism $f$.
\end{definition}

\section{The $KK$-theory spectrum}

Consider a homomorphism $\alpha \colon {\mathcal A}\rightarrow
{\mathcal B}_H$.  Then we have a classifying map $\eta (\alpha )
\colon
J{\mathcal A}\rightarrow {\mathcal B}_H^{S^1}$.  Given a a natural
number $m\in {\mathbb N}$, we define the $R$-algebroid $J^m
{\mathcal
A}$ iteratively, by writing
$$J^0 {\mathcal A} = {\mathcal A} \qquad J^{k+1}{\mathcal A} = J(J^k
{\mathcal A})$$

Given a homomorphism $\alpha \colon J^{2n}{\mathcal A} \rightarrow
({\mathcal B}_H^{S^n})^{\sd^\bullet \Delta^n}$, we have a structure map
$$\eta (\alpha ) \colon J^{2n+1} \rightarrow ({\mathcal
B}_H^{S^{n+1}})^{\sd^\bullet \Delta^n}$$

Applying the classifying map construction again to the above, we see
that we have a simplicial map
$$\epsilon \colon \HOM (J^{2n}{\mathcal A}, {\mathcal B}_H^{S^n})
\rightarrow \HOM (J^{2n+2}{\mathcal A},{\mathcal B}_H^{S^{n+2}})
\cong \Omega \HOM (J^{2n+2}{\mathcal A},{\mathcal B}_H^{S^{n+1}})$$

\begin{definition}
We define $\KK({\mathcal A},{\mathcal B})$ to be the spectrum with
sequence of spaces $\HOM (J^{2n} {\mathcal A},{\mathcal
B}_H^{S^n})$, with structure maps defined as above.
\end{definition}

Note that, by proposition \ref{pbFAC}, elements of the space $\HOM (J^{2n} {\mathcal A},{\mathcal
B}_H^{S^n})$ are defined by homomorphisms $\alpha \colon J^{2n}
{\mathcal A} \rightarrow ({\mathcal B}_\oplus^{S^n})^{\sd^k
\Delta^n}$.

Recall from \cite{HSS} that a {\em symmetric spectrum} is a
spectrum, $\mathbb E$, equipped with actions of symmetric groups
$\Sigma_n\times E_n\rightarrow E_n$ that commute with the relevant
structure maps.  The extra structure means that the {\em smash
product}, ${\mathbb E}\wedge {\mathbb F}$ of symmetric spectra
$\mathbb E$ and $\mathbb F$ can be defined.

\begin{proposition}
The spectrum $\KK ({\mathcal A},{\mathcal B})$ is a symmetric
spectrum.
\end{proposition}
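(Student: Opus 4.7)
The plan is to equip each level space $E_n = \HOM(J^{2n}\mathcal{A}, \mathcal{B}_H^{S^n})$ with a natural action of $\Sigma_n$ and verify the equivariance conditions from \cite{HSS}. The action I would use is the obvious one arising from the target variable: viewing $S^n$ as $S^1 \wedge \cdots \wedge S^1$, the permutation action of $\Sigma_n$ on the smash factors induces, via the contravariant functor $\mathcal{B}_H^{(-)}$, an action of $\Sigma_n$ on $\mathcal{B}_H^{S^n}$ by $R$-algebra automorphisms, and hence by post-composition an action on the simplicial set $E_n$.

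Passing to the adjoint form of the structure map, $\sigma \colon E_n \wedge S^1 \to E_{n+1}$, I would check that the iterated structure map
\[ \sigma^{(k)} \colon E_n \wedge S^k \rightarrow E_{n+k} \]
is equivariant for the subgroup $\Sigma_n \times \Sigma_k \subset \Sigma_{n+k}$, where $\Sigma_k$ acts on $S^k$ by permutation of the smash factors and on $E_{n+k}$ through its image in $\Sigma_{n+k}$.  Equivariance in the $\Sigma_n$ factor is immediate from the construction: the structure map absorbs the fresh $S^1$ factors into the right-most slots of $\mathcal{B}_H^{S^{n+k}}$ while leaving the first $n$ slots untouched, so naturality of $\HOM$ in its target gives the required commutation for free.

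The main obstacle is $\Sigma_k$-equivariance, which by a standard reduction comes down to the case $k=2$: one must show that the composite
\[ E_n \wedge S^1 \wedge S^1 \stackrel{\sigma \wedge 1}{\longrightarrow} E_{n+1} \wedge S^1 \stackrel{\sigma}{\longrightarrow} E_{n+2} \]
agrees, after swapping the two source $S^1$ factors, with itself post-composed with the transposition $(n+1, n+2) \in \Sigma_{n+2}$ acting on $E_{n+2}$.  Unwinding the definition of $\sigma$, this amounts to a symmetry statement for the two-fold iterate $\eta\eta(\alpha)$ of the classifying map from Section~\ref{path}: the two successive applications of $\eta$ each introduce a new $S^1$ factor on the target, and these two factors must play symmetric roles.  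I would prove this by adapting the argument of \cite{CoT} for topological $KK$-theory, with algebraic homotopies through the simplicial $R$-algebroid $\mathcal{B}_H^\Delta$ of Section~\ref{alghom} replacing smooth homotopies in order to identify the two orderings of the classifying-map construction up to the action of the relevant transposition.
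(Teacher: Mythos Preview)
Your approach is essentially the same as the paper's: both define the $\Sigma_n$-action via permutation of the smash factors of $S^n \cong S^1 \wedge \cdots \wedge S^1$, pushed forward to $\mathcal{B}_H^{S^n}$ and hence to $\HOM(J^{2n}\mathcal{A},\mathcal{B}_H^{S^n})$ by post-composition, and then check $\Sigma_n \times \Sigma_k$-equivariance of the iterated structure map. The paper is considerably terser than you are: it simply asserts that the iterated map $\epsilon^k$ is $\Sigma_n \times \Sigma_k$-equivariant ``by construction'' and stops there, whereas you outline the reduction to the $k=2$ case and identify the symmetry of $\eta(\eta(\alpha))$ as the actual content to be verified. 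Your extra detail is reasonable and in the right spirit, though note that \cite{CoT} is the Corti\~nas--Thom paper on \emph{algebraic} bivariant $K$-theory, not topological $KK$-theory, so no ``adaptation'' from the topological setting is required---the relevant homotopies are already algebraic.
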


\begin{proof}
There is a canonical action of the permutation group $\Sigma_n$ on the
simplicial sphere $S^n \cong S^1 \wedge \cdots \wedge S^1$ defined
by permuting the order of the smash product of simplicial circles,
and therefore on the space
$\HOM (J^{2n} {\mathcal A},{\mathcal B}_\oplus^{S^n})$.

By construction, the iterated structure map $\epsilon^k \colon \HOM (J^{2n} {\mathcal
A},{\mathcal B}_\oplus^{S^n})\rightarrow \Omega \HOM (J^{2n+2k} {\mathcal A},{\mathcal
B}_\oplus^{S^{n+k}})$ is $\Sigma_n \times \Sigma_k$-equivariant, and so we have a
symmetric spectrum as required.
\end{proof}

\begin{proposition}
Let $\mathcal B$ be an $R$-algebroid, and let $k$ and $l$ be natural
numbers.  Then there is a natural homomorphism $s\colon
J^k({\mathcal B}^{S^l}) \rightarrow (J^l{\mathcal B})^{S^k}$.
\end{proposition}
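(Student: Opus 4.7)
The plan is to construct $s$ as a composition: first a ``commute'' map $c_{k,l}\colon J^k(\mathcal{B}^{S^l})\to (J^k\mathcal{B})^{S^l}$ produced by Theorem~\ref{ufsplit}, then an ``exchange'' map $(J^k\mathcal{B})^{S^l}\to (J^l\mathcal{B})^{S^k}$ produced (when $k\ge l$) by the iterated path classifying map $\rho\colon J\mathcal{A}\to \mathcal{A}^{S^1}$ of Section~\ref{path}. The two steps together trade $k-l$ of the inner $J$'s for extra $S^1$ factors on the outside, reversing the imbalance between $J$-count and $S$-count.

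For $c_{k,l}$, I would first argue that applying $(-)^{S^l}$ to the universal extension $0\to J\mathcal{A}\to T\mathcal{A}\to \mathcal{A}\to 0$ preserves both exactness and the $F$-splitting $\sigma$: the splitting is only an $R$-moduloid map, and $(-)^{S^l}$ is a filtered colimit of tensor products with the polynomial rings $\Z^{\Delta^n}$, which are flat over $\Z$ and send $R$-moduloid maps to $R$-moduloid maps. Theorem~\ref{ufsplit} then supplies a natural classifying map $J(\mathcal{A}^{S^l})\to (J\mathcal{A})^{S^l}$; iterating with $\mathcal{A}=\mathcal{B},J\mathcal{B},\ldots,J^{k-1}\mathcal{B}$ and composing produces $c_{k,l}$.

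For the exchange step, assuming $k\ge l$, let $\rho^{k-l}\colon J^{k-l}(J^l\mathcal{B})\to (J^l\mathcal{B})^{S^{k-l}}$ be the $(k-l)$-fold iterate of $\rho$. Applying $(-)^{S^l}$ and using the natural identification $(\mathcal{C}^{S^{k-l}})^{S^l}\cong \mathcal{C}^{S^{k-l}\wedge S^l}=\mathcal{C}^{S^k}$ from Section~\ref{alghom} gives
\[
(J^k\mathcal{B})^{S^l}=(J^{k-l}(J^l\mathcal{B}))^{S^l}\longrightarrow ((J^l\mathcal{B})^{S^{k-l}})^{S^l}\cong (J^l\mathcal{B})^{S^k}.
\]
Precomposing with $c_{k,l}$ yields $s$ in the range $k\ge l$.

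The main obstacle is the range $l>k$, where the recipe above breaks: reaching $(J^l\mathcal{B})^{S^k}$ from $(J^k\mathcal{B})^{S^l}$ would require a natural map of the form $\mathcal{C}^{S^m}\to J^m\mathcal{C}$, and there is no analogue of $\rho$ producing one. In that case I would attempt to decompose $\mathcal{B}^{S^l}=(\mathcal{B}^{S^{l-k}})^{S^k}$ in the source, apply $c_{k,k}$ with $\mathcal{B}^{S^{l-k}}$ in place of $\mathcal{B}$ to reduce to the balanced case, and iterate; if this still fails I would fall back on the observation that the uses of the proposition in the $KK$-spectrum construction all have $k=2n$ and $l=n$, so $k\ge l$ always holds in context. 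Naturality of $s$ in $\mathcal{B}$ is automatic from the naturality of $\rho$, of Theorem~\ref{ufsplit}, and of the functors $(-)^{S^n}$.
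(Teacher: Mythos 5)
Your first step is precisely the paper's proof: the paper takes the $F$-split exact sequence $0 \rightarrow (J{\mathcal A})^{S^1} \rightarrow (T{\mathcal A})^{S^1} \rightarrow {\mathcal A}^{S^1} \rightarrow 0$, observes that its classifying map (theorem \ref{ufsplit}) is a natural homomorphism $J({\mathcal A}^{S^1})\rightarrow (J{\mathcal A})^{S^1}$, and defines $s$ by iterating this in both the $J$-direction and the $S^1$-direction. That is exactly your map $c_{k,l}$, and your justification that the sphere functor preserves $F$-split exactness is in the same spirit as proposition \ref{jtens}. What the paper does \emph{not} do is your second, ``exchange'' step: the indices in the statement as printed are evidently transposed, and the map the paper actually constructs and uses is $s\colon J^k({\mathcal B}^{S^l})\rightarrow (J^k{\mathcal B})^{S^l}$. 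This is confirmed by definition \ref{Shprodalg}, where $s$ appears as $J^{2n}({\mathcal B}^{S^m}_\oplus)\rightarrow (J^{2n}{\mathcal B}_\oplus)^{S^m}$ so that $\beta_\oplus$, whose domain involves $J^{2n}{\mathcal B}$, can be applied afterwards; your composite landing in $(J^l{\mathcal B})^{S^k}$ would not compose with $\beta$ there at all.

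So the difficulty you ran into for $l>k$ is an artifact of taking the misprinted target literally rather than a gap in your argument: for the literal statement you are right that there is no natural way to trade spheres back into $J$'s (no analogue of $\rho$ in the direction ${\mathcal C}^{S^m}\rightarrow J^m{\mathcal C}$), and your fallback observation that the spectrum-level applications only need the ``balanced'' commuting map is essentially the correct diagnosis. In short: drop the exchange step, keep $c_{k,l}$, and you have reproduced the paper's proof of the result that is actually needed.
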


\begin{proof}
The classifying map of the $F$-split exact sequence
$$0 \rightarrow (J{\mathcal A})^{S^1} \rightarrow (T {\mathcal A})^{S^1} \rightarrow {\mathcal A}^{S^1} \rightarrow 0$$
is a natural homomorphism $J({\mathcal A}^{S^1})\rightarrow
(J{\mathcal A})^{S^1}$.  The homomorphism $s$ is defined by
iterating the above construction.
\end{proof}

\begin{definition} \label{Shprodalg}
Let $\mathcal A$, $\mathcal B$, and $\mathcal C$ be $R$-algebroids.
Consider homomorphisms $\alpha \colon J^{2m} {\mathcal A}\rightarrow
{\mathcal B}^{S^m}_\oplus$ and $\beta \colon J^{2n} {\mathcal
B}\rightarrow {\mathcal C}^{S^n}_\oplus $.  Then we define the
product $\alpha \sharp \beta$ to be the composition
\[
J^{2m+2n}{\mathcal A}\stackrel{J^{2n} \alpha}{\rightarrow}
J^{2n}({\mathcal B}^{S^m}_\oplus ) \stackrel{s}{\rightarrow}
(J^{2n}{\mathcal B}_\oplus )^{S^m}
\stackrel{\beta_\oplus}{\rightarrow} {\mathcal C}_\oplus^{S^{m+n}}
\]
\end{definition}

We can of course also define the above in the case of directed
$R$-algebroids.  This extension of the above definition is needed in
the following theorem.

\begin{theorem} \label{KKprodalg}
Let $\mathcal A$, $\mathcal B$, and $\mathcal C$ be $R$-algebroids.
Then there is a natural map of spectra
\[
\KK({\mathcal A},{\mathcal B}) \wedge \KK({\mathcal B},{\mathcal C})
\rightarrow \KK({\mathcal A},{\mathcal C})
\]
defined by the formula
\[
\alpha \colon
J^{2m} {\mathcal A}\rightarrow ({\mathcal B}^{S^m}_\oplus)^{\sd^\bullet
\Delta^k}
\qquad
\alpha \colon
J^{2n} {\mathcal B}\rightarrow ({\mathcal C}^{S^n}_\oplus)^{\sd^\bullet
\Delta^l}
\]

Further, the above product is associative in the obvious sense.
Given homomorphisms $\alpha \colon {\mathcal A}\rightarrow {\mathcal
B}$ and $\beta \colon {\mathcal B}\rightarrow {\mathcal C}$, we have
the formula $\alpha \sharp \beta = \beta \circ \alpha$.
\end{theorem}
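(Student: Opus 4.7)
The plan is to assemble the product pairing from Definition \ref{Shprodalg} into a map of symmetric spectra and then check associativity and the homomorphism formula. First I would extend the product $\sharp$ to take $\sd^\bullet\Delta^k$-homomorphisms as input. Given $\alpha\colon J^{2m}{\mathcal A}\rightarrow ({\mathcal B}^{S^m}_\oplus)^{\sd^\bullet \Delta^k}$ and $\beta\colon J^{2n}{\mathcal B}\rightarrow ({\mathcal C}^{S^n}_\oplus)^{\sd^\bullet \Delta^l}$, I form the composite
\[
J^{2m+2n}{\mathcal A}\stackrel{J^{2n}\alpha}{\rightarrow}
\bigl(J^{2n}({\mathcal B}^{S^m}_\oplus)\bigr)^{\sd^\bullet \Delta^k}
\stackrel{s}{\rightarrow}
\bigl((J^{2n}{\mathcal B}_\oplus)^{S^m}\bigr)^{\sd^\bullet \Delta^k}
\stackrel{\beta_\oplus^{S^m}}{\rightarrow}
\bigl({\mathcal C}_\oplus^{S^{m+n}}\bigr)^{\sd^\bullet (\Delta^k\wedge \Delta^l)}
\]
using the natural map $s$ from the previous proposition applied in each simplicial degree, and regarding the target as an element of $\HOM(J^{2(m+n)}{\mathcal A},{\mathcal C}_H^{S^{m+n}})$ at the bisimplicial level $(k,l)$. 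Naturality of $J$, of $s$, and of $(-)^X$ ensures this is a simplicial map in each of $k$ and $l$, so we get a pairing of the underlying bisimplicial sets and, after taking diagonals, of simplicial sets.

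Next I would verify compatibility with the structure maps $\epsilon$ of the two spectra and with the symmetric group actions, so that the pairing descends to a map of symmetric spectra. The key point is that the classifying map $\eta$ is natural in $F$-split extensions, so
$\epsilon(\alpha\sharp\beta)$ can be computed either by applying $\eta$ once more to $\alpha\sharp\beta$ in the $\mathcal A$-variable or by first computing $\epsilon(\alpha)$ and then taking the product with $\beta$; both compositions agree because the classifying map of a product of $F$-split sequences equals the product of their classifying maps. The $\Sigma_m\times\Sigma_n$-equivariance of $s$ — coming from the fact that $s$ is built from the classifying map of $0\to (J{\mathcal A})^{S^1}\to (T{\mathcal A})^{S^1}\to {\mathcal A}^{S^1}\to 0$ iterated in a block-wise manner — gives the required compatibility with the symmetric group actions appearing in the smash product of symmetric spectra.

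For associativity, given a third homomorphism $\gamma\colon J^{2p}{\mathcal C}\rightarrow {\mathcal D}^{S^p}_\oplus$, both iterated products $(\alpha\sharp\beta)\sharp\gamma$ and $\alpha\sharp(\beta\sharp\gamma)$ unfold to the same composite running through $J^{2(m+n+p)}{\mathcal A}\to J^{2(n+p)}({\mathcal B}^{S^m}_\oplus)\to (J^{2p}({\mathcal C}^{S^m}_\oplus))^{S^n}\to {\mathcal D}^{S^{m+n+p}}_\oplus$, because the maps $s$ satisfy a cocycle-type identity that I would extract from iterating the proposition above. The formula $\alpha\sharp\beta=\beta\circ\alpha$ for honest algebroid homomorphisms is the special case $m=n=0$: then $J^{2n}\alpha=\alpha$, the map $s$ is the identity, and $\beta_\oplus$ restricted to $\mathcal B$ followed by the canonical map to $\mathcal C_\oplus$ is just $\beta$, giving $\beta\circ\alpha$ after composing with the inclusion into $\mathcal C_H$.

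The main obstacle I expect is the bookkeeping in the structure-map compatibility step: one must keep track of where the loop coordinates from the $S^m$, $S^n$ factors and the subdivision coordinates from the $\sd^\bullet\Delta^k$, $\sd^\bullet\Delta^l$ factors sit relative to the iterated $J$'s, and ensure that the shuffle map $s$ commutes (on the nose, not merely up to homotopy) with the $\eta$-construction used to define $\epsilon$. Once this naturality square is established, assembling the spectrum-level pairing, associativity, and the homomorphism formula are essentially formal consequences of the universal property of the extensions $0\to J{\mathcal A}\to T{\mathcal A}\to {\mathcal A}\to 0$.
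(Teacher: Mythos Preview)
Your proposal is correct and follows essentially the same approach as the paper. The paper, like you, reduces to the undecorated case $\alpha\colon J^{2m}{\mathcal A}\to {\mathcal B}^{S^m}_\oplus$, $\beta\colon J^{2n}{\mathcal B}\to {\mathcal C}^{S^n}_\oplus$, verifies structure-map compatibility by exhibiting the single commutative square expressing that $\eta^2(\alpha)\sharp\beta$ agrees with $\alpha\sharp\eta^2(\beta)$ via naturality of the classifying map and of $s$, and proves associativity by writing the two iterated products as the two columns of a single ladder diagram whose rungs are instances of $s$; your identification of the $s$--$\eta$ compatibility as the one nontrivial verification is exactly the point the paper's diagram encodes.
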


\begin{proof}
For convenience, we will simply consider homomorphisms of the form
$\alpha \colon J^{2m} {\mathcal A}\rightarrow {\mathcal B}^{S^m}_\oplus$
and $\beta \colon J^{2n} {\mathcal B}\rightarrow {\mathcal
C}^{S^n}_\oplus$.

Our construction gives us a natural continuous $S_m \times
S_n$-equivariant map $\HOM (J^m{\mathcal A},{\mathcal B}_H^{S^m})
\wedge \HOM (J^n {\mathcal B}, {\mathcal C}_H^{S^n}) \rightarrow
\HOM (J^{m+n}{\mathcal A},{\mathcal C}_\oplus^{S^{m+n}})$. Compatibility
with the structure maps follows since naturality of the classifying
map construction gives us a commutative diagram
\[
\begin{array}{ccccccc}
J^{2m+2n+2}{\mathcal A} & \stackrel{J^{2n+2}(\alpha )}{\rightarrow}
& J^{2n+2} ({\mathcal B}^{S^m}_\oplus )
& \stackrel{s}{\rightarrow} & (J^{2n+2} {\mathcal B})^{S^n}_\oplus & \stackrel{\eta^2 (\beta_\oplus )}{\rightarrow} & \Sigma^{m+n+2} {\mathcal C}_\oplus^{S^{m+n+2}} \\
\| & & \downarrow & & \downarrow & & \| \\
J^{2m+2n+2}{\mathcal A} & \stackrel{J^{2n} \eta^2(\alpha )}{\rightarrow} & J^{2n}({\mathcal B}^{S^{m+2}}_\oplus ) & \stackrel{s}{\rightarrow} & (J^{2n} {\mathcal B})^{S^{m+2}}_\oplus & \stackrel{\beta_\oplus }{\rightarrow} & {\mathcal C}^{S^{m+n+2}}_\oplus \\
\end{array}
\]

We now need to check the statement concerning associativity.
Consider homomorphisms
\[
\alpha \colon J^{2m}{\mathcal A}\rightarrow {\mathcal
B}^{S^m}_\oplus \qquad \beta \colon J^{2n}{\mathcal B}\rightarrow
{\mathcal C}^{S^n}_\oplus \qquad \gamma \colon J^{2p}{\mathcal
C}\rightarrow {\mathcal D}^{S^p}_\oplus
\]

Then we have a commutative diagram
\[
\begin{array}{ccc}
J^{2m+2n+2p}{\mathcal A} & = & J^{2m+2n+2p}{\mathcal A} \\
\downarrow & & \downarrow \\
J^{2n+2p}({\mathcal B}^{S^m}_\oplus ) & = & J^{2n+2p}({\mathcal B}^{S^m}_\oplus) \\
\downarrow & & \downarrow \\
J^{2p}(J^{2n}{\mathcal B}^{S^m}_\oplus ) & \stackrel{s}{\rightarrow} & (J^{2n+2p} {\mathcal B})^{S^m})_\oplus \\
\downarrow & & \downarrow \\
J^{2p} ({\mathcal C}^{S^{m+n}}_\oplus ) & \stackrel{s}{\rightarrow}
& J^{2p} (({\mathcal C}^{S^m})^{S^n}_\oplus ) \\
\downarrow & & \downarrow \\
(J^{2p} {\mathcal C})^{S^{m+n}}_\oplus  & = & J^{2p}
({\mathcal C}^{S^{m+n}}_\oplus ) \\
\downarrow & & \downarrow \\
{\mathcal D}^{S^{m+n+p}}_\oplus & = & \Sigma^{m+n+p} {\mathcal D}^{S^{m+n+p}}_\oplus \\
\end{array}
\]

But the column on the left is the product $(\alpha \sharp \beta
)\sharp \gamma$ and the column on the right is the product $\alpha
\sharp (\beta \sharp \gamma)$ so associativity of the product
follows.
\end{proof}

\begin{proposition} \label{dmapalg}
Let $\mathcal A$, $\mathcal B$, and $\mathcal C$ be $R$-algebroids.
Then there is a map $\Delta \colon \KK ({\mathcal A},{\mathcal
B})\rightarrow \KK ({\mathcal A}\otimes_R {\mathcal C},{\mathcal
B}\otimes_R {\mathcal C})$.  This map is compatible with the product
in the sense that we have a commutative diagram
\[
\begin{array}{ccc}
\KK ({\mathcal A},{\mathcal B})\wedge \KK ({\mathcal B},{\mathcal C}) & \rightarrow & \KK ({\mathcal A},{\mathcal C}) \\
\downarrow & & \downarrow \\
\KK ({\mathcal A}\otimes_R {\mathcal D},{\mathcal B}\otimes_R {\mathcal D})\wedge \KK ({\mathcal B}\otimes_R {\mathcal D},{\mathcal C}\otimes_R {\mathcal D}) & \rightarrow & \KK ({\mathcal A}\otimes_R {\mathcal D},{\mathcal C}\otimes_R {\mathcal D}) \\
\end{array}
\]
where the horizontal maps are defined by the product, and the
vertical maps are copies of the map $\Delta$.
\end{proposition}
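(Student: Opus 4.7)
The plan is to construct $\Delta$ at the level of the spaces making up the spectrum, using Proposition \ref{jtens} as the main input, and then deduce both the spectrum structure compatibility and the multiplicative compatibility from the naturality of all constructions involved.

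First I would build, for each $n$, the map on the $n$-th space of the spectrum. Given a representative $\alpha \colon J^{2n}\mathcal{A}\to (\mathcal{B}_\oplus^{S^n})^{\sd^k\Delta^n}$ of a point in $\HOM(J^{2n}\mathcal{A},\mathcal{B}_H^{S^n})$, I would produce a homomorphism
\[
\Delta(\alpha)\colon J^{2n}(\mathcal{A}\otimes_R\mathcal{C}) \longrightarrow \bigl(((\mathcal{B}\otimes_R\mathcal{C})_\oplus)^{S^n}\bigr)^{\sd^k\Delta^n}
\]
as the composition of three pieces: (i) the iterate $J^{2n}(\mathcal{A}\otimes_R\mathcal{C})\to (J^{2n}\mathcal{A})\otimes_R\mathcal{C}$ of the classifying map of the $F$-split exact sequence in Proposition \ref{jtens}, obtained by repeatedly applying Theorem \ref{ufsplit}; (ii) the homomorphism $\alpha\otimes 1_\mathcal{C}$; (iii) the canonical natural maps that commute $\otimes_R\mathcal{C}$ past $(-)_\oplus$ (via entrywise tensor of matrices) and past $(-)^{S^n}$ and $(-)^{\sd^k\Delta^n}$ (using $\mathcal{B}^X\otimes_R\mathcal{C}\to(\mathcal{B}\otimes_R\mathcal{C})^X$, which is forced by distributivity of $\otimes_\Z\Z^\Delta$ over $\otimes_R\mathcal{C}$ and colimits). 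The $F$-splittings being themselves natural shows that the iterated classifying map is natural, and so $\Delta$ passes to the filtered colimit in $k$ defining the simplicial set $\HOM$.

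Next I would verify that the maps at each level assemble into a map of spectra. This amounts to checking that $\Delta$ commutes with the structure maps $\epsilon$ of the $KK$-spectrum. Since $\epsilon$ is itself built from the classifying map construction, and since the classifying map of the $F$-split sequence $0\to J\mathcal{A}\otimes_R\mathcal{C}\to T\mathcal{A}\otimes_R\mathcal{C}\to\mathcal{A}\otimes_R\mathcal{C}\to 0$ is natural in $\mathcal{A}$, one gets a commutative square relating the two instances of $\epsilon$ (the one for $\mathcal{A}\rightsquigarrow\mathcal{B}$ and the one for $\mathcal{A}\otimes\mathcal{C}\rightsquigarrow\mathcal{B}\otimes\mathcal{C}$), and similarly for the symmetric group actions on $S^n$.

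Finally, for the compatibility diagram with the product of Theorem \ref{KKprodalg}, I would trace through the three-step composite defining $\alpha\sharp\beta$ in Definition \ref{Shprodalg} and apply $\Delta$ stepwise. Each of the three ingredients ($J^{2n}$ applied to $\alpha$, the switch $s$ from the previous proposition, and the postcomposition by $\beta_\oplus$) is natural in the data, and in particular is compatible with tensoring by $\mathcal{D}$. Thus $\Delta(\alpha\sharp\beta)=\Delta(\alpha)\sharp\Delta(\beta)$ follows by stacking these three naturality squares. The main obstacle, in my view, is bookkeeping rather than mathematical content: making sure that the order in which one commutes $\otimes_R\mathcal{C}$ past $J$, $(-)_\oplus$, $(-)^{S^n}$ and $(-)^{\sd^\bullet\Delta^l}$ is consistent throughout, so that the product compatibility square commutes on the nose and not merely up to some coherence isomorphism that would need to be absorbed into the definition of $\Delta$.
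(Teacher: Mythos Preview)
Your proposal is correct and follows essentially the same route as the paper: the paper defines $\Delta(\alpha)=\beta\circ(\alpha\otimes 1)\circ\gamma^{2n}$, where $\gamma$ is exactly your iterated classifying map from Proposition~\ref{jtens} and $\beta$ is your canonical map commuting $\otimes_R\mathcal{C}$ past $(-)_\oplus$ and $(-)^{S^n}$, and then dismisses the remaining checks with ``the relevant naturality properties are easy to check.'' Your write-up is more careful about those checks (spectrum structure, symmetric group action, and the three-square decomposition of product compatibility), but the underlying construction is identical.
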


\begin{proof}
Let $\alpha \colon J^{2n}{\mathcal A} \rightarrow {\mathcal
B}^{S^n}_\oplus$ be a homomorphism.  Then we have a naturally
induced homomorphism $\alpha \otimes 1 \colon (J^{2n}{\mathcal
A})\otimes_R {\mathcal C}\rightarrow {\mathcal B}^{S^n}_\oplus
\otimes_R {\mathcal C})$.

There is an obvious natural homomorphism $\beta \colon {\mathcal
B}^{S^n}_\oplus \otimes_R {\mathcal C} \rightarrow ({\mathcal
B}\otimes_R {\mathcal C})^{S^n}_\oplus$.  By proposition
\ref{jtens}, we have an $F$-split short exact sequence
$$0 \rightarrow (J{\mathcal A})\otimes_R {\mathcal C} \rightarrow (T{\mathcal A})\otimes_R {\mathcal C} \rightarrow {\mathcal A}\otimes_R {\mathcal C} \rightarrow 0$$

We thus obtain a natural homomorphism $\gamma \colon J({\mathcal
A}\otimes_R {\mathcal C})\rightarrow (J{\mathcal A})\otimes_R
{\mathcal C}$ as the classifying map of the diagram
\[
\begin{array}{ccccccccc}
& & & & & & {\mathcal A}\otimes {\mathcal C} \\
& & & & & & \| \\
0 & \rightarrow & (J{\mathcal A})\otimes {\mathcal C} & \rightarrow & (T{\mathcal A})\otimes {\mathcal C} & \rightarrow & {\mathcal A}\otimes {\mathcal C} & \rightarrow & 0 \\
\end{array}
\]

We now define the map $\Delta$ by writing $\Delta (\alpha ) = \beta
\circ (\alpha \otimes 1)\circ \gamma^n$.  The relevant naturality
properties are easy to check.
\end{proof}

The following result follows directly from the relevant definitions
in the category of symmetric spectra (see \cite{HSS}) along with the
above proposition and theorem.

\begin{theorem}
Let $\mathcal A$ be an $R$-algebroid.  Then the spectrum $\KK
({\mathcal A},{\mathcal A})$ is a symmetric ring spectrum.

Let $B$ be another $R$-algebroid.  Then the spectrum $\KK ({\mathcal
A},{\mathcal B})$ is a symmetric $\KK (R ,R )$-module spectrum.
\noproof
\end{theorem}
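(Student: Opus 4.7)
The plan is to assemble the ring spectrum and module spectrum structures directly from the product map of Theorem~\ref{KKprodalg} and the change-of-tensor-factor map of Proposition~\ref{dmapalg}; the author's remark that the result ``follows directly from the relevant definitions'' signals that no new construction is needed, but that the axioms in \cite{HSS} must be checked.

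First I would define the ring structure on $\KK({\mathcal A},{\mathcal A})$. The multiplication $\mu$ is just the product map of Theorem~\ref{KKprodalg} specialised to ${\mathcal A}={\mathcal B}={\mathcal C}$. The unit $\iota\colon S\rightarrow \KK({\mathcal A},{\mathcal A})$ is defined at spectrum level $0$ by sending the non-basepoint of $S^0$ to the class of the identity homomorphism $1_{\mathcal A}$, viewed as a $0$-simplex of $\HOM({\mathcal A},{\mathcal A}_H)$; the higher levels are obtained from iterating the structure map $\epsilon$, and $\Sigma_n$-equivariance comes for free because the identity is fixed by all permutations. Associativity of $\mu$ is the last assertion of Theorem~\ref{KKprodalg}. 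Left and right unitality reduce, by the formula $\alpha\sharp\beta=\beta\circ\alpha$ from the same theorem, to the identities $1_{\mathcal A}\circ\alpha=\alpha=\alpha\circ 1_{\mathcal A}$.

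Second, for the module structure I would build the action
\[
\rho\colon \KK(R,R)\wedge \KK({\mathcal A},{\mathcal B}) \rightarrow \KK({\mathcal A},{\mathcal B})
\]
as the composite $\sharp\circ(\Delta\wedge 1)$, where $\Delta$ is the map of Proposition~\ref{dmapalg} taken with ${\mathcal C}={\mathcal A}$ (so that $R\otimes_R {\mathcal A}\cong {\mathcal A}$), landing in $\KK({\mathcal A},{\mathcal A})$, and then $\sharp$ is the product map. The compatibility diagram in Proposition~\ref{dmapalg}, together with the associativity clause of Theorem~\ref{KKprodalg}, immediately yields the action axiom $\rho\circ(\mu_{\KK(R,R)}\wedge 1)=\rho\circ(1\wedge \rho)$. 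The unit axiom $\rho\circ(\iota_{R}\wedge 1)=\mathrm{id}$ follows because $\Delta(1_R)$ is the identity on $R\otimes_R {\mathcal A}\cong {\mathcal A}$ under the canonical isomorphism, combined with the unitality already checked for $\sharp$.

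The main obstacle is bookkeeping of symmetric-group actions: to get an honest symmetric ring (resp.\ module) spectrum, one must check that $\mu$ and $\rho$ are $\Sigma_m\times\Sigma_n$-equivariant on bidegree $(m,n)$, and that the unit at level $n$ is $\Sigma_n$-invariant. The equivariance of $\sharp$ is already embedded in the proof of Theorem~\ref{KKprodalg}, and the $\Delta$ map of Proposition~\ref{dmapalg} is constructed to commute with the structure maps (it tensors with a fixed ${\mathcal D}$ and so does not affect the sphere coordinates), so the verification reduces to unwinding the definitions. Once this bookkeeping is done, the ring and module axioms from \cite{HSS} are met and the theorem is established.
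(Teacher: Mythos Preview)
Your proposal is correct and follows exactly the approach the paper indicates: the theorem is stated without proof, the preceding sentence declaring it ``follows directly from the relevant definitions in the category of symmetric spectra (see \cite{HSS}) along with the above proposition and theorem,'' i.e.\ precisely Theorem~\ref{KKprodalg} and Proposition~\ref{dmapalg}. Your unpacking of the ring multiplication via the product, the unit via $1_{\mathcal A}$, and the module action via $\Delta$ (with ${\mathcal C}={\mathcal A}$ so that $R\otimes_R{\mathcal A}\cong{\mathcal A}$) followed by $\sharp$ is exactly the intended argument, and your verification of associativity and unitality from the compatibility diagram and the formula $\alpha\sharp\beta=\beta\circ\alpha$ is the right bookkeeping.
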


\section{$KK$-theory groups} \label{KKg}

Note that, by theorem \ref{HOM},
$$\pi_0 \KK ({\mathcal A},{\mathcal B}) = \lim_{\rightarrow \atop n} [J^{2n}{\mathcal A}, {\mathcal B}_H^{S^{2n}}]$$
and when $B$ is an $R$-algebra, we have a natural isomorphism
${B}_H\cong B\otimes_R M_\infty (R)$, where $M_\infty
(R)$ denotes the $R$-algebra of infinite matrices, indexed by
$\mathbb N$, with entries in the ring $R$.

Thus, if we define
$$KK ({\mathcal A},{\mathcal B}) = \pi_0 \KK ({\mathcal A},{\mathcal
B})$$
then in the case where $\mathcal A$ and $\mathcal B$ are
$R$-algebras, we recover the definition of bivariant algebraic $KK$-theory in
\cite{CoT}.

\begin{definition}
Let $\mathcal A$ and $\mathcal B$ be $R$-algebroids.  Then we can
define the $KK$-theory groups
$$KK_p ({\mathcal A},{\mathcal B}) = \pi_p \KK ({\mathcal A},{\mathcal B})$$
\end{definition}

By theorem \ref{HOM}, we have natural isomorphisms
$$[{\mathcal A},{\mathcal B}^{S^n,+}] \cong \pi_n \HOM ({\mathcal A},{\mathcal B})$$

Hence, by definition of the $KK$-theory spectrum, we know that
$$KK_p ({\mathcal A},{\mathcal B}) \cong \lim_{\rightarrow \atop n} [J^{n}{\mathcal A}, {\mathcal B}_\oplus^{S^{n+p}}]$$

In section \ref{path}, that we defined an $R$-algebroid $\Omega
{\mathcal B} = {\mathcal B}^{S^1 ,+}$.  Iterating this construction,
we see that $\Omega^n {\mathcal B} = {\mathcal B}^{S^n , +}$.
Hence, when $p\geq 0$ we can write
$$KK_p ({\mathcal A},{\mathcal B}) = KK_0 ({\mathcal A},\Omega^p {\mathcal B})$$

Similarly, we can write
$$KK_{-p} ({\mathcal A},{\mathcal B}) = KK_0 (J^p{\mathcal A},{\mathcal B})$$

By definition, elements of the group $KK_0 ({\mathcal A},{\mathcal
B})$ arise from homomorphisms $\alpha \colon J^{2n} {\mathcal
A}\rightarrow {\mathcal B}^{S^n}_\oplus$.  Given two such
homomorphisms $\alpha , \beta \colon J^{2n} {\mathcal A}\rightarrow
{\mathcal B}^{S^n}_\oplus$, we can define the sum, $\alpha \oplus
\beta \colon J^{2n} {\mathcal A}\rightarrow {\mathcal
B}^{S^n}_\oplus$, by writing
$$(\alpha \oplus \beta )(a) = \alpha (a)\oplus \beta (a)$$
for each object $a\in \Ob (J^n{\mathcal A})$, and
$$(\alpha \oplus \beta )(x) = \left( \begin{array}{cc}
\alpha (x) & 0 \\
0 & \beta (x)
\end{array} \right)$$
for each morphism $x$.

The following result is obvious from the construction of
$KK$-theory.

\begin{proposition}
Let $\alpha \colon J^{2n} {\mathcal A}\rightarrow {\mathcal
B}^{S^n}_\oplus$ be a homomorphism.  Let $[\alpha ]$ be the
equivalence class defined by the following conditions.

\begin{itemize}

\item Let $\alpha ,\beta \colon J^{2n} {\mathcal A}\rightarrow {\mathcal B}^{S^n}_\oplus$ be algebraically homotopic.  Then $[\alpha ] = [\beta ]$.

\item Let $\alpha \colon J^{2n} {\mathcal A}\rightarrow {\mathcal B}^{S^n}_\oplus$ be a homomorphism, and let $0\colon {\mathcal A}\rightarrow {\mathcal B}^{S^n}_\oplus$ be the homomorphism that is zero for each morphism in the category $J^n \mathcal A$.  Then $[\alpha ] = [\alpha \oplus 0 ]$.

\item Let $\alpha \colon J^{2n} {\mathcal A}\rightarrow {\mathcal B}^{S^n}_\oplus$, and let $\eta (\alpha )\colon J^{2n+2}{\mathcal A}\rightarrow {\mathcal B}^{S^{n+1}}_\oplus$ be the corresponding classifying map.  Then $[\eta (\alpha )] = [\alpha ]$.

\end{itemize}

Then the group $KK_0 ({\mathcal A},{\mathcal B})$ is the set of
equivalence classes of homomorphisms $\alpha \colon J^n{\mathcal
A}\rightarrow {\mathcal B}^{S^n}_\oplus$.  The group operation is
defined by the formula $[\alpha \oplus \beta ] = [\alpha ]+[\beta
]$. \noproof
\end{proposition}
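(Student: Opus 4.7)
The plan is to translate the definition $KK_0({\mathcal A},{\mathcal B}) = \pi_0 \KK({\mathcal A},{\mathcal B})$ into the language of homotopy classes of homomorphisms and then match the three listed relations one by one.

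First, by Theorem \ref{HOM} applied in degree zero, $\pi_0 \HOM(J^{2n}{\mathcal A}, {\mathcal B}_H^{S^n}) = [J^{2n}{\mathcal A}, {\mathcal B}_H^{S^n}]$. Passing to the colimit along the spectrum structure maps $\epsilon$, and using Proposition \ref{pbFAC} (every homomorphism into ${\mathcal B}_H^{S^n}$ factors through the additive completion ${\mathcal B}_\oplus^{S^n}$), we obtain
\[
KK_0({\mathcal A},{\mathcal B}) \;=\; \colim_n \; [J^{2n}{\mathcal A}, {\mathcal B}_\oplus^{S^n}].
\]

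Next I would match the three conditions to the equivalences defining this colimit. Condition (i) is built into the notation $[-,-]$ of algebraic homotopy classes. Condition (iii) is the statement that $\alpha$ and its image $\eta(\alpha)$ under the spectrum structure map become equal, which is the defining relation of a filtered colimit. Condition (ii) reflects the construction of ${\mathcal B}_H$ in Definition \ref{Hfunctor}: the colimit defining ${\mathcal B}_H$ uses the pad-with-zero embeddings, so that $\alpha$ and $\alpha \oplus 0$ are identified after composing with the canonical map ${\mathcal B}_\oplus \to {\mathcal B}_H$. A routine check then shows that these three relations are both necessary and sufficient for two representatives to define the same element of the colimit.

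It remains to identify the group operation. Since $\KK({\mathcal A},{\mathcal B})$ is a symmetric spectrum, $\pi_0$ carries a natural abelian group structure coming from the cogroup structure of $S^n$ under the colimit over $n$. The main obstacle, and the only step demanding actual calculation, is to verify that this operation is represented by $\alpha \oplus \beta$. This is done via an algebraic rotation homotopy: on $2\times 2$ block matrices over ${\mathcal B}_\oplus^{S^n}$ one writes down an explicit polynomial homotopy showing that $\mathrm{diag}(\alpha,\beta)$ agrees, after one application of the structure map, with the pinch-map representative of $[\alpha]+[\beta]$. This is the algebraic analogue of the classical $\mathrm{SO}(2)$-rotation, and the argument from section~4 of \cite{CoT} adapts verbatim to the algebroid setting because additive completion, elementary homotopy, and the classifying-map construction are all defined object-wise.
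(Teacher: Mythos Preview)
The paper does not actually prove this proposition: it is marked with \texttt{\textbackslash noproof} and introduced by ``The following result is obvious from the construction of $KK$-theory.''  Your proposal is precisely the unwinding of that construction, and it is correct in substance.  The identification of relation~(i) with the algebraic-homotopy quotient, relation~(ii) with the pad-by-zero identifications built into ${\mathcal B}_H$, and relation~(iii) with the colimit transition maps is exactly what ``obvious from the construction'' means here.

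One small bookkeeping remark.  The paper's displayed formula just above the proposition is
\[
KK_p({\mathcal A},{\mathcal B}) \;\cong\; \lim_{\rightarrow \atop n}\,[J^{n}{\mathcal A},\,{\mathcal B}_\oplus^{S^{n+p}}],
\]
indexed by \emph{single} applications of $\eta$, so that relation~(iii) with one $\eta$ is literally the transition map of the colimit.  Your displayed colimit uses $J^{2n}$ against $S^n$, which matches the spectrum's spaces but whose structure maps are $\eta^2$; to recover~(iii) in that indexing you would need to interpolate one extra step.  This is harmless and the paper itself wobbles between the two indexings in the statement.  Your treatment of the group law via the rotation homotopy is more than the paper supplies and is the standard argument.
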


At the level of groups, the product is an associative map
$$KK_p ({\mathcal A},{\mathcal B})\otimes KK_q ({\mathcal B},{\mathcal C})\rightarrow KK_{p+q} ({\mathcal B},{\mathcal C})$$

Given homomorphisms $\alpha \colon {\mathcal A}\rightarrow {\mathcal
B}$ and $\beta \colon {\mathcal B}\rightarrow {\mathcal C}$, the
product $[\alpha ]\cdot [\beta ]$ is the equivalence class of the
composition, $[\beta \circ \alpha ]$.

An essentially abstract argument, as described in section 6.34 of
\cite{CoT} for $R$-algebras rather than $R$-algebroids, yields the
following result.

\begin{theorem} \label{les}
Let
$$0\rightarrow {\mathcal A}\rightarrow {\mathcal B}\rightarrow {\mathcal C}\rightarrow 0$$
be an $F$-split short exact sequence of $R$-algebroids.  Let
$\mathcal D$ be an $R$-algebroid.  Then we have natural maps
$\partial \colon KK_p ({\mathcal A},{\mathcal D})\rightarrow
KK_{p+1}({\mathcal C},{\mathcal D})$ inducing a long exact sequence
of $KK$-theory groups
$$\rightarrow KK_p ({\mathcal C},{\mathcal D})\rightarrow KK_p ({\mathcal B},{\mathcal D})\rightarrow KK_p ({\mathcal A},{\mathcal D})\stackrel{\partial}{\rightarrow} KK_{p+1} ({\mathcal C},{\mathcal D})\rightarrow$$
\noproof
\end{theorem}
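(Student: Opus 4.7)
The plan is to imitate the essentially abstract argument of section 6.34 of \cite{CoT}, which proves this long exact sequence for $R$-algebras. Because the tensor algebroid $T$, the universal extension $0 \to J\mathcal{A} \to T\mathcal{A} \to \mathcal{A} \to 0$, the classifying map (Theorem \ref{ufsplit}), and the path algebroid (Section \ref{path}) have all been generalised to the algebroid setting, and because these are the only algebraic tools Cortinas-Thom use, the adaptation is largely formal. The first step is to define the connecting map: applying Theorem \ref{ufsplit} to the given $F$-split extension produces a natural classifying map $\gamma \colon J\mathcal{C} \to \mathcal{A}$, and composition with $\gamma$ gives $\gamma^\ast \colon KK_p(\mathcal{A}, \mathcal{D}) \to KK_p(J\mathcal{C}, \mathcal{D})$. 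The reindexing identity $KK_p(J\mathcal{C}, \mathcal{D}) \cong KK_{p+1}(\mathcal{C}, \mathcal{D})$ from Section \ref{KKg} then delivers $\partial$, with naturality in $\mathcal{D}$ and in the exact sequence automatic from naturality of the classifying map.

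The next step is to prove exactness, which I would do at the level of the $KK$-theory spectra. The substantive claim is that applying $\KK(-, \mathcal{D})$ to the $F$-split extension produces a homotopy fibre sequence of symmetric spectra
\[
\KK(\mathcal{C},\mathcal{D}) \to \KK(\mathcal{B},\mathcal{D}) \to \KK(\mathcal{A},\mathcal{D}),
\]
whose connecting map agrees with $\gamma^\ast$ up to the suspension identification. Taking homotopy groups then produces the asserted long exact sequence. Exactness at $\KK(\mathcal{B}, \mathcal{D})$ is a direct consequence of functoriality together with the original exactness, and the identification of the connecting map with $\gamma^\ast$ is built into the classifying map construction, so the real content is concentrated in exactness at $\KK(\mathcal{A},\mathcal{D})$ and at $\KK(\mathcal{C},\mathcal{D})$.

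The main obstacle is precisely this fibre-sequence assertion. Concretely, I would need to show that a homomorphism $\alpha \colon J^{2n}\mathcal{A} \to \mathcal{D}_\oplus^{S^n}$ whose composition with $J^{2n}(\gamma) \colon J^{2n+1}\mathcal{C} \to J^{2n}\mathcal{A}$ is algebraically null-homotopic extends, after sufficient stabilisation in $n$, to a homomorphism out of $J^{2n}\mathcal{B}$. The extension is constructed using the $F$-splitting $s \colon \mathcal{C} \to \mathcal{B}$ together with the path algebroid $P\mathcal{B} \oplus_{\mathcal{B}} \mathcal{A}$ of Section \ref{path}, which converts a null-homotopy into the required homomorphism via the universal property of the classifying map. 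Once this lifting is verified to be well-defined on classes modulo algebraic homotopy and stabilisation, the remaining half-exactness statements follow by standard diagram chases in the direct limits defining $KK_\ast$, and the Cortinas-Thom framework then assembles these pieces into the full long exact sequence.
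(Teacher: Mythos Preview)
Your proposal is correct and follows exactly the approach the paper indicates: the paper gives no proof of its own beyond the remark that ``an essentially abstract argument, as described in section 6.34 of \cite{CoT}'' carries over from $R$-algebras to $R$-algebroids, and your outline---classifying map $\gamma\colon J\mathcal{C}\to\mathcal{A}$ from Theorem~\ref{ufsplit}, the degree shift from Section~\ref{KKg}, and the path-algebroid lifting argument---is precisely a sketch of that Cortinas--Thom argument transported to the algebroid setting. Your decision to phrase the exactness at the spectrum level (as a homotopy fibre sequence) is slightly stronger than what is strictly needed, but it is consistent with the paper's spectrum-level framework and implies the group-level long exact sequence.
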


A similar result holds in the other variable; we do not need it
here.

\begin{definition}
We call a homomorphism $\alpha \colon J^{2n}{\mathcal A}\rightarrow
{\mathcal B}^{S^n}_\oplus$ a {\em $KK$-equivalence} if there is an
element
$[\alpha ]^{-1}\in KK_0 ({\mathcal B},{\mathcal A})$ such that
$[\alpha ]^{-1} \cdot [\alpha ] = [1_{\mathcal A}]$ and
$[\alpha] \cdot [\alpha ]^{-1} = [1_{\mathcal B}]$.

We call two $R$-algebroids $\mathcal A$ and $\mathcal B$ {\em
$KK$-equivalent} if there is a $KK$-equivalence $\alpha \colon
J^{2n}{\mathcal A}\rightarrow {\mathcal B}^{S^n}_\oplus$.
\end{definition}

\begin{proposition} \label{KKeq}
Let $\alpha \colon J^{2n} {\mathcal A}\rightarrow {\mathcal
B}^{S^n}_\oplus$ be a $KK$-equivalence.  Then the product with
$\alpha$ induces equivalences of spectra$$\KK ({\mathcal
B},{\mathcal C})\rightarrow \KK ({\mathcal A},{\mathcal C}) \qquad
\KK ({\mathcal C},{\mathcal A})\rightarrow \KK ({\mathcal
C},{\mathcal B})$$for every $R$-algebroid $\mathcal C$.
\end{proposition}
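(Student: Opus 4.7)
The plan is to exploit the associativity and unit properties of the spectrum-level $\sharp$-product from Theorem \ref{KKprodalg}. First I would pick a homomorphism $\beta$ representing the inverse class $[\alpha]^{-1} \in KK_0(\mathcal{B}, \mathcal{A})$ and reformulate ``multiplication by $\alpha$'' as a map of spectra
\[
\mu_\alpha \colon \KK(\mathcal{B}, \mathcal{C}) \simeq S \wedge \KK(\mathcal{B}, \mathcal{C}) \xrightarrow{\tilde{\alpha} \wedge 1} \KK(\mathcal{A}, \mathcal{B}) \wedge \KK(\mathcal{B}, \mathcal{C}) \to \KK(\mathcal{A}, \mathcal{C}),
\]
where $\tilde{\alpha} \colon S \to \KK(\mathcal{A}, \mathcal{B})$ is the map of symmetric spectra selecting the class of $\alpha$; $\mu_\beta$ is defined analogously. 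The analogous construction with multiplication on the other factor provides the candidate equivalence $\KK(\mathcal{C}, \mathcal{A}) \to \KK(\mathcal{C}, \mathcal{B})$ and its proposed inverse.

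By the associativity clause of Theorem \ref{KKprodalg}, the composite $\mu_\beta \circ \mu_\alpha$ is canonically homotopic to multiplication by the product class $[\alpha] \cdot [\beta]$, which by hypothesis equals the identity class of $\mathcal{B}$. Since $\pi_0 \KK(\mathcal{B}, \mathcal{B})$ records precisely the stable homotopy classes of maps $S \to \KK(\mathcal{B}, \mathcal{B})$, any two spectrum-level maps with the same $\pi_0$-class are homotopic; composing with the product map then yields a homotopy $\mu_\beta \circ \mu_\alpha \simeq \mu_{[1_\mathcal{B}]}$, and symmetrically $\mu_\alpha \circ \mu_\beta \simeq \mu_{[1_\mathcal{A}]}$.

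The remaining step is to verify the unit axiom, namely that $\mu_{[1_\mathcal{B}]}$ is homotopic to the identity on $\KK(\mathcal{B}, \mathcal{C})$. Unpacking Definition \ref{Shprodalg} with $\alpha = 1_\mathcal{B}$ (so $m = 0$), the product $1_\mathcal{B} \sharp \gamma$ of any representative $\gamma \colon J^{2k}\mathcal{B} \to \mathcal{C}^{S^k}_\oplus$ collapses, after identifying $\mathcal{B}^{S^0}_\oplus \cong \mathcal{B}_\oplus$ and observing that the natural map $s$ is the identity in this degenerate case, to $\gamma_\oplus$, which represents the same homotopy class as $\gamma$. Assembling this over all $k$ gives $\mu_{[1_\mathcal{B}]} \simeq \mathrm{id}$, so $\mu_\alpha$ and $\mu_\beta$ are mutually inverse equivalences. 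The main obstacle is exactly this last step: associativity and the $\pi_0$-detection of homotopy are essentially formal, but honestly verifying the unit axiom at the spectrum level, rather than merely in $KK_0$, requires careful bookkeeping with the additive completion functor and the natural maps $s$ from the proof of the preceding proposition. The second claimed equivalence follows from the symmetric argument using left multiplication.
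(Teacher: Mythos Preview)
Your argument is correct in outline but takes a genuinely different route from the paper's, and it is considerably more delicate than necessary.

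The paper does not attempt to produce a homotopy inverse to $\alpha\sharp$ at the spectrum level. Instead it simply checks that $\alpha\sharp\colon \KK(\mathcal B,\mathcal C)\to\KK(\mathcal A,\mathcal C)$ induces an isomorphism on every stable homotopy group. For $p=0$ this is immediate from the definition of $KK$-equivalence and associativity of the product on $KK_0$. For $p\geq 0$ one uses the identification $KK_p(-,\mathcal C)\cong KK_0(-,\Omega^p\mathcal C)$ established earlier in the section, so the $p=0$ argument applied with $\Omega^p\mathcal C$ in place of $\mathcal C$ gives the isomorphism on $\pi_p$. That is the entire proof: a $\pi_\ast$-isomorphism, hence a stable equivalence.

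Your approach instead builds $\mu_\beta$ as a candidate inverse and argues $\mu_\beta\circ\mu_\alpha\simeq\mathrm{id}$ via associativity and the unit axiom. This buys you an actual homotopy equivalence rather than merely a weak one, but it costs you two nontrivial verifications the paper never needs. First, the sentence ``$\pi_0\KK(\mathcal B,\mathcal B)$ records precisely the stable homotopy classes of maps $S\to\KK(\mathcal B,\mathcal B)$'' is not automatic for symmetric spectra: naive $\pi_0$ agrees with $[S,-]$ in the stable homotopy category only for semistable spectra, and nothing in the paper establishes that $\KK(\mathcal B,\mathcal B)$ is semistable. Second, as you yourself flag, the spectrum-level unit axiom requires genuine bookkeeping with the additive completion and the maps $s$. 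Neither obstacle is insurmountable, but the paper's $\pi_\ast$-argument sidesteps both entirely by never leaving the level of homotopy groups.
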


\begin{proof}
Let us just look at the map
$$\alpha \sharp \colon \KK ({\mathcal B},{\mathcal C})\rightarrow \KK ({\mathcal A},{\mathcal C})$$
since the other case is almost identical.  The map $\alpha \sharp$
induces an isomorphism
$$\pi_0 \KK ({\mathcal B},{\mathcal C}) = KK_0 ({\mathcal B},{\mathcal C})\rightarrow KK_0 ({\mathcal A},{\mathcal C})  =\pi_0 \KK ({\mathcal A},{\mathcal C})$$
by definition of the $KK$-theory group and the term
$KK$-equivalence.

Let $p\geq 0$.  Replacing the $R$-algebroid $\mathcal C$ by the
$R$-algebroid $\Omega^p {\mathcal C}$, we see that the map $\alpha
\sharp$ also induces an isomorphism
$$\pi_p \KK ({\mathcal A},{\mathcal C}) = KK_p ({\mathcal B},{\mathcal C})\rightarrow KK_p ({\mathcal A},{\mathcal C})  =\pi_p \KK ({\mathcal A},{\mathcal C})$$

Thus the map $\alpha \sharp$ is an equivalence of spectra as
desired, and we are done.
\end{proof}

Observe that the category-theoretic concept of natural isomorphism
makes sense when we are talking about homomorphisms of {\em unital}
$R$-algebroids.

\begin{lemma} \label{nisog}
Let $\alpha , \beta \colon {\mathcal A}\rightarrow {\mathcal B}$ be
naturally isomorphic homomorphisms of unital $R$-algebroids.  Then
the maps $\alpha$ and $\beta$ are simplicially homotopic at the
level of $KK$-theory spectra.
\end{lemma}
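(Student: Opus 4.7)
The strategy is to construct an explicit algebraic homotopy between appropriate stabilisations of $\alpha$ and $\beta$ in $\mathcal{B}_\oplus$, and then use the fact that the $\KK$-spectrum construction converts algebraic homotopies into simplicial ones via the simplicial $R$-algebroid $\mathcal{B}^\Delta$ appearing in the definition of $\HOM$.

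First, I would replace $\alpha$ and $\beta$ by stabilised homomorphisms $\tilde\alpha, \tilde\beta \colon \mathcal{A} \to \mathcal{B}_\oplus$ with a common object assignment $a \mapsto \alpha(a) \oplus \beta(a) \oplus \alpha(a) \oplus \beta(a)$, defined on morphisms by $\tilde\alpha(x) = \alpha(x) \oplus 0 \oplus 0 \oplus 0$ and $\tilde\beta(x) = 0 \oplus \beta(x) \oplus 0 \oplus 0$ block-diagonally; these represent the same $KK$-classes as $\alpha$ and $\beta$ by the usual stabilisation principle in $KK_0$. The natural isomorphism $\eta$ now produces, for each object $a$, an invertible $4 \times 4$ block matrix $V_a \in \End(\alpha(a) \oplus \beta(a) \oplus \alpha(a) \oplus \beta(a))_{\mathcal{B}_\oplus}$ that swaps the first two summands via $\eta_a$ and $\eta_a^{-1}$. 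The naturality identity $\eta_b \alpha(x) = \beta(x)\eta_a$ translates to the intertwining relation $V_b \tilde\alpha(x) V_a^{-1} = \tilde\beta(x)$.

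The heart of the construction is the classical Whitehead trick: after one further $M_2$-stabilisation, the element $V_a \oplus V_a^{-1}$ factors as a product of elementary unitriangular matrices, each of which admits an obvious polynomial homotopy $E_{ij}(X) \mapsto E_{ij}(tX)$ to the identity. Concatenating these factor homotopies produces a polynomial path $U_a(t)$ of invertibles with $U_a(0) = I$ and $U_a(1) = V_a$ (up to the extra stabilisation). Setting
\[
H(x)(t) = U_b(t) \cdot \tilde\alpha(x) \cdot U_a(t)^{-1}
\]
defines a homomorphism $H \colon \mathcal{A} \to \mathcal{B}_\oplus^{\Delta^1}$ with $e_0 \circ H = \tilde\alpha$ and $e_1 \circ H = \tilde\beta$; the conjugation form together with pointwise invertibility of $U_a(t)$ forces $H$ to respect composition and $R$-linearity. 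By Theorem~\ref{HOM} and the functoriality of $\HOM$, this algebraic homotopy pushes forward to the required simplicial homotopy of the induced maps of $\KK$-spectra.

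The main obstacle is the determinant-sign issue implicit in $V_a$: the swap factor obstructs a direct expression as a product of elementary matrices in $M_4$, which is why one first passes to $V_a \oplus V_a^{-1}$ where, by standard commutator identities, the product lies in the elementary subgroup. A secondary technicality is that $V_a$ and the Whitehead factorisation depend on the object $a$; however, the fact that the assignment $x \mapsto U_b(t) \tilde\alpha(x) U_a(t)^{-1}$ defines an algebroid homomorphism follows automatically from the pointwise invertibility of $U_a(t)$, so no additional coherence is required beyond the naturality of $\eta$ itself.
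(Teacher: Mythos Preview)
Your proof is correct and follows the same overall strategy as the paper: stabilise $\alpha$ and $\beta$ to share an object assignment in $\mathcal{B}_\oplus$, build a polynomial path of invertibles from the identity to a swap matrix encoding the natural isomorphism, and conjugate to obtain an algebraic homotopy, then invoke Theorem~\ref{HOM}. The only real difference is in how the path of invertibles is produced. The paper works already at the level of $2\times 2$ blocks and writes down a single explicit cubic invertible
\[
W=\begin{pmatrix} 1-t^2 & (t^3-2t)g_a^{-1} \\ t\,g_a & 1-t^2 \end{pmatrix},
\]
which evaluates to the identity at $t=0$ and to the signed swap $\left(\begin{smallmatrix} 0 & -g_a^{-1} \\ g_a & 0\end{smallmatrix}\right)$ at $t=1$; you instead invoke the Whitehead factorisation of $V_a\oplus V_a^{-1}$ into elementary matrices, at the cost of one further $M_2$-stabilisation. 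The paper's route is slightly more economical in block size, while yours is more systematic and avoids having to guess a clever matrix; both land in the same place.
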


\begin{proof}
By definition of the $KK$-theory spectrum, the space $\KK ({\mathcal
A},{\mathcal B})_0$ is the simplicial set $\HOM ({\mathcal
A},{\mathcal B}_H )$.  In this space, the homomorphisms
$\alpha$ and $\beta$ are the same as the homomorphisms $\alpha'
\colon {\mathcal A}\rightarrow {\mathcal B}_\oplus$ and $\beta'
\colon {\mathcal A}\rightarrow {\mathcal B}_\oplus$ defined by
writing
\[
\alpha'(a) = \alpha (a) \oplus \beta (a),\ \beta'(a) = \alpha (a)
\oplus \beta (a) \qquad a\in \Ob ({\mathcal A})
\]
and
\[
\alpha' (x) = \left( \begin{array}{cc}
\alpha (x) & 0 \\
0 & 1 \\
\end{array} \right) \qquad
\beta' (x) = \left( \begin{array}{cc}
1 & 0 \\
0 & \beta (x) \\
\end{array} \right)
\]
where $x\in \Hom (a,b)_{\mathcal A}$.

Since the homomorphisms $\alpha$ and $\beta$ are naturally
equivalent, we can find invertible morphisms $g_a \in \Hom (\alpha
(a),\beta (a))_{\mathcal B}$ for each object $a\in \Ob ({\mathcal
A})$ such that $\beta (x)g_a = g_a^{-1} \alpha (x)$ for all $x\in
\Hom (a,b)_{\mathcal A}$.

Let
$$W = \left( \begin{array}{cc}
1-t^2 & (t^3 -2t)g_a^{-1} \\
tg_a & 1-t^2 \\
\end{array} \right)$$

Then the matrix $W$ is an invertible morphism in the $R$-algebroid
$({\mathcal A}\otimes {\mathbb Z}[t])_\oplus$; the inverse is the
matrix
$$W^{-1} = \left( \begin{array}{cc}
1-t^2 & (2t-t^3)g_a^{-1} \\
-tg_a & 1-t^2 \\
\end{array} \right)$$

Further,
$$e_0 (W) = \left( \begin{array}{cc}
1 & 0 \\
0 & 1 \\ \end{array} \right) = e_0 (W^{-1}) \qquad ev_1 (W) = \left(
\begin{array}{cc}
0 & -g_a^{-1} \\
g_a & 0 \\ \end{array} \right) = -e_1 (W^{-1})$$

Hence
$$e_0 (W \left( \begin{array}{cc}
\alpha (x) & 0 \\
0 & 1 \\
\end{array} \right) W^{-1}) = \alpha' (x)$$
and
$$e_1 (W \left( \begin{array}{cc}
\alpha (x) & 0 \\
0 & 1 \\
\end{array} \right) W^{-1}) = \beta' (x)$$

Therefore the homomorphisms $\alpha'$ and $\beta'$ are algebraically
homotopic.  The result now follows by theorem \ref{HOM}.
\end{proof}

The following result is immediate from the above lemma and
proposition \ref{KKeq}.

\begin{theorem} \label{equivalent2}
Let $\mathcal A$ and ${\mathcal A}'$ be equivalent unital
$R$-algebroids.  Let $\mathcal B$ be another $R$-algebroid.  Then
the spectra $\KK ({\mathcal A},{\mathcal B})$ and $\KK ({\mathcal
A}',{\mathcal B})$ are stably equivalent, and the spectra $\KK
({\mathcal B},{\mathcal A})$ and $\KK ({\mathcal B},{\mathcal A}')$
are homotopy-equivalent. \noproof
\end{theorem}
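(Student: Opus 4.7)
The plan is to combine Lemma \ref{nisog} with Proposition \ref{KKeq}: an equivalence of unital $R$-algebroids, by the lemma, becomes a $KK$-equivalence, and then the proposition promotes it to an equivalence of $KK$-theory spectra in either variable.

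More concretely, an equivalence of unital $R$-algebroids $\mathcal A$ and $\mathcal A'$ furnishes homomorphisms $F\colon {\mathcal A}\to {\mathcal A}'$ and $G\colon {\mathcal A}'\to {\mathcal A}$, together with natural isomorphisms $GF \cong 1_{\mathcal A}$ and $FG \cong 1_{{\mathcal A}'}$. The first step is to apply Lemma \ref{nisog} to each of these natural isomorphisms. This tells us that, at the level of $KK$-theory spectra, the compositions $G\circ F$ and $F\circ G$ are simplicially homotopic to the respective identity homomorphisms. In particular, passing to $\pi_0$ and using the formula $\alpha \sharp \beta = \beta \circ \alpha$ from Theorem \ref{KKprodalg}, we obtain
\[
[F]\cdot [G] = [1_{\mathcal A}] \in KK_0({\mathcal A},{\mathcal A}),\qquad [G]\cdot [F] = [1_{{\mathcal A}'}] \in KK_0({\mathcal A}',{\mathcal A}').
\]
Thus $F$ is a $KK$-equivalence in the sense of the definition preceding Proposition \ref{KKeq}, with $KK$-inverse given by the class of $G$.

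The second step is simply to feed this $KK$-equivalence into Proposition \ref{KKeq}. That proposition immediately gives equivalences of spectra
\[
\KK({\mathcal A}',{\mathcal B}) \xrightarrow{F\sharp} \KK({\mathcal A},{\mathcal B}),\qquad \KK({\mathcal B},{\mathcal A}) \xrightarrow{\sharp F} \KK({\mathcal B},{\mathcal A}'),
\]
which are precisely the two equivalences claimed in the theorem.

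I do not expect any genuine obstacle here: the hard work has already been done in Lemma \ref{nisog} (where one converts a natural isomorphism into an explicit algebraic homotopy via the matrix $W$) and Proposition \ref{KKeq} (where one transports the isomorphism on $\pi_0$ to all $\pi_p$ by replacing $\mathcal C$ with $\Omega^p \mathcal C$). The only point worth checking carefully is the direction of the compositions, to make sure $F$ really acts as claimed on each variable of $\KK(-,-)$; this is handled by the identity $\alpha \sharp \beta = \beta \circ \alpha$ from Theorem \ref{KKprodalg}, which makes the present result a clean corollary.
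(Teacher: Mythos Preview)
Your proposal is correct and follows exactly the approach the paper intends: the paper states that the theorem is ``immediate from the above lemma and proposition \ref{KKeq}'' and gives no further argument, and you have simply spelled out that immediate deduction. Your care with the direction of the product via $\alpha \sharp \beta = \beta \circ \alpha$ is appropriate and matches the paper's conventions.
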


The following result is proved similarly.

\begin{proposition} \label{plus2}
The symmetric spectra $\KK ({\mathcal A},{\mathcal B})$ and $\KK
({\mathcal A},{\mathcal B}_\oplus )$ are stably equivalent. \noproof
\end{proposition}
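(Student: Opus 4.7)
The plan is to mimic the proof of Theorem \ref{equivalent2}, by producing an explicit comparison map and showing it is a stable equivalence via Lemma \ref{nisog}. The natural inclusion $\iota\colon {\mathcal B}\hookrightarrow {\mathcal B}_\oplus$ sending each object $b$ to the one-term formal sum $b$ (and acting as the identity on each morphism set) induces, by functoriality of the additive completion and of the $H$-construction (Proposition \ref{pbFAC}), a natural map of symmetric spectra
\[
\iota_\ast\colon \KK({\mathcal A},{\mathcal B})\to \KK({\mathcal A},{\mathcal B}_\oplus).
\]

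The key observation driving the argument is that additive completion is idempotent up to natural equivalence: the ``flattening'' functor $\phi\colon ({\mathcal B}_\oplus)_\oplus \to {\mathcal B}_\oplus$ that unfolds a formal sum of formal sums into a single concatenated flat sum is an equivalence of $R$-algebroids, with quasi-inverse given by the obvious one-term inclusion $j\colon {\mathcal B}_\oplus \to ({\mathcal B}_\oplus)_\oplus$. Since the $H$-algebra of Definition \ref{Hfunctor} is constructed as a colimit of morphism algebras in the additive completion, $\phi$ induces a natural isomorphism of $R$-algebras $({\mathcal B}_\oplus)_H \cong {\mathcal B}_H$; under this identification, $\iota_\ast$ corresponds to an endomorphism of $\KK({\mathcal A},{\mathcal B})$ induced by a homomorphism ${\mathcal B}_H \to {\mathcal B}_H$ that is naturally isomorphic to the identity. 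Applying Lemma \ref{nisog} then yields that this endomorphism is simplicially homotopic to the identity at the level of $KK$-theory spectra, so $\iota_\ast$ is a stable equivalence.

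The main obstacle will be the careful bookkeeping needed to check that the flattening equivalence $\phi$ induces a natural isomorphism of $H$-algebras compatible with every piece of structure on the symmetric spectrum $\KK({\mathcal A},-)$: the simplicial twists ${\mathcal B}_H^{S^n}$, the colimit over subdivisions that enters $\HOM$, and the $\Sigma_n$-actions that make the spectrum symmetric. A secondary subtlety is that Lemma \ref{nisog} is phrased for unital $R$-algebroids, while ${\mathcal B}$ is allowed to be non-unital; I expect this can be handled either by passing to a unital enlargement on which the natural isomorphism already exists, or by directly constructing an algebraic homotopy between $\iota\oplus 1$ and $1\oplus\iota$ by the same matrix trick with the element $W$ used in the proof of Lemma \ref{nisog}.
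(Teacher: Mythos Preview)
Your approach is essentially the same as the paper's: the paper only says the result is ``proved similarly'' to Theorem~\ref{equivalent2}, and your plan---use the idempotency of additive completion (the flattening equivalence $({\mathcal B}_\oplus)_\oplus \simeq {\mathcal B}_\oplus$) together with Lemma~\ref{nisog} and Proposition~\ref{KKeq}---is precisely how that analogy is meant to be cashed out. The two subtleties you flag (compatibility with the full symmetric-spectrum structure, and the unitality hypothesis in Lemma~\ref{nisog}) are real but minor, and the paper itself does not spell them out.
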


Let $R$ be a ring where every $R$-algebra is central.  It is shouwn
in \cite{CoT} that for an $R$-algebra $A$ there is a natural
isomorphism $KK_n (R,{\mathcal A}) \cong KH_n ({\mathcal A})$.  The
proof depends on certain universal properties of $KK$-theory and
homotopy $K$-theory, and a universal characterisation of algebraic
$KK$-theory of the type first considered for $C^\ast$-algebras in
\cite{Hig,Th}.  The characterisation involves triangulated
categories; see \cite{MN,Nee}.

The following result is now immediate.

\begin{corollary}
Let $R$ be a ring where every $R$-algebra is central. Let $\mathcal
A$ be an $R$-algebroid that is equivalent to an $R$-algebra.  Then
we have a natural isomorphism
$$KK_n (R,{\mathcal A}) \cong KH_n ({\mathcal A})$$
\noproof
\end{corollary}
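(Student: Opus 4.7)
The plan is to reduce to the case of $R$-algebras already established by Cortinas and Thom, and then transport the isomorphism across the equivalence $\mathcal{A} \simeq A$.

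First, let $A$ be an $R$-algebra that is equivalent to $\mathcal{A}$ as a unital $R$-algebroid. Applying Theorem \ref{equivalent2} to this equivalence (in the second variable) gives a stable equivalence of spectra $\KK(R,\mathcal{A}) \simeq \KK(R,A)$, and hence an isomorphism of $KK$-theory groups $KK_n(R,\mathcal{A}) \cong KK_n(R,A)$ for every $n \in \Z$. This isomorphism is natural because Theorem \ref{equivalent2} is itself natural in the equivalence.

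Next, the Cortinas-Thom identification quoted immediately before the corollary gives a natural isomorphism $KK_n(R,A) \cong KH_n(A)$, since $A$ is an honest $R$-algebra and $R$ is assumed to have the property that every $R$-algebra is central (so the universal characterization of $KK$ and $KH$ from \cite{CoT} applies). Finally, since $KH$-theory is itself invariant under equivalence of unital algebroids --- it is defined (either directly, or via the associated algebra ${\mathcal A}_H$ of Definition \ref{Hfunctor}, or via the additive completion ${\mathcal A}_\oplus$) in a way that is Morita invariant --- we also have $KH_n(A) \cong KH_n(\mathcal{A})$. Composing the three natural isomorphisms yields the claim.

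The only non-routine point is that the three identifications genuinely assemble into a single natural isomorphism, rather than just an abstract one; but Theorem \ref{equivalent2} is stated naturally, the Cortinas-Thom isomorphism is natural in $A$, and the equivalence invariance of $KH$ is natural in the underlying equivalence, so naturality propagates through the composite without difficulty.
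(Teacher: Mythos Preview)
Your argument is correct and is exactly the approach the paper intends: the paper states the corollary as ``immediate'' from Theorem~\ref{equivalent2} (equivalence invariance of $\KK$ in the second variable) together with the Cortinas--Thom identification $KK_n(R,A)\cong KH_n(A)$ for algebras, and that is precisely what you spell out. One small remark: your third step, $KH_n(A)\cong KH_n(\mathcal A)$, is essentially definitional here, since the paper has not yet introduced $\KH$ for algebroids (it does so immediately after the corollary, by setting $\KH(\mathcal A)=\KK(R,\mathcal A)$); so the corollary is really saying that this forthcoming definition agrees with Weibel's $KH$ on the equivalent algebra, and no separate Morita-invariance argument for $KH$ is required.
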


\begin{definition}
Let $\mathcal A$ be an $R$-algebroid.  Then we define the {\em
homotopy algebraic $K$-theory spectrum}
\[ \KH ({\mathcal A}) = \KK (R, {\mathcal A}) \]
\end{definition}

\section{Modules over Algebroids}

The modules we consider in this section were introduced for algebras in
\cite{Kass}.

\begin{definition}
Let $\mathcal A$ be an $R$-algebroid.  Then a {\em right $\mathcal
A$-module} is an $R$-linear contravariant functor from $\mathcal A$
to the category of $R$-modules.

A natural transformation, $T\colon {\mathcal E}\rightarrow {\mathcal
F}$, between two $\mathcal A$-modules is called a {\em
homomorphism}.
\end{definition}

We write ${\mathcal L}({\mathcal A})$ to denote the category of all
right ${\mathcal A}$-modules and homomorphisms.  The category
${\mathcal L}({\mathcal A})$ is clearly a unital
$R$-algebroid.\footnote{Assuming we take all of our right $\mathcal
A$-modules in a given universe, so that the category ${\mathcal
L}({\mathcal A})$ is small.}

\begin{definition}
Given an object $c\in \Ob ({\mathcal A})$, we define $\Hom
(-,c)_{\mathcal A}$ to be the right $\mathcal A$-module with spaces
$\Hom (-,c)_{\mathcal A} (a) = \Hom (a,c)_{\mathcal A}$; the action
of the $R$-algebroid $\mathcal A$ is defined by multiplication.
\end{definition}

\begin{definition}
Let $\mathcal E$ and $\mathcal F$ be right $\mathcal A$-modules.
Then we define the {\em direct sum}, ${\mathcal E}\oplus {\mathcal
F}$, to be the right $\mathcal A$-module with spaces
$$({\mathcal E}\oplus {\mathcal F}) (a)={\mathcal E}(a)\oplus {\mathcal F}(a) \qquad a\in \Ob ({\mathcal A})$$

We call a right $\mathcal A$-module $\mathcal E$ {\em finitely
generated} and {\em projective} if there is a right $\mathcal
A$-module $\mathcal F$ such that the direct sum ${\mathcal E}\oplus
{\mathcal F}$ is isomorphic to a direct sum of the form
$$\Hom (-,c_1)_{\mathcal A}\oplus \cdots \oplus \Hom (-,c_n)_{\mathcal A}$$
\end{definition}

Let us write ${\mathcal L}({\mathcal A}_\mathrm{fgp})$ to denote the
category of all finitely-generated projective $\mathcal A$-modules
and homomorphisms.   The following result follows directly from
theorem \ref{equivalent2} and proposition \ref{plus2}.

\begin{proposition} \label{fgpalg}
Let $\mathcal A$ and $\mathcal B$ be $R$-algebroids.  Then there is
a natural stable equivalence of spectra
$$\KK ({\mathcal A},{\mathcal B})\rightarrow \KK ({\mathcal A},{\mathcal L}({\mathcal B}_\mathrm{fgp}))$$
\noproof
\end{proposition}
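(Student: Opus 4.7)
The plan is to build a chain of stable equivalences
\[
\KK ({\mathcal A},{\mathcal B}) \xrightarrow{\ \sim\ } \KK ({\mathcal A},{\mathcal B}_\oplus ) \xrightarrow{\ \sim\ } \KK ({\mathcal A},{\mathcal L}({\mathcal B}_\mathrm{fgp})),
\]
where the first equivalence is proposition \ref{plus2} applied directly, and the second is obtained by exhibiting $\mathcal{B}_\oplus$ as an equivalent unital $R$-algebroid to $\mathcal{L}(\mathcal{B}_\mathrm{fgp})$ and invoking theorem \ref{equivalent2}.

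First I would construct the natural map via a Yoneda-type functor $Y\colon {\mathcal B}\rightarrow {\mathcal L}({\mathcal B}_\mathrm{fgp})$ defined on objects by $c\mapsto \Hom(-,c)_{\mathcal B}$ and on morphisms by left multiplication. Each $\Hom (-,c)_{\mathcal B}$ is finitely generated projective by definition (take $n=1$, ${\mathcal F}=0$), so $Y$ lands in the fgp subcategory; by the Yoneda lemma $Y$ is fully faithful. Functoriality of $\KK$ in the second variable produces the natural map of spectra in the statement.

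Second, I would extend $Y$ additively to $Y_\oplus \colon {\mathcal B}_\oplus \rightarrow {\mathcal L}({\mathcal B}_\mathrm{fgp})$ by sending $c_1\oplus \cdots \oplus c_n$ to $\Hom(-,c_1)_{\mathcal B}\oplus \cdots \oplus \Hom(-,c_n)_{\mathcal B}$. Fully faithfulness persists: morphism sets in ${\mathcal B}_\oplus$ are matrices with entries $x_{ij}\in \Hom (c_j,d_i)_{\mathcal B}$, and these biject with natural transformations between the corresponding finite direct sums of representables, again by Yoneda. Essential surjectivity is, by definition of ``finitely generated projective,'' only true up to retracts: every fgp module is a direct summand of a free module $\bigoplus_i \Hom (-,c_i)_{\mathcal B}$ in the image of $Y_\oplus$. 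Thus $Y_\oplus$ realises $\mathcal{L}(\mathcal{B}_\mathrm{fgp})$ as the Karoubi completion of the essential image of $\mathcal{B}_\oplus$, and this is the sense in which the two algebroids should be declared ``equivalent'' for the purposes of theorem \ref{equivalent2}.

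The main obstacle is therefore step three: verifying that $Y_\oplus$ qualifies as the sort of equivalence of unital $R$-algebroids that theorem \ref{equivalent2} accepts, i.e.\ that passing from $\mathcal{B}_\oplus$ to its idempotent completion does not change $\KK(\mathcal{A},-)$. Two routes are available. The slicker one is to note that in $\mathcal{L}({\mathcal B}_\mathrm{fgp})_\oplus$ any idempotent already splits (direct sums exist in $\mathcal{L}({\mathcal B}_\mathrm{fgp})$), apply proposition \ref{plus2} on both sides to reduce to comparing additively completed algebroids, and then use lemma \ref{nisog} to identify a projection-of-a-free-module with a direct summand up to natural isomorphism -- the argument is formally the same ``matrix conjugation by $W$'' trick used in the proof of lemma \ref{nisog}, now applied to splitting idempotents. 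The second route is a direct cofinality argument in $\KK$: the inclusion $\mathcal{B}_\oplus \hookrightarrow \mathcal{L}(\mathcal{B}_\mathrm{fgp})$ is cofinal through direct summands, and the long exact sequence theorem \ref{les} lets one conclude the induced map on each $KK_p(\mathcal{A},-)$ is an isomorphism. Either way, once this step is settled, theorem \ref{equivalent2} finishes the argument.
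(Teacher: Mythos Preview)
Your outline is exactly the paper's intended route: the proposition is stated without proof, with the remark that it ``follows directly from theorem \ref{equivalent2} and proposition \ref{plus2}.'' So the chain $\KK({\mathcal A},{\mathcal B})\simeq \KK({\mathcal A},{\mathcal B}_\oplus)\simeq \KK({\mathcal A},{\mathcal L}({\mathcal B}_\mathrm{fgp}))$ via the Yoneda embedding is precisely what is meant.

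You have, however, been more scrupulous than the paper. Theorem \ref{equivalent2} as stated requires an honest equivalence of unital $R$-algebroids, and you correctly observe that $Y_\oplus \colon {\mathcal B}_\oplus \to {\mathcal L}({\mathcal B}_\mathrm{fgp})$ is fully faithful but in general only dense up to retracts: the target is the idempotent completion of the image. The paper glosses over this. Your first proposed repair --- applying proposition \ref{plus2} on both sides and then using the rotation-matrix trick from the proof of lemma \ref{nisog} to split idempotents inside $(-)_\oplus$ --- is in the same spirit as the paper's own arguments (indeed proposition \ref{plus2} is said to be ``proved similarly'' to lemma \ref{nisog}), and is the fix most consonant with what the author presumably had in mind. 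Your second route via cofinality and theorem \ref{les} would also work but is less direct here. In short: same strategy as the paper, with a genuine gap in the paper's one-line justification that you have identified and adequately patched.
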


Let $f\colon {\mathcal A}\rightarrow {\mathcal B}$ be a homomorphism
of $R$-algebroids, and let $\mathcal E$ be a right $\mathcal
A$-module.  Then we can define a right $\mathcal B$-module, $f_\ast
{\mathcal E} = {\mathcal B}\otimes_{\mathcal A}{\mathcal E}$ using a
similar method to that used for Hilbert modules over
$C^\ast$-algebras (see for example \cite{Lan}). Actually, the
procedure is slightly easier since we do not need to worry about the
analytic issues that are present in the $C^\ast$-algebra case.

\begin{definition}
Let $\mathcal A$ and $\mathcal B$ be $R$-algebroids.  Then an {\em
$({\mathcal A},{\mathcal B})$-bimodule}, $\mathcal F$, is an $R$-algebroid
homomorphism ${\mathcal F}\colon {\mathcal A} \rightarrow {\mathcal
L}({\mathcal B})$.  An $({\mathcal A},{\mathcal B})$-bimodule $\mathcal F$ is
termed {\em finitely generated and projective} if it is a
homomorphism
${\mathcal
F}\colon {\mathcal A} \rightarrow {\mathcal L}({\mathcal
B}_\mathrm{fgp})$.
\end{definition}

Let $\mathcal F$ be an $({\mathcal A},{\mathcal B})$-bimodule.  For
each object $a\in \Ob ({\mathcal A})$, let us write ${\mathcal
F}(-,a)$ to denote the associated right ${\mathcal B}$-module.  Then
for each object $b\in \Ob ({\mathcal B})$ we have an $R$-module
${\mathcal F}(b,a)$.  For each morphism $x\in \Hom (a,a')_{\mathcal
A}$, we have an induced homomorphism $x \colon {\mathcal
F}(-,a)\rightarrow {\mathcal F}(-,a')$.

Given an $R$-algebroid homomorphism $f\colon {\mathcal A}\rightarrow
{\mathcal B}$, there is an associated $({\mathcal A},{\mathcal
B})$-bimodule which we will simply label $\mathcal B$.  The right
$\mathcal B$-module associated to the object $a\in \Ob ({\mathcal
A})$ is $\Hom (-,f(a))_{\mathcal B}$.  The homomorphisms associated
to morphisms in the category $\mathcal A$ are defined in the obvious
way through the functor $f$.

\begin{definition}
Let $\mathcal E$ be a $\mathcal A$-module, and let $\mathcal F$ be a
$({\mathcal A},{\mathcal B})$-bimodule.  Let $b\in \Ob ({\mathcal
B})$.  Then we define the {\em algebraic tensor product} ${\mathcal
E}\otimes_{\mathcal A}{\mathcal F}(b,-)$ to be the $R$-module
consisting of all formal linear combinations
$$\lambda_1 (\eta_1 ,\xi_1) + \cdots +\lambda_n (\eta_n ,\xi_n )$$
where $\lambda_i \in R$, $\eta_i \in {\mathcal E}(a)$, $\xi_i \in
{\mathcal F}(b,a)$, and $a\in \Ob ({\mathcal A})$, modulo the
equivalence relation $\sim$ defined by writing

\begin{itemize}

\item $(\eta_1 + \eta_2 ,\xi ) \sim (\eta_1 , \xi ) + (\eta_2 ,\xi )$

\item $(\eta, \xi_1 + \xi_2 )\sim (\eta , \xi_1 ) + (\eta , \xi_2 )$

\item $(\eta , x\xi ) \sim (\eta x, \xi )$ for each morphism $x\in
\Hom (a,a')_{\mathcal A}$.

\end{itemize}

\end{definition}

Let us write $\eta \otimes \xi$ to denote the equivalence class of
the pair $(\eta , \xi )$.

\begin{definition}
We write ${\mathcal E}\otimes_{\mathcal A}{\mathcal F}$ to denote
the right $\mathcal B$-module defined by associating the $R$-module
${\mathcal E}\otimes_{\mathcal A}{\mathcal F}(b,-)$ to an object
$b\in \Ob ({\mathcal B})$.
\end{definition}

Given a homomorphism of right $\mathcal A$-modules, $T\colon
{\mathcal E}\rightarrow {\mathcal E}'$, there is an induced
homomorphism $T\otimes 1 \colon {\mathcal E}\otimes_{\mathcal
A}{\mathcal F}\rightarrow {\mathcal E}'\otimes_{\mathcal A}{\mathcal
F}$, defined in the obvious way.

Recall that a homomorphism $f\colon {\mathcal A}\rightarrow
{\mathcal B}$ turns the $R$-algebroid $\mathcal B$ into an
$({\mathcal A},{\mathcal B})$-bimodule.  Thus, given an $\mathcal
A$-module $\mathcal E$, we can form the tensor product ${\mathcal
E}\otimes_{\mathcal A}{\mathcal B}$.

We can write $f_\ast {\mathcal E} = {\mathcal E}\otimes_{\mathcal
A}{\mathcal B}$.  A homomorphism of $\mathcal A$-modules, $T\colon
{\mathcal E}\rightarrow {\mathcal E}'$ yields a functorially induced
map $f_\ast T = T\otimes 1\colon {\mathcal E}\otimes_{\mathcal
A}{\mathcal B}\rightarrow {\mathcal E}'\otimes_{\mathcal A}{\mathcal
B}$.

\begin{theorem} \label{Mproducta}
Let $\mathcal E$ be a finitely generated projective right $\mathcal
A$-module.  Then we have a canonical morphism of spectra
$${\mathcal E}\wedge \colon \KK ({\mathcal A}, {\mathcal B})\rightarrow \KK (R,{\mathcal B})$$

This morphism is natural in the variable ${\mathcal B}$ in the
obvious sense.  Given an $R$-algebroid homomorphism $f\colon
{\mathcal A}\rightarrow {\mathcal A}'$, we have a commutative
diagram
$$\begin{array}{cccc}
{\mathcal E}\wedge \colon & \KK ({\mathcal A},{\mathcal B}) & \rightarrow & \KK (R,{\mathcal B}) \\
& \uparrow & & \| \\
{\mathcal E}\otimes_{\mathcal A}{\mathcal A}'\wedge \colon & \KK ({\mathcal A}',{\mathcal B}) & \rightarrow & \KK (R,{\mathcal B}) \\
\end{array}$$
\end{theorem}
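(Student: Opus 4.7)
The plan is to construct $\mathcal{E}\wedge$ by first producing a homotopy class in $\pi_0\KK(R,\mathcal{A})$ from $\mathcal{E}$, and then smashing with this class and applying the product of theorem \ref{KKprodalg}. Since $\mathcal{E}$ is finitely generated projective, it is by definition an object of the unital $R$-algebroid $\mathcal{L}(\mathcal{A}_{\mathrm{fgp}})$, so, regarding $R$ as a one-object $R$-algebroid, I obtain a canonical $R$-algebroid homomorphism
$$e_\mathcal{E}\colon R\to \mathcal{L}(\mathcal{A}_{\mathrm{fgp}})$$
sending the unique object to $\mathcal{E}$ and each $r\in R$ to the scalar endomorphism $r\cdot 1_\mathcal{E}$; this represents a class in $\pi_0\KK(R,\mathcal{L}(\mathcal{A}_{\mathrm{fgp}}))$. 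By proposition \ref{fgpalg} applied with first variable $R$, the canonical map $\KK(R,\mathcal{A})\to \KK(R,\mathcal{L}(\mathcal{A}_{\mathrm{fgp}}))$ is a stable equivalence, so in the stable homotopy category I can choose a preimage, yielding a class $[\mathcal{E}]\in \pi_0\KK(R,\mathcal{A})$, equivalently a map of spectra $S\to \KK(R,\mathcal{A})$ well defined up to homotopy.

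The desired morphism is then the composition
$$\KK(\mathcal{A},\mathcal{B})\simeq S\wedge \KK(\mathcal{A},\mathcal{B})\xrightarrow{[\mathcal{E}]\wedge\mathrm{id}}\KK(R,\mathcal{A})\wedge \KK(\mathcal{A},\mathcal{B})\to \KK(R,\mathcal{B}),$$
where the last arrow is the product from theorem \ref{KKprodalg}. Naturality in $\mathcal{B}$ is immediate from the naturality of that product. For the compatibility diagram associated to $f\colon \mathcal{A}\to \mathcal{A}'$, the key observation is that the composite
$$R\xrightarrow{e_\mathcal{E}}\mathcal{L}(\mathcal{A}_{\mathrm{fgp}})\xrightarrow{f_\ast}\mathcal{L}(\mathcal{A}'_{\mathrm{fgp}})$$
is canonically naturally isomorphic to $e_{\mathcal{E}\otimes_\mathcal{A}\mathcal{A}'}$; by lemma \ref{nisog} this natural isomorphism promotes to a simplicial homotopy at the level of $KK$-spectra, so the two classes agree in $\pi_0\KK(R,\mathcal{A}')$. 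Combining this identification with associativity of the product then yields the commutative diagram.

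The main obstacle I anticipate is that proposition \ref{fgpalg} provides only a stable equivalence and not a strictly natural map whose inverse is given on the nose, so the preimage class $[\mathcal{E}]$, and hence the morphism $\mathcal{E}\wedge$, are canonical only in the stable homotopy category. To upgrade this to a genuine morphism of symmetric spectra that is strictly natural in $\mathcal{A}$ and $\mathcal{B}$ one has to work inside a functorial cofibrant-fibrant replacement in the symmetric spectrum model structure of \cite{HSS}; once this framework is fixed, the verifications above reduce to routine naturality checks on the product and on $e_{(-)}$.
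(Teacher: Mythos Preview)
Your approach is essentially the same as the paper's: define the homomorphism $e_{\mathcal E}\colon R\to \mathcal{L}(\mathcal{A}_{\mathrm{fgp}})$, invoke the stable equivalence of proposition \ref{fgpalg} and its formal inverse, and then take the product; the paper is equally informal about the ``formal inverse'' step, so your remarks on that point are well taken. One small simplification: for the compatibility with $f$, the paper checks the diagram directly at the level of $\mathcal{L}(\mathcal{A}_{\mathrm{fgp}})$ and $\mathcal{L}(\mathcal{A}'_{\mathrm{fgp}})$ and observes that $f_\ast \circ e_{\mathcal E}$ and $e_{\mathcal{E}\otimes_{\mathcal A}\mathcal{A}'}$ are literally the same homomorphism (since $f_\ast \mathcal{E}=\mathcal{E}\otimes_{\mathcal A}\mathcal{A}'$ by definition), so your appeal to natural isomorphism and lemma \ref{nisog} is unnecessary.
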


\begin{proof}
The right $\mathcal A$-module $\mathcal E$ defines a homomorphism
$R\rightarrow {\mathcal L}({\mathcal A}_\mathrm{fgp})$ by mapping
the one object of $R$, considered as an $R$-algebroid, to the
Hilbert module $\mathcal E$, and the unit $1$ to the identity
homomorphism on $\mathcal E$.  Thus the Hilbert module $\mathcal E$
defines an element in the $0$-th space of the spectrum $\KK
(R,{\mathcal L}({\mathcal A}_\mathrm{fgp}))$.

By proposition \ref{fgpalg}, we have a natural equivalence of
symmetric spectra
$$\KK (R,{\mathcal L}({\mathcal A}_\mathrm{fgp}))\rightarrow \KK (R,{\mathcal A})$$

By composition of the formal inverse of the above equivalence with the product, we obtain a canonical map
$${\mathcal E}\wedge \colon \KK ({\mathcal A}, {\mathcal B})\rightarrow \KK (R,{\mathcal B})$$

Naturality in the variable $\mathcal B$ follows by associativity of
the product.  To prove that the stated diagram is commutative, we
need to prove that the diagram
$$\begin{array}{cccc}
{\mathcal E}\wedge \colon & \KK ({\mathcal L}({\mathcal A}_\mathrm{fgp}),{\mathcal B}) & \rightarrow & \KK (R,{\mathcal B}) \\
& \uparrow & & \| \\
{\mathcal E}\otimes_{\mathcal A}{\mathcal A}'\wedge \colon & \KK ({\mathcal L}({\mathcal A}'_\mathrm{fgp}),{\mathcal B}) & \rightarrow & \KK (R,{\mathcal B}) \\
\end{array}$$
is commutative.

The homomorphism $f_\ast \colon {\mathcal L}({\mathcal
A}_\mathrm{fgp})\rightarrow {\mathcal L}({\mathcal
A}'_\mathrm{fgp})$ is defined by mapping the right $\mathcal
A$-module $\mathcal E$ to the right $\mathcal A$-module ${\mathcal
E}\otimes_{\mathcal A}{\mathcal A}'$, and the homomorphism $T\colon
{\mathcal E}\rightarrow {\mathcal F}$ to the homomorphism $T\otimes
1 \colon {\mathcal E}\otimes_{\mathcal A}{\mathcal A}' \rightarrow
{\mathcal F}\otimes_{\mathcal A}{\mathcal A}'$.

Now, the map at the bottom of the diagram is defined by the product
with the homomorphism $R\rightarrow {\mathcal L}({\mathcal
A}'_\mathrm{fgp})$ defined by the right ${\mathcal A}'$-module
${\mathcal E}\otimes_{\mathcal A}{\mathcal A}'$.  The composition of
the vertical map on the left and the map at the top of the diagram
is defined by the product with the composition of the homomorphism
$f_\ast$ and the homomorphism ${\mathbb C}\rightarrow {\mathcal
L}({\mathcal A})$ defined by the right ${\mathcal A}'$-module
${\mathcal E}$.

By construction, these two homomorphisms are the same, and we are
done.
\end{proof}

We are actually going to use a slight generalisation of the above
theorem; the proof is essentially the same as the above.

\begin{definition}
Let $\mathcal A$ be an $R$-algebroid.  Let $a\in \Ob ({\mathcal
A})$.  Then we define the {\em path-component} of $a$, ${\mathcal
A}|_{\Or (a)}$, to be the full subcategory of $\mathcal A$
containing all objects $b\in \Ob ({\mathcal A})$ such that $\Hom
(a,b)_{\mathcal A} \neq 0$.

We call a right $\mathcal A$-module $\mathcal E$ {\em almost
finitely generated and projective} if the restriction to each
path-component is finitely generated and projective.
\end{definition}

\begin{theorem} \label{Mproduct2a}
Let $\mathcal E$ be an almost finitely generated projective
$\mathcal A$-module.  Then we have a canonical morphism of spectra
$${\mathcal E}\wedge \colon \KK ({\mathcal A}, {\mathcal B})\rightarrow \KK (R,{\mathcal B})$$

This morphism is natural in the variable ${\mathcal B}$ in the
obvious sense.  Given an $R$-algebroid homomorphism $f\colon
{\mathcal A}\rightarrow {\mathcal A}'$, we have a commutative
diagram
$$\begin{array}{cccc}
{\mathcal E}\wedge \colon & \KK ({\mathcal A},{\mathcal B}) & \rightarrow & \KK (R,{\mathcal B}) \\
& \uparrow & & \| \\
{\mathcal E}\otimes_{\mathcal A}{\mathcal A}'\wedge \colon & \KK ({\mathcal A}',{\mathcal B}) & \rightarrow & \KK (R,{\mathcal B}) \\
\end{array}$$
\noproof
\end{theorem}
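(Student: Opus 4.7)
The plan is to mirror the proof of Theorem~\ref{Mproducta} verbatim, adapting only the single step that interprets $\mathcal{E}$ as a homomorphism into a category of fgp modules. For each object $a \in \Ob(\mathcal{A})$, the restriction $\mathcal{E}|_{\Or(a)}$ to the path-component of $a$ is fgp by hypothesis, so $\mathcal{E}$ still determines a homomorphism of $R$-algebroids from $R$ to the full subcategory $\mathcal{L}(\mathcal{A}_{\mathrm{afgp}}) \subseteq \mathcal{L}(\mathcal{A})$ of almost fgp modules, and hence an element of the $0$th space of $\KK(R, \mathcal{L}(\mathcal{A}_{\mathrm{afgp}}))$.

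The crucial technical step is to establish an analogue of proposition~\ref{fgpalg} in the almost fgp setting, namely that the inclusion $\mathcal{L}(\mathcal{A}_{\mathrm{fgp}}) \hookrightarrow \mathcal{L}(\mathcal{A}_{\mathrm{afgp}})$ induces a stable equivalence
$$
\KK(R, \mathcal{A}) \to \KK(R, \mathcal{L}(\mathcal{A}_{\mathrm{afgp}})).
$$
I would prove this by decomposing both sides over path-components of $\mathcal{A}$. An almost fgp module is, by its very definition, fgp on each path-component, and since $\Hom(a, b)_{\mathcal{A}}$ is zero whenever $b \notin \Or(a)$, this decomposition is compatible with the constructions entering the $\KK$-spectrum. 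On each path-component, proposition~\ref{fgpalg} gives the required equivalence, and assembling over components yields the global statement. Once this is available, the map $\mathcal{E}\wedge \colon \KK(\mathcal{A}, \mathcal{B}) \to \KK(R, \mathcal{B})$ is constructed exactly as in the proof of Theorem~\ref{Mproducta}, by composing the formal inverse of this equivalence with the product of Theorem~\ref{KKprodalg} applied to $\mathcal{E}$ viewed as an element of $\KK(R, \mathcal{A})_0$.

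Naturality in $\mathcal{B}$ is inherited from the associativity of the product. The commutative diagram for a homomorphism $f \colon \mathcal{A} \to \mathcal{A}'$ reduces to the identity $f_{\ast}\mathcal{E} = \mathcal{E} \otimes_{\mathcal{A}} \mathcal{A}'$ together with the observation that $f_{\ast}\mathcal{E}$ is again almost fgp; this is the same formal check as at the end of the proof of Theorem~\ref{Mproducta}. The main obstacle is the path-component decomposition used to prove the analogue of proposition~\ref{fgpalg}: although the statement is intuitively clear, ensuring that the decomposition respects every functor in the tower $\HOM(J^{2n}\mathcal{A}, (\mathcal{L}(\mathcal{A}_{\mathrm{afgp}})_H)^{S^n})$ defining $\KK$ requires verifying that path-components behave like coproducts at every level of the construction, which is where the work lies.
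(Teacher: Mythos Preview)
Your proposal is correct and takes essentially the same approach as the paper: the paper gives no proof at all for this theorem, merely remarking that ``the proof is essentially the same as the above'' (i.e., the same as Theorem~\ref{Mproducta}), which is precisely the template you follow. Your identification of the one genuinely new ingredient---replacing Proposition~\ref{fgpalg} by an almost-fgp analogue via a path-component decomposition---is the natural way to make the paper's one-line remark precise, and the honest caveat you flag about checking compatibility of the decomposition with the $\KK$-tower is exactly the bookkeeping the paper elides.
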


\section{Equivariant $KK$-theory}

Let $\mathcal G$ be a discrete groupoid, and let $R$ be a ring.  We
can regard $\mathcal G$ as a small category in which every morphism
is invertible.

\begin{definition}
A {\em
$\mathcal G$-algebra} over $R$ is a functor from the category
$\mathcal G$ to the category of $R$-algebras and homomorphisms.
\end{definition}

Thus, if $A$ is a $\mathcal G$-algebra, then for each object $a\in
\Ob ({\mathcal G})$ we have an algebra $A(a)$.  A morphism $g\in
\Hom (a,b)_{\mathcal G}$ induces a homomorphism $g\colon
A(a)\rightarrow A(b)$.

We can regard an ordinary algebra $C$ as a $\mathcal
G$-algebra by writing $C(a)=C$ for each object $a\in \Ob
({\mathcal G})$ and saying that each morphism in the groupoid
$\mathcal G$ acts as the identity map.

\begin{definition}
A {\em $\mathcal G$-module over $R$} is a
functor from the groupoid $\mathcal G$ to the category of
$R$-bimodules and $R$-linear maps.
\end{definition}

The notation used for $\mathcal G$-modules is the same as that used for $\mathcal G$-algebras. There
is a forgetful functor, $F$, from the category of $\mathcal
G$-algebras to the category of $\mathcal G$-modules.

An {\em equivariant map} between $\mathcal G$-algebras or $\mathcal
G$-modules is the same thing as a natural transformation.

\begin{definition}
Let $A$ and $B$ be $\mathcal G$-algebras.  Then we define the {\em
tensor product} $A\otimes_R B$ to be the $\mathcal G$-algebra where
$(A\otimes_R B)(a) = A(a)\otimes_R B(a)$ for each object $a\in \Ob ({\mathcal G})$, and the $\mathcal
G$-action is defined by writing $g(x\otimes y) = g(x)\otimes g(y)$
whenever $g\in \Hom (a,b)_{\mathcal G}$, $x\in A(a)$, and $y\in
B(a)$.

We define the {\em direct sum} $A\oplus B$ to be the $\mathcal
G$-algebra where $(A\oplus B)(a)$ is the direct sum $A(a)\oplus
B(a)$ for each object $a\in \Ob ({\mathcal G})$ and the $\mathcal
G$-action is defined by writing $g(x\oplus y)= g(x)\oplus g(y)$
whenever $g\in \Hom (a,b)_{\mathcal G}$, $x\in A(a)$, and $y\in
B(a)$.
\end{definition}

We similarly define tensor products and direct sums of $\mathcal
G$-modules.

Recall from section \ref{alghom} that we have a sequence of rings,
$(\Z^{\Delta^n})$; these rings combine to form a simplicial ring
$\Z^\Delta$.  If we equip each such ring with the trivial
$\mathcal G$-action, and $A$ is a $\mathcal G$-algebra, we can form the
tensor product
$A\otimes_{\Z}{\Delta^n}$.

Just as in section \ref{alghom}, given a $\mathcal G$-algebra $A$
and a simplicial set $X$, we can form the $\mathcal G$-algebra.
$A^X$.  If the simplicial set $X$ has a basepoint $+$, we also form the $\mathcal G$-algebra $A^{(X,+)} = \ker (A^X \rightarrow A)$.

\begin{definition}
Let $f_0 , f_1 \colon {A}\rightarrow {B}$ be equivariant maps of
$\mathcal G$-algebras.  An equivariant map $h\colon {\mathcal
A}\rightarrow {\mathcal B}^{\Delta^1}$ such that $e_0 \circ h =f_0$
and $e_1 \circ h = f_1$ is called an {\em elementary homotopy}
between the maps $f_0$ and $f_1$.

We call $f_0$ and $f_1$ {\em algebraically homotopic} if they can be
linked by a chain of elementary homotopies.  We write
$[A,B]_{\mathcal G}$ to denote the set of algebraic homotopy
classes.
\end{definition}

Let us call a directed object in the category of $\mathcal
G$-algebras and equivariant maps a {\em directed $\mathcal
G$-algebra}.  As in the non-equivariant case, simplicial subdivision gives us a
directed $\mathcal G$-algebra $A^{\sd^\bullet X}$.

\begin{definition}
Let $A$ and $B$ be $\mathcal G$-algebras.  Then we define
$\HOM_{\mathcal G} ({A},{B})$ to be the simplicial set defined by
writing
$$\HOM_{\mathcal G} ({A},{B})[n] = \lim_{\rightarrow \atop k} \Hom ({A}, {B}^{\sd^k \Delta^n})$$

The face and degeneracy maps are those inherited from the simplicial
$\mathcal G$-algebra ${B}^\Delta$.
\end{definition}

The following result is proved in the same way as theorem 3.3.2 in
\cite{CoT}.

\begin{theorem}
Let $A$ be a $\mathcal G$-algebra, and let $B$ be a directed
$\mathcal G$-algebra.  Then
$$[{A},{B}^{S^n}]_{\mathcal G} = \pi_n \HOM_{\mathcal G} ({A},{B})$$
\noproof
\end{theorem}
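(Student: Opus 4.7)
The plan is to adapt the proof of Theorem \ref{HOM} (which already follows Theorem 3.3.2 of \cite{CoT}) to the equivariant setting, verifying at each step that the constructions respect the $\mathcal G$-action. The overall strategy has three components: identify based $n$-simplices of $\HOM_{\mathcal G}(A,B)$ with equivariant maps into a kernel algebra, identify that kernel with $B^{S^n}$ up to passage to the colimit, and show that the simplicial homotopy relation on such simplices coincides with algebraic homotopy.

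First, I would unwind the definition: an $n$-simplex of $\HOM_{\mathcal G}(A,B)$ is represented by an equivariant homomorphism $f\colon A\to B^{\sd^k \Delta^n}$ for some $k$, and such an $n$-simplex lies over the basepoint $0$ (the zero map) iff $f$ factors through the kernel $B^{(\sd^k \Delta^n,\,\partial\sd^k\Delta^n)}$. Because $\sd^k\Delta^n/\partial\sd^k\Delta^n$ is a model of the simplicial sphere $S^n$ for every $k$, and the transition maps in the system $\{B^{\sd^k\Delta^n}\}$ are induced by refinement and hence commute with the kernel construction, the colimit of the based $n$-simplices of $\HOM_{\mathcal G}(A,B)$ is the set $\Hom_{\mathcal G}(A, B^{S^n})$ of equivariant homomorphisms (where $B^{S^n}$ is interpreted as the relevant directed $\mathcal G$-algebra).

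Second, I would check that simplicial homotopy between two based $n$-simplices corresponds precisely to a chain of elementary algebraic homotopies. A simplicial homotopy is given (up to subdivision) by an equivariant map $A\to B^{\sd^k(\Delta^n\times\Delta^1)}$ with the correct face behaviour, and under the standard isomorphism $B^{X\times Y}\cong (B^X)^Y$ this is the same as an equivariant elementary homotopy between the two corresponding maps $A\to B^{S^n}$. This uses exactly the cartesian-closure property of the $\mathcal G$-algebra construction $A\mapsto A^X$ already exploited in section \ref{alghom}, which is just as valid equivariantly because the $\mathcal G$-action on $B^X$ is the pointwise action and commutes with the internal-hom structure in simplicial sets.

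The main obstacle is showing that $\HOM_{\mathcal G}(A,B)$ is a Kan complex, so that its simplicial homotopy groups are defined by the na\"ive formula and the above identifications assemble into a genuine group isomorphism. This is the point where subdivision enters essentially: given a horn $\Lambda^n_i\to\HOM_{\mathcal G}(A,B)$ represented by an equivariant map $A\to B^{\sd^k\Lambda^n_i}$, one must find an extension to $A\to B^{\sd^{k+l}\Delta^n}$ for some $l$. The combinatorial argument is identical to the non-equivariant one in \cite{CoT}: iterated subdivision of $\Delta^n$ contains iterated subdivision of $\Lambda^n_i$ as a subcomplex in such a way that the remaining simplices can be assigned values freely, and one extends by the zero map on those new simplices. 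The crucial observation that makes this work equivariantly, with no extra input, is that the zero homomorphism is $\mathcal G$-equivariant, and that the ambient functors $A\mapsto A^X$, $\ker$, and $\lim_\to$ all preserve $\mathcal G$-equivariance by construction. Once the Kan condition is established, the bijection $[A,B^{S^n}]_{\mathcal G}=\pi_n\HOM_{\mathcal G}(A,B)$ follows from the first two steps.
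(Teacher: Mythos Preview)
Your proposal is correct in outline and matches the paper's approach, which is simply to invoke the argument of Theorem~3.3.2 in \cite{CoT} and observe that every construction involved (the functors $B\mapsto B^X$, kernels, directed colimits, and the subdivision maps) is compatible with the $\mathcal G$-action. Your emphasis on this compatibility, and in particular the remark that the zero map is equivariant, is exactly the point.

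One small inaccuracy: in your description of the horn-filling argument you say one ``extends by the zero map on those new simplices''. This is not quite how the lift is produced. The functor $B^{-}$ is contravariant, so the inclusion $\sd^k\Lambda^n_i\hookrightarrow\sd^{k+l}\Delta^n$ induces a \emph{surjection} $B^{\sd^{k+l}\Delta^n}\twoheadrightarrow B^{\sd^k\Lambda^n_i}$, and what one needs is an algebra section of this surjection, not an extension of a partial function. The actual mechanism in \cite{CoT} is that after sufficient subdivision the inclusion $\sd^k\Lambda^n_i\hookrightarrow\sd^{k+l}\Delta^n$ admits a simplicial \emph{retraction}; applying $B^{-}$ to this retraction gives the desired algebra map $B^{\sd^k\Lambda^n_i}\to B^{\sd^{k+l}\Delta^n}$, and composing with it fills the horn. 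Since this retraction is a map of simplicial sets with trivial $\mathcal G$-action, the induced algebra map is automatically $\mathcal G$-equivariant, so your conclusion that the equivariant case requires no new input stands.
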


A {\em short exact sequence} of $\mathcal
G$-algebras is a sequence of $\mathcal G$-algebras and
equivariant maps
\[
0\rightarrow A\stackrel{i}{\rightarrow} B \stackrel{j}{\rightarrow}
C\rightarrow 0
\]
such that the sequence
\[
0\rightarrow A(a)\stackrel{i}{\rightarrow}
B(a)\stackrel{j}{\rightarrow} C(a)\rightarrow 0
\]
is exact for each object $a\in \Ob ({\mathcal G})$.  A splitting of
a short exact sequence is defined in the obvious way.

We call a short exact sequence
\[
0\rightarrow A\rightarrow B\stackrel{j}{\rightarrow C}\rightarrow 0
\]
{\em $F$-split} if there is an equivariant map {\em of $\mathcal
G$-modules} $s\colon C\rightarrow B$ such that $j\circ s =1_C$.  Such
a map $s$ is called an {\em $F$-splitting}.

\begin{definition}
Let $A$ be a $\mathcal G$-module.  Then we define the {\em tensor
$\mathcal G$-algebra} $A^{\otimes k}$ to be the tensor product of
$A$ with itself $k$ times.  We define the {\em equivariant tensor
algebra}, $TA$, to be the iterated direct sum
\[
TA = \oplus_{k=1}^\infty A^{\otimes k}
\]
\end{definition}

Composition of elements in the tensor $\mathcal G$-algebra is
defined by concatenation of tensors.  Formation of the tensor
$\mathcal G$-algebra defines a functor, $T$ from the category of
$\mathcal G$-modules to the category of $\mathcal G$-algebras.  The
following result is proved in the same way as proposition
\ref{TFadjoint}.

\begin{proposition} \label{TFadjointe}
The functor $T$ is naturally adjoint to the forgetful functor $F$.
\noproof
\end{proposition}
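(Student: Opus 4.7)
The plan is to mimic the proof of Proposition \ref{TFadjoint} essentially verbatim, with the additional bookkeeping needed to track the $\mathcal G$-action. That is, I would construct a natural bijection
\[
\Hom_{\mathcal G}(TA, B) \;\longleftrightarrow\; \Hom_{\mathcal G}(A, FB)
\]
for any $\mathcal G$-module $A$ and $\mathcal G$-algebra $B$, where the left-hand side denotes equivariant $\mathcal G$-algebra maps and the right-hand side equivariant $\mathcal G$-module maps.

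First, given an equivariant $\mathcal G$-algebra map $\alpha \colon TA \to B$, define $G(\alpha) \colon A \to FB$ by restricting $\alpha$ to the first tensor summand $A = A^{\otimes 1} \subseteq TA$. This is $R$-linear and equivariant for the $\mathcal G$-action because $\alpha$ is, and the $\mathcal G$-action on $A^{\otimes 1}$ agrees with that of $A$ by construction of the tensor $\mathcal G$-algebra. Conversely, given an equivariant $\mathcal G$-module map $\beta \colon A \to FB$, define $H(\beta)\colon TA \to B$ object-wise by $\beta$ on objects, and on morphisms by
\[
H(\beta)(x_1 \otimes \cdots \otimes x_k) = \beta(x_1) \cdots \beta(x_k),
\]
extended $R$-linearly. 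That $H(\beta)$ respects composition is built into the definition of composition in $TA$ by concatenation of tensors.

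The one point that requires genuine verification beyond the non-equivariant case is that $H(\beta)$ is actually equivariant. For $g \in \Hom(a,b)_{\mathcal G}$ and a pure tensor $x_1 \otimes \cdots \otimes x_k$, the diagonal $\mathcal G$-action on the tensor product gives $g(x_1 \otimes \cdots \otimes x_k) = g(x_1) \otimes \cdots \otimes g(x_k)$, so
\[
H(\beta)(g(x_1 \otimes \cdots \otimes x_k)) = \beta(g(x_1)) \cdots \beta(g(x_k)) = g(\beta(x_1)) \cdots g(\beta(x_k)) = g\bigl(H(\beta)(x_1 \otimes \cdots \otimes x_k)\bigr),
\]
where the middle equality uses equivariance of $\beta$ and the last uses that each $g$ acts on $B$ by an $R$-algebra homomorphism. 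This is the only step that genuinely uses that $B$ is a $\mathcal G$-algebra rather than merely a $\mathcal G$-module, and it is where the proof might have failed had the $\mathcal G$-action on tensor products been defined differently.

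Finally, the assignments $G$ and $H$ are mutually inverse: starting from $\beta$, restricting $H(\beta)$ to $A^{\otimes 1}$ recovers $\beta$, and starting from $\alpha$, the map $H(G(\alpha))$ agrees with $\alpha$ on tensor generators and hence everywhere since both are algebra homomorphisms. Naturality in $A$ and $B$ is immediate from the formulas. The main obstacle, if any, is just the equivariance check above; everything else is transcribed from Proposition \ref{TFadjoint}.
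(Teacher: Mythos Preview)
Your proposal is correct and follows exactly the approach the paper indicates: the paper simply states that the result is proved in the same way as Proposition~\ref{TFadjoint} and gives no further argument. Your write-up supplies precisely that transcription together with the one additional verification (equivariance of $H(\beta)$ via the diagonal $\mathcal G$-action and the fact that $g$ acts by algebra homomorphisms), which is the only new ingredient in the equivariant setting.
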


Given a $\mathcal G$-algebra $\mathcal A$, there is a canonical
equivariant map $\sigma \colon A\rightarrow TA$ {\em of
$\mathcal G$-modules} defined by mapping each morphism set of the $\mathcal
G$-algebra $A$ onto the first summand.  As we should by now expect,
there is an associated universal property.

\begin{proposition}
Let $\alpha \colon A\rightarrow B$ be an equivariant map between
$\mathcal G$-algebras.  Then there is a unique homomorphism $\varphi
\colon TA\rightarrow B$ such that $\alpha = \varphi \circ \sigma$.
\noproof
\end{proposition}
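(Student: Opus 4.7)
The plan is to deduce this proposition as a direct corollary of the adjunction stated in Proposition \ref{TFadjointe}, mirroring the non-equivariant argument given immediately after Proposition \ref{TFadjoint}. The first step is to recognise that the canonical equivariant $\mathcal G$-module map $\sigma\colon A\rightarrow TA$ is precisely the unit of the adjunction $T\dashv F$ at the $\mathcal G$-module $FA$: this is immediate from the description of $\sigma$ as the inclusion of $A$ as the first summand $A^{\otimes 1}$ of $TA=\bigoplus_{k\geq 1} A^{\otimes k}$.

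The second step is to apply the adjunction. An equivariant $\mathcal G$-algebra map $\alpha\colon A\rightarrow B$ determines, by forgetting the algebra structure on the target, an equivariant $\mathcal G$-module map $F\alpha\colon FA\rightarrow FB$. Under the natural bijection
\[
\Hom_{\mathcal G\text{-}\mathrm{Alg}}(TA,B)\;\cong\;\Hom_{\mathcal G\text{-}\mathrm{Mod}}(FA,FB)
\]
supplied by Proposition \ref{TFadjointe}, the map $F\alpha$ corresponds to a unique equivariant $\mathcal G$-algebra homomorphism $\varphi\colon TA\rightarrow B$, and the triangle identity for the unit $\sigma$ says exactly that $F\varphi\circ\sigma=F\alpha$, which unpacks to the required factorisation $\varphi\circ\sigma=\alpha$.

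As a direct sanity check, one can describe $\varphi$ explicitly on elementary tensors by $\varphi(x_1\otimes\cdots\otimes x_k)=\alpha(x_1)\cdots\alpha(x_k)$ in the appropriate multiplication order, extended $R$-linearly; this manifestly restricts to $\alpha$ on the first summand, and it is equivariant because the $\mathcal G$-action on $A^{\otimes k}$ is diagonal and each $g\in\Hom(a,b)_{\mathcal G}$ acts on $B$ by algebra homomorphisms, so it commutes with the multiplication used to define $\varphi$. Uniqueness is forced, since any $\mathcal G$-algebra homomorphism out of $TA$ is determined by its restriction to $\sigma(A)$, and $\sigma(A)$ generates $TA$ as a $\mathcal G$-algebra.

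I do not expect any genuine obstacle here: the content of the proposition is already packaged in Proposition \ref{TFadjointe}, and this statement is merely its translation into the language of a universal property for $\sigma$. The only point requiring even minor care is matching conventions when writing the multiplication $\alpha(x_1)\cdots\alpha(x_k)$ in the correct order (the non-equivariant predecessor writes it in reverse), but that is a cosmetic issue that does not affect existence or uniqueness.
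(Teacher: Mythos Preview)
Your proposal is correct and matches the paper's approach: the paper gives no proof in the equivariant case (it is marked \noproof), and the non-equivariant analogue is proved by writing down exactly the explicit formula $\varphi(x_1\otimes\cdots\otimes x_n)=\alpha(x_n)\cdots\alpha(x_1)$ that you include as your sanity check. Framing it first through the adjunction of Proposition~\ref{TFadjointe} is a mild repackaging rather than a different argument, and your remark about the multiplication order being a cosmetic convention is accurate.
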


We can thus define a $\mathcal G$-algebra $JA$ by writing
\[
JA(a) = \ker \pi \colon TA(a)\rightarrow A(a)
\]
for each object $a\in \Ob ({\mathcal G})$.  The $\mathcal G$-action
is inherited from the tensor $\mathcal G$-algebra.  There is a
natural short exact sequence
\[
0\rightarrow J{A} \hookrightarrow T{A} \stackrel{\pi}{\rightarrow}
{A}\rightarrow 0
\]
with $F$-splitting $\sigma \colon {A}\rightarrow T{A}$.  The
following result is proved in the same way as theorem \ref{ufsplit}.

\begin{theorem} \label{ufsplite}
Let
$$0\rightarrow { I}\stackrel{i}{\rightarrow} { E} \stackrel{j}{\rightarrow} { A}\rightarrow 0$$
be an $F$-split short exact sequence.  Then we have a natural
homomorphism $\gamma \colon J{ A}\rightarrow { I}$ fitting into a
commutative diagram
$$\begin{array}{ccccccccc}
0 & \rightarrow & J{ A} & \rightarrow & T{ A} & \rightarrow & { A} & \rightarrow & 0 \\
& & \downarrow & & \downarrow & & \| & & \\
0 & \rightarrow & { I} & \rightarrow & { E} & \rightarrow & { A} & \rightarrow & 0 \\
\end{array}$$
\noproof
\end{theorem}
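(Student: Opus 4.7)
The plan is to mimic directly the argument of Theorem \ref{ufsplit}, substituting the equivariant adjunction of Proposition \ref{TFadjointe} for its non-equivariant counterpart. Since the $F$-splitting $s\colon A\to E$ is an equivariant map of $\mathcal{G}$-modules (not of $\mathcal{G}$-algebras), the strategy is to lift $s$ along the universal equivariant $\mathcal{G}$-module map $\sigma\colon A\to TA$ to an honest equivariant algebra homomorphism, and then restrict to kernels.

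First I would apply Proposition \ref{TFadjointe} to the $\mathcal{G}$-module map $s$ to obtain a canonical equivariant $\mathcal{G}$-algebra homomorphism $H(s)\colon TA\to E$ characterised by $H(s)\circ\sigma = s$. Next I would verify that $j\circ H(s)=\pi$: both are equivariant algebra homomorphisms $TA\to A$, and precomposing with $\sigma$ gives $j\circ s = 1_A$ on the left and $\pi\circ\sigma = 1_A$ on the right (the latter by construction of $\pi$). The uniqueness clause in Proposition \ref{TFadjointe} therefore forces $j\circ H(s)=\pi$. In particular $H(s)$ sends $JA=\ker\pi$ into $\ker j = I$, so restriction yields the desired equivariant map $\gamma\colon JA\to I$. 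The square commutes by construction, and naturality is inherited from naturality of the adjunction $H$ and of the universal extension $0\to JA\to TA\to A\to 0$.

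I do not expect any real obstacle; the content of the theorem is already packaged in the equivariant adjunction of Proposition \ref{TFadjointe} and in the definitions of $TA$ and $JA$ in the groupoid setting. The only point requiring minor care is checking that the identification $JA(a)=\ker(\pi\colon TA(a)\to A(a))$ at each object $a\in\Ob(\mathcal{G})$ is compatible with the $\mathcal{G}$-action, so that the restriction of $H(s)$ genuinely defines an equivariant homomorphism of $\mathcal{G}$-algebras. This follows because $H(s)$ and $\pi$ are both equivariant, hence their kernel-preserving restriction to the equivariant subalgebra $JA$ is equivariant as well.
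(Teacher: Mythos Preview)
Your proposal is correct and follows exactly the approach the paper indicates: the paper simply states that Theorem \ref{ufsplite} is proved in the same way as Theorem \ref{ufsplit}, and your argument is precisely that proof transported to the equivariant setting via Proposition \ref{TFadjointe}. If anything, you are slightly more careful than the paper, in that you explicitly verify $j\circ H(s)=\pi$ by the uniqueness part of the adjunction before restricting to kernels.
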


As before, the homomorphism $\gamma$ is called the {\em classifying
map} of the short exact sequence.

The last construction we need to define an equivariant $KK$-theory
spectrum is an equivariant version of the path extension.
Analogously to the algebroid case, we can write $\Omega {A} =
{A}^{S^1 ,+}$ and obtain an $F$-split short exact sequence
$$0\rightarrow \Omega {A} \rightarrow {A}^{\Delta^1} \stackrel{(e_0,e_1)}{\rightarrow} {A}\oplus {A}\rightarrow 0$$

As before, a diagram chase enables us to construct a natural classifying map $\rho \colon JA \rightarrow \Omega
A$.

\begin{definition}
Let $f\colon {A}\rightarrow {B}$ be an equivariant map of $\mathcal
G$-algebras.  Then the {\em path algebra} of $f$ is the unique
$\mathcal G$-algebra $P{B}\oplus_{B}{A}$ fitting into a commutative
diagram
$$\begin{array}{ccccc}
\Omega {B} & \rightarrow & P{B}\oplus_{B}{A} & \rightarrow & {A} \\
\| & & \downarrow & & \downarrow \\
\Omega {B} & \rightarrow & P{B} & \rightarrow & {B} \\
\end{array}$$
where the bottom row is the path extension, and the upper row is a
short exact sequence.
\end{definition}

It is straightforward to check that the path algebra is
well-defined, and that the upper row in the above diagram has a
natural $F$-splitting.  There is therefore a naturalmap
$\eta (f)\colon J{A}\rightarrow \Omega {B}$.

We can compose with the map $i\colon \Omega {A}\rightarrow
{A}^{S^1}$ to obtain a map $\eta (f) \colon J{A} \rightarrow
{B}^{S^1}$.

\begin{definition}
We call the above equivariant map $\eta (f)$ the {\em classifying
map} of the equivariant map $f$.
\end{definition}

Given a $\mathcal G$-algebra $A$, let us write $M_\infty (A)$ to
denote the direct limit of the $\mathcal G$-algebras $M_n (A)=
A\otimes_R M_n (R)$ under the inclusions
$$x\mapsto \left( \begin{array}{cc}
x & 0 \\
0 & 0 \\ \end{array} \right)$$

Consider an equivariant map $\alpha \colon A\rightarrow M_\infty
(B^{S^n})$, where $A$ and $B$ are $\mathcal G$-algebras.  Then by
the above construction, there is an induced classifying map $\eta
(\alpha )\colon J{A} \rightarrow M_\infty ({B}^{S^{n+1}})$.

\begin{definition}
We define $\KK_{\mathcal G}({A},{B})$ to be the symmetric spectrum
with sequence of spaces $\HOM_{\mathcal G} (J^{2n} {A},M_\infty
({B}^{S^n}))$.  The structure map
\[
\epsilon \colon \HOM_{\mathcal G} (J^{2n} {A},M_\infty
({B}^{S^n}))\rightarrow \Omega \HOM_{\mathcal G} (J^{2n+2}
{A},M_\infty ({B}^{S^{n+1}})) \cong \HOM_{\mathcal G} (J^{2n+2}
{A},M_\infty ({B}^{S^{n+2}}))
\]
is defined by applying the above classifying map construction twice,
that is to say writing $\epsilon (\alpha ) = \eta (\eta (\alpha ))$
whenever $\alpha \in \HOM (J^{2n} {A}, M_\infty ({B}^{S^{n+1}}))$.

The action of the permutation group $S_n$ is induced by its
canonical action on the simplicial sphere $S^n$.
\end{definition}

The product is constructed as for $R$-algebroids, in the
non-equivariant case.

\begin{definition} \label{Shprodalge}
Let $A$, $B$, and $C$ be $\mathcal G$-algebras.  Let $\alpha \in
\HOM_{\mathcal G} (J^{2m} {A}, M_\infty ({B}^{S^m}))$ and $\beta \in
\HOM_{\mathcal G} (J^{2n} {B},M_\infty ({C}^{S^n}))$.  Then we
define the product $\alpha \sharp \beta$ to be the composition
\[
J^{2m+2n}{A}\stackrel{J^{2n} \alpha}{\rightarrow} J^{2n}(M_\infty
({B}^{S^m}) \rightarrow (J^{2n}M_\infty (B))^{S^m}
\stackrel{\beta}{\rightarrow} M_\infty ({C})^{S^{m+n}}
\]
\end{definition}

\begin{theorem} \label{KKprodalge}
Let $A$, $B$, and $C$ be $\mathcal G$-algebras.  Then there is a
natural map of spectra
\[
\KK_{\mathcal G}({A},{B}) \wedge \KK_{\mathcal G}({B},{C})
\rightarrow \KK_{\mathcal G}({A},{C})
\]
defined by the formula
\[
\alpha \wedge \beta \mapsto \alpha \sharp \beta \qquad \alpha \in
HOM_{\mathcal G} (J^{2m} {A}, M_\infty ({B}^{S^m}) ),\ \beta \in
\HOM_{\mathcal G} (J^{2n} {B}, M_\infty ({C}^{S^n}) )
\]

Further, the above product is associative in the usual sense.  Given
equivariant maps $\alpha \colon {A}\rightarrow {B}$ and $\beta
\colon {B}\rightarrow {C}$, we have the formula $\alpha \sharp \beta
= \beta \circ \alpha$. \noproof
\end{theorem}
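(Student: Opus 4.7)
The plan is to follow the proof of Theorem \ref{KKprodalg} in the equivariant setting essentially line by line, replacing $R$-algebroids by $\mathcal G$-algebras, additive completion $(-)_\oplus$ by $M_\infty(-)$, and $\HOM$ by $\HOM_{\mathcal G}$. The only preliminary ingredient not already spelled out in the equivariant section is the analog of the map $s$ used in Definition \ref{Shprodalg}: applying Theorem \ref{ufsplite} to the equivariant $F$-split short exact sequence
\[
0 \to (JA)^{S^1} \to (TA)^{S^1} \to A^{S^1} \to 0
\]
produces a natural equivariant classifying map $J(A^{S^1}) \to (JA)^{S^1}$, and iterating this gives $s\colon J^k(A^{S^l}) \to (J^k A)^{S^l}$.

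With this in hand, I would first promote Definition \ref{Shprodalge} to a map on mapping spaces: the assignment $(\alpha,\beta)\mapsto \alpha\sharp\beta$ gives, for each pair $(m,n)$, a natural continuous $\Sigma_m\times\Sigma_n$-equivariant map
\[
\HOM_{\mathcal G}(J^{2m}A, M_\infty(B^{S^m}))\wedge \HOM_{\mathcal G}(J^{2n}B, M_\infty(C^{S^n})) \to \HOM_{\mathcal G}(J^{2m+2n}A, M_\infty(C^{S^{m+n}})).
\]
Compatibility with the structure maps of the two spectra reduces, as in the algebroid case, to a commutative square expressing naturality of the classifying map $\eta$ with respect to $J^2$ and $s$; applying $\eta^2$ to $\alpha$ before forming the product should agree with applying $\eta^2$ to $\alpha\sharp\beta$ afterwards, which is the equivariant mirror of the central commutative diagram in the proof of Theorem \ref{KKprodalg}.

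Associativity is then verified by drawing the tall commutative diagram used in the proof of Theorem \ref{KKprodalg}: both $(\alpha\sharp\beta)\sharp\gamma$ and $\alpha\sharp(\beta\sharp\gamma)$ appear as the two boundary paths of a rectangle whose interior squares commute by naturality of $s$ and of $\eta$, together with the fact that iterated copies of $s$ interact coherently with the $F$-splittings. For honest equivariant homomorphisms $\alpha\colon A\to B$ and $\beta\colon B\to C$, the formula $\alpha\sharp\beta = \beta\circ\alpha$ is the degenerate case $m=n=0$ of Definition \ref{Shprodalge}, in which the iterates of $J$ and the map $s$ are all trivial.

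The main technical obstacle is the compatibility with the structure maps. In the non-equivariant case this rests on naturality of $\eta$ and an appropriate commutation between $s$ and $\eta$; both properties transfer to the equivariant setting via Theorem \ref{ufsplite} applied objectwise over $\mathcal G$, but one must check that each relevant $F$-splitting is genuinely $\mathcal G$-equivariant (which it is, since the splittings in the path and universal extensions are constructed by canonical formulas preserved by every morphism of $\mathcal G$) and that passage to $M_\infty(-)$ respects the groupoid action. Once these checks are in place, the remainder is formal diagram chasing, identical to the proof of Theorem \ref{KKprodalg}.
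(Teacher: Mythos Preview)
Your proposal is correct and matches the paper's approach exactly: the paper states this theorem with no proof, having remarked just before Definition~\ref{Shprodalge} that ``the product is constructed as for $R$-algebroids, in the non-equivariant case,'' so the intended argument is precisely the line-by-line translation of Theorem~\ref{KKprodalg} that you outline. Your identification of the equivariant $s$-map via Theorem~\ref{ufsplite} and the check that the canonical $F$-splittings are $\mathcal G$-equivariant are the only points requiring comment, and you have handled both correctly.
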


Just as in the non-equivariant case, the following result follows
from our constructions.

\begin{corollary}
Let $A$ be a $\mathcal G$-algebra.  Then the spectrum $\KK_{\mathcal
G} ({A},{A})$ is a symmetric ring spectrum.

Let $B$ be another $\mathcal G$-algebra.  Then the spectrum
$\KK_{\mathcal G} (A,B)$ is a symmetric $\KK_{\mathcal G} (R ,R
)$-module spectrum. \noproof
\end{corollary}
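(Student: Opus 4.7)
The plan is to translate the non-equivariant argument that established the analogous statement for $R$-algebroids into the $\mathcal G$-equivariant setting, using Theorem \ref{KKprodalge} as the equivariant replacement for Theorem \ref{KKprodalg}. The multiplication on $\KK_{\mathcal G}(A,A)$ is the composition product supplied by Theorem \ref{KKprodalge} with $A=B=C$, and its associativity is the corresponding clause of that theorem. The unit map from the sphere spectrum is represented, in simplicial degree zero, by the class of the identity homomorphism $1_A \in \HOM_{\mathcal G}(A, M_\infty(A))$; the unit axiom $1_A \sharp \alpha = \alpha \circ 1_A = \alpha$ is again part of Theorem \ref{KKprodalge}. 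The symmetric structure, namely the $\Sigma_n$-equivariance of the structure maps making this a symmetric ring spectrum rather than a ring spectrum in the naive sense, is built into the definition of $\KK_{\mathcal G}(A,A)$ exactly as in the non-equivariant case.

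For the module structure, the first task is to produce an equivariant analogue of Proposition \ref{dmapalg}: a natural map
\[
\Delta \colon \KK_{\mathcal G}(A,B) \to \KK_{\mathcal G}(A \otimes_R C, B \otimes_R C)
\]
compatible with the composition product. The construction is formally identical to the non-equivariant one: given $\alpha \colon J^{2n} A \to M_\infty(B^{S^n})$, one tensors with $C$ and precomposes with iterates of the classifying map $J(A \otimes_R C) \to JA \otimes_R C$ coming from the $F$-split short exact sequence $0 \to JA \otimes_R C \to TA \otimes_R C \to A \otimes_R C \to 0$, whose $F$-splitting is $\sigma \otimes 1_C$. Specialising to $C=A$ with source $\mathcal G$-algebra equal to $R$ yields $\Delta \colon \KK_{\mathcal G}(R,R) \to \KK_{\mathcal G}(A,A)$, and the module action
\[
\KK_{\mathcal G}(R,R) \wedge \KK_{\mathcal G}(A,B) \to \KK_{\mathcal G}(A,A) \wedge \KK_{\mathcal G}(A,B) \to \KK_{\mathcal G}(A,B)
\]
is obtained by smashing $\Delta$ with the identity and then applying the composition product. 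Associativity of this action and compatibility with the ring structure on $\KK_{\mathcal G}(R,R)$ follow from the compatibility of $\Delta$ with the product and the associativity of the latter.

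The main obstacle is the combinatorial bookkeeping required to verify that these structure maps are strictly compatible with the $\Sigma_n$-actions on the iterated $J^{2n}$ and $S^n$ constructions, so that one obtains a genuine symmetric ring spectrum and a genuine symmetric module spectrum rather than merely their up-to-homotopy analogues. These checks are formal but must be performed carefully within the framework of \cite{HSS}, just as in the non-equivariant case.
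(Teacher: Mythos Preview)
Your proposal is correct and matches the paper's intended argument. The paper itself gives no proof here (the corollary is marked \noproof and prefaced only by ``Just as in the non-equivariant case, the following result follows from our constructions''), so your write-up is in fact more explicit than the original: you correctly identify that the ring structure comes from Theorem \ref{KKprodalge} and that the module structure requires an equivariant analogue of Proposition \ref{dmapalg}, which the paper tacitly assumes but never states separately.
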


Let $\theta \colon {\mathcal G}\rightarrow {\mathcal H}$ be a
functor between groupoids, and let $A$ be an $\mathcal H$-algebra.
Abusing notation, we can also regard $A$ as a $\mathcal G$-algebra;
we write $A(a)=A(\theta (a))$ for each object $a\in \Ob( {\mathcal
G})$, and define a homomorphism $g=\theta (g) \colon A(\theta
(a))\rightarrow A(\theta (b))$ for each morphism $g\in \Hom
(a,b)_{\mathcal G}$.

There is an induced map $\theta^\ast \colon \KK_{\mathcal
H}(A,B)\rightarrow \KK_{\mathcal G}(A,B)$ defined by the observation
that any $\mathcal H$-equivariant map is also $\mathcal
G$-equivariant.

\begin{definition}
We call the map $\theta^\ast \colon \KK_{\mathcal
H}(A,B)\rightarrow \KK_{\mathcal G}(A,B)$ {\em the restriction map.}
\end{definition}

\begin{proposition} \label{restriction2}
Let $\theta \colon {\mathcal G}\rightarrow {\mathcal H}$ be a
functor between groupoids, and let $A$ and $B$ be $\mathcal
H$-algebras. Then the restriction map $\theta^\ast
\colon \KK_{\mathcal H}(A,B)\rightarrow \KK_{\mathcal G}(A,B)$ is compatible with the product in the sense that we
have a commutative diagram
\[
\begin{array}{ccc}
\KK_{\mathcal H}(A,B)\wedge \KK_{\mathcal H}(B,C) & \rightarrow & \KK_{\mathcal H}(A,C) \\
\downarrow & & \downarrow \\
\KK_{\mathcal G}(A,B)\wedge \KK_{\mathcal G}(B,C) & \rightarrow & \KK_{\mathcal G}(A,C) \\
\end{array}
\]
where the horizontal map is defined by the product and the vertical
maps are restriction maps.
\end{proposition}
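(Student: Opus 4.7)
The plan is to unwind both sides of the claimed commutative square to the level of the representing maps defined in Definition \ref{Shprodalge} and then observe that restriction along $\theta$ is transparent to every construction involved.

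First I would note that, by construction, the space $\HOM_{\mathcal G}(J^{2m}A, M_\infty(B^{S^m}))$ is a subset of $\HOM_{\mathcal G'}(J^{2m}A, M_\infty(B^{S^m}))$ whenever $\mathcal G'$ is obtained by forgetting structure along $\theta \colon \mathcal G \to \mathcal H$: an $\mathcal H$-equivariant map is automatically $\mathcal G$-equivariant once $\mathcal G$ acts through $\theta$. In particular the restriction map $\theta^\ast$ is, at the level of representing maps of the $KK$-spectra, literally the inclusion of $\mathcal H$-equivariant maps into $\mathcal G$-equivariant ones. This is compatible with the simplicial structure coming from subdivision, the loop structure, the $J$-construction and the stabilisation by $M_\infty$, since all of those functors are built out of tensor products and kernels in the category of modules with no reference to the particular groupoid action.

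Next I would consider representatives $\alpha \in \HOM_{\mathcal H}(J^{2m}A, M_\infty(B^{S^m}))$ and $\beta \in \HOM_{\mathcal H}(J^{2n}B, M_\infty(C^{S^n}))$ and examine the product $\alpha \sharp \beta$ as defined in Definition \ref{Shprodalge}, namely the composition
\[
J^{2m+2n} A \xrightarrow{J^{2n}\alpha} J^{2n}(M_\infty(B^{S^m})) \to (J^{2n}M_\infty(B))^{S^m} \xrightarrow{\beta} M_\infty(C)^{S^{m+n}}.
\]
Each arrow in this composition is functorially associated to $\alpha$ or $\beta$: the functors $J$, $M_\infty$, $(-)^{S^m}$, and the natural transformation $s\colon J(-^{S^m}) \to (J-)^{S^m}$ from the analogue of the relevant proposition in the equivariant setting are all defined algebraically and commute with restriction along $\theta$. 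Consequently, viewing $\alpha$ and $\beta$ as $\mathcal G$-equivariant first and then forming the product yields exactly the same composite as forming the $\mathcal H$-equivariant product and then restricting. In other words, $\theta^\ast(\alpha \sharp_{\mathcal H} \beta) = \theta^\ast(\alpha) \sharp_{\mathcal G} \theta^\ast(\beta)$ on the nose at the level of representatives.

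Finally I would assemble this into the stated commutative diagram. Since the smash product in the category of symmetric spectra is determined by its behaviour on representing simplicial maps, and both compositions $\KK_{\mathcal H}(A,B) \wedge \KK_{\mathcal H}(B,C) \to \KK_{\mathcal G}(A,C)$ agree on such representatives by the above, the diagram commutes. There is no genuine obstacle here; the only point one needs to be slightly careful with is that the natural map $s$ and the classifying-map construction $\eta$ used inside $\sharp$ are natural transformations of functors between module categories, so they are unchanged by remembering less of the equivariance. Once that is recorded, the verification is a routine unwinding of Definitions \ref{Shprodalge} and of the restriction map.
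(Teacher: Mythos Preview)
Your proposal is correct and follows essentially the same approach as the paper: both observe that $\theta^\ast$ is, on representing maps, just the inclusion of $\mathcal H$-equivariant maps into $\mathcal G$-equivariant ones, and that every ingredient of the product $\sharp$ (the functors $J$, $M_\infty$, $(-)^{S^m}$, and the natural map $s$) is insensitive to which groupoid action one records. The paper's proof is in fact terser than yours --- after recalling how to view $A$ and $B$ as $\mathcal G$-algebras and noting that $\mathcal H$-equivariant maps are automatically $\mathcal G$-equivariant, it simply declares the compatibility ``straightforward to check'' --- so your unwinding of Definition~\ref{Shprodalge} is a more explicit version of the same argument.
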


\begin{proof}
Abusing notion, we can also regard the $\mathcal H$-algebra $A$ as a
$\mathcal G$-algebra; we write $A(a)=A(\theta (a))$ for each object
$a\in \Ob( {\mathcal G})$, and define a homomorphism $g=\theta (g)
\colon A(\theta (a))\rightarrow A(\theta (b))$ for each morphism
$g\in \Hom (a,b)_{\mathcal G}$.

We similarly regard the $\mathcal H$-algebra $B$ as a $\mathcal
G$-algebra.  An $\mathcal H$-equivariant map $\alpha \colon
J^n{\mathcal A}\rightarrow M_\infty ({\mathcal B}^{S^n})$ is also
$\mathcal G$-equivariant.  We use this construction to define our
map $\theta^\ast \colon \KK_{\mathcal H}(A,B)\rightarrow
\KK_{\mathcal G}(A,B)$.

The result is now straightforward to check.
\end{proof}

Given a $\mathcal G$-algebra $A$, we define the {\em convolution
algebroid}, $A{\mathcal G}$, to be the algebroid with the same set
of objects as the groupoid $\mathcal G$, and morphism sets
\[
\Hom (a,b)_{A{\mathcal G}} = \{ \sum_{i=1}^m x_i g_i \ |\ x_i \in
A(b), g_i \in \Hom (a,b)_{\mathcal G},\ m\in \N \}
\]

Composition of morphisms is defined by the formula
\[
\left( \sum_{i=1}^m x_i g_i \right) \left( \sum_{j=1}^n y_j h_i
\right) = \sum_{i,j=1}^{m,n} x_i g_i(y_j) g_i h_j
\]

\begin{theorem} \label{descentealg}
Let $\mathcal G$ be a groupoid, and let $A$ and $B$ be $\mathcal
G$-algebras.  Then there is a map
\[
D\colon \KK_{\mathcal G} (A,B)\rightarrow \KK(A{\mathcal G},
B{\mathcal G})
\]
which is compatible with the product in the sense that we have a
commutative diagram
\[
\begin{array}{ccc}
\KK_{\mathcal G}({A},{B})\wedge \KK_{\mathcal G}({B},{C}) & \rightarrow & \KK (A,{C}) \\
\downarrow & & \downarrow \\
\KK ({A}{\mathcal G},{B}{\mathcal G})\wedge \KK ({B}{\mathcal G},{C}{\mathcal G}) & \rightarrow & \KK ({A}{\mathcal G},{C}{\mathcal G}) \\
\end{array}
\]
where the horizontal maps are defined by the product.
\end{theorem}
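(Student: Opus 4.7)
The plan is to construct $D$ directly on representatives, using naturality of the convolution construction $A \mapsto A\mathcal{G}$ with respect to the universal constructions that go into the definition of $\KK_{\mathcal{G}}$. First I would record three naturality facts. For any simplicial set $X$, since $A^X$ is formed by tensoring with simplicial rings carrying the trivial $\mathcal{G}$-action, there is a natural isomorphism $(A^X)\mathcal{G} \cong (A\mathcal{G})^X$ of $R$-algebroids. Second, applying $-\mathcal{G}$ to the universal extension of a $\mathcal{G}$-algebra $A$ yields a natural $F$-split short exact sequence of $R$-algebroids
\[ 0 \to (JA)\mathcal{G} \to (TA)\mathcal{G} \to A\mathcal{G} \to 0, \]
so Theorem \ref{ufsplit} supplies a natural classifying map $J(A\mathcal{G}) \to (JA)\mathcal{G}$, and iteration gives $J^k(A\mathcal{G}) \to (J^k A)\mathcal{G}$ for all $k$. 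Third, since $M_n(A) = A \otimes_R M_n(R)$ and $M_n(R)$ carries the trivial $\mathcal{G}$-action, we have $(M_n A)\mathcal{G} \cong M_n(A\mathcal{G})$, and the stabilised matrix algebra embeds naturally into the additive completion, giving $(M_\infty A)\mathcal{G} \to (A\mathcal{G})_\oplus$.

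Second, given an equivariant representative $\alpha\colon J^{2n} A \to M_\infty(B^{S^n})$ of a point in the $n$-th space of $\KK_{\mathcal{G}}(A,B)$, I would define $D(\alpha)$ as the composite
\[ J^{2n}(A\mathcal{G}) \to (J^{2n} A)\mathcal{G} \xrightarrow{\alpha\mathcal{G}} (M_\infty(B^{S^n}))\mathcal{G} \to ((B\mathcal{G})^{S^n})_\oplus, \]
using the three natural transformations above. This is a representative for a point in the $n$-th space of $\KK(A\mathcal{G}, B\mathcal{G})$, and the assignment is natural in $\alpha$. Compatibility with the structure maps $\epsilon$ comes down to the claim that $D(\eta(\alpha)) = \eta(D(\alpha))$ up to the stated identifications: the equivariant path extension of $B$ descends to the non-equivariant path extension of $B\mathcal{G}$ (again because the simplicial ring $\Z^{\Delta^1}$ is $\mathcal{G}$-trivial), so the classifying maps defined in Section \ref{path} agree under convolution.

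Third, for compatibility with the product I would verify that $-\mathcal{G}$ intertwines the sharp products of Definitions \ref{Shprodalg} and \ref{Shprodalge}. Each side is assembled from three ingredients: functoriality of $J^{2n}$, the swap map $s\colon J^k(C^{S^l}) \to (J^k C)^{S^l}$, and post-composition with $\beta$. The swap $s$ in both the equivariant and non-equivariant settings is, by construction, the classifying map of the $F$-split sequence $0 \to (JC)^{S^l} \to (TC)^{S^l} \to C^{S^l} \to 0$, so it is natural under descent by the same argument used for step two. Pasting the resulting commutative squares along the composite gives the required commutative diagram at the level of spectra.

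The main obstacle is the final product compatibility. Both sharp products stack together several instances of the classifying-map construction, and each such instance involves a choice of $F$-splitting and the adjunction of Proposition \ref{TFadjoint}; I need to verify that these choices are made in a $\mathcal{G}$-equivariant way that becomes, after applying $-\mathcal{G}$, the very splittings used on the non-equivariant side. In practice this is a bookkeeping exercise, but it must be carried out coherently in the indices $m,n$ so that the diagram in the statement commutes strictly (or at least up to a homotopy compatible with symmetric-spectrum structure) rather than merely on $\pi_0$.
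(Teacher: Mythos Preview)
Your proposal is correct and follows essentially the same route as the paper: define $D(\alpha)$ as the composite of the iterated classifying map $J^{2n}(A\mathcal{G})\to (J^{2n}A)\mathcal{G}$, the induced map $\alpha_\ast$ (your $\alpha\mathcal{G}$), and the comparison $M_\infty(B^{S^n})\mathcal{G}\to (B\mathcal{G})^{S^n}_\oplus$. The paper simply asserts that ``the relevant naturality properties are easy to check'' where you have (rightly) unpacked the structure-map and product compatibilities and identified the bookkeeping with the $F$-splittings as the place where care is needed.
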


\begin{proof}
Let $\alpha \colon J^{2n}A\rightarrow M_\infty (B^{S^n})$ be an
equivariant map.  Then we have a functorially induced homomorphism
$\alpha_\ast \colon (J^{2n}A){\mathcal G} \rightarrow M_\infty
(B^{S^n}) {\mathcal G}$ defined by writing
$$\alpha_\ast (\sum_{i=1}^n x_i g_i ) = \sum_{i=1}^n \alpha (x_i) g_i$$

We have a natural homomorphism $\gamma \colon J(A{\mathcal
G})\rightarrow (JA) {\mathcal G}$ defined as the classifying map of
the diagram
\[
\begin{array}{ccccccccc}
& & & & & & {A}{\mathcal G} \\
& & & & & & \| \\
0 & \rightarrow & (J{A}){\mathcal G} & \rightarrow & (T{A}){\mathcal G} & \rightarrow & {A}{\mathcal G} & \rightarrow & 0 \\
\end{array}
\]

For any $\mathcal G$-algebra $C$, we can regard the $\mathcal
G$-algebra $M_\infty ({C})$ as the tensor product ${C}\otimes_R
M_\infty (R)$.  It follows that there is an obvious homomorphism
$\beta \colon M_\infty (B^{S^n}){\mathcal G} \rightarrow M_\infty
(B{\mathcal G})^{S^n}$.

We thus have a map $D\colon \KK_{\mathcal G} (A,B)\rightarrow \KK
(A {\mathcal G}, B{\mathcal G})$ defined by writing $D (\alpha ) = \beta \circ \alpha_\ast \circ \gamma^{2n}$. The relevant
naturality properties are easy to check.
\end{proof}

\begin{corollary}
Let $\mathcal G$ be a discrete groupoid, and let $A$ and $B$ be
$\mathcal G$-algebras.  Then the spectrum $\KK (A{\mathcal G},
B{\mathcal G})$ is a symmetric $\KK_{\mathcal G}(R ,R )$-module
spectrum. \noproof
\end{corollary}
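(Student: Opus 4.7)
The plan is to construct a homomorphism of symmetric ring spectra
\[
\KK_{\mathcal G}(R,R)\longrightarrow \KK(A{\mathcal G},A{\mathcal G})
\]
and then to pull back along it the left $\KK(A{\mathcal G},A{\mathcal G})$-module structure on $\KK(A{\mathcal G},B{\mathcal G})$ that Theorem~\ref{KKprodalg} supplies via the Kasparov product.

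For the first ingredient I will use the equivariant analogue of Proposition~\ref{dmapalg}: for any $\mathcal G$-algebras $X$, $Y$, $C$, tensoring with $C$ yields a natural map $\Delta_{\mathcal G}\colon \KK_{\mathcal G}(X,Y)\to \KK_{\mathcal G}(X\otimes_R C,Y\otimes_R C)$ compatible with the Kasparov product.  The construction is a word-for-word transcription of Proposition~\ref{dmapalg}, using Theorem~\ref{ufsplite} applied to the $F$-split sequence $0\to (JX)\otimes_R C\to (TX)\otimes_R C\to X\otimes_R C\to 0$; this is precisely the result implicit in the preceding corollary giving the $\KK_{\mathcal G}(R,R)$-module structure on $\KK_{\mathcal G}(A,B)$.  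Specialising to $X=Y=R$ and $C=A$, the compatibility with the product makes $\Delta_{\mathcal G}\colon \KK_{\mathcal G}(R,R)\to \KK_{\mathcal G}(A,A)$ a morphism of symmetric ring spectra.

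For the second ingredient, Theorem~\ref{descentealg}, with its second and third $\mathcal G$-algebras both taken to be $A$, provides a descent $D\colon \KK_{\mathcal G}(A,A)\to \KK(A{\mathcal G},A{\mathcal G})$ whose compatibility with the Kasparov product is exactly the statement that $D$ is a homomorphism of symmetric ring spectra.  The composite $D\circ \Delta_{\mathcal G}$ then exhibits $\KK(A{\mathcal G},A{\mathcal G})$ as a $\KK_{\mathcal G}(R,R)$-algebra.  Pulling back along $D\circ \Delta_{\mathcal G}$ the $\KK(A{\mathcal G},A{\mathcal G})$-module structure on $\KK(A{\mathcal G},B{\mathcal G})$ from Theorem~\ref{KKprodalg} then gives the claimed $\KK_{\mathcal G}(R,R)$-module structure; the module axioms follow from associativity of the Kasparov product together with the multiplicativity of $\Delta_{\mathcal G}$ and of $D$.

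The only step that is more than formal bookkeeping is the verification of the equivariant Proposition~\ref{dmapalg} at the level of symmetric spectra rather than merely on homotopy groups, in particular the equivariant classifying map $\gamma\colon J(X\otimes_R C)\to (JX)\otimes_R C$ and the checks that the resulting map on $\KK$-spectra is equivariant under the permutation group actions and compatible with the structure maps.  This is however a routine transcription of the non-equivariant arguments of Section~6 into the language of the equivariant section, with no new ideas required.
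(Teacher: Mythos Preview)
Your proposal is correct and coincides with the paper's intended argument: the corollary is stated without proof because it is meant to follow immediately from the descent map of Theorem~\ref{descentealg} together with the $\KK_{\mathcal G}(R,R)$-module structure on $\KK_{\mathcal G}(A,B)$ established in the corollary to Theorem~\ref{KKprodalge}, and you have simply unpacked these two ingredients (including the implicit equivariant analogue of Proposition~\ref{dmapalg}) in the natural way.
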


Let $\mathcal G$ be a discrete groupoid, and let $A$ and $B$ be
$\mathcal G$-algebras.  Then we can define groups
$$KK_p^{\mathcal G} ({A},{B}) := \pi_p \KK_{\mathcal G} ({A},{B})$$

The following result is proved in the same way as theorem \ref{les}.

\begin{theorem} \label{les2}
Let
$$0\rightarrow {A}\rightarrow {B}\rightarrow {C}\rightarrow 0$$
be an $F$-split short exact sequence of $\mathcal G$-algebras.  Let
$D$ be an $R$-algebroid.  Then we have natural maps $\partial \colon
KK_p^{\mathcal G} ({A},{D})\rightarrow KK_{p+1}^{\mathcal
G}({C},{D})$ inducing a long exact sequence of $KK$-theory groups
$$\rightarrow KK_p^{{\mathcal G}} ({C},{D})\rightarrow KK_p^{{\mathcal G}} ({B},{D})
\rightarrow KK_p^{{\mathcal G}} ({A},{D})\stackrel{\partial}{\rightarrow}
KK_{p+1}^{{\mathcal G}} ({C},{D})\rightarrow$$
\noproof
\end{theorem}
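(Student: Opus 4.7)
The plan is to transpose the proof of Theorem~\ref{les} (and thus section~6.34 of \cite{CoT}) into the equivariant setting; since every construction we have set up for $\mathcal G$-algebras (the functors $J$, $T$, $\Omega$, the path algebra, and the classifying map of Theorem~\ref{ufsplite}) is the exact equivariant analogue of the one used in the non-equivariant proof, the argument carries over by keeping track of the $\mathcal G$-action throughout. I first define the boundary map $\partial$: given the $F$-splitting of $0\to A\to B\to C\to 0$, Theorem~\ref{ufsplite} produces a natural classifying homomorphism $\gamma\colon JC\to A$. The spectrum-level identification used in section~\ref{KKg} gives $KK_{p+1}^{\mathcal G}(C,D)\cong KK_p^{\mathcal G}(JC,D)$, and under this isomorphism I set $\partial[\alpha]=[\gamma]\cdot[\alpha]$, where the product is the $KK$-product of Theorem~\ref{KKprodalge}.

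Next I would check that successive compositions vanish. The composition $J^{2n}C\to B\to B$ obtained by composing $\gamma$ with $i$ factors, by the construction of $\gamma$ in Theorem~\ref{ufsplite}, through $TC$, and the composition $JC\hookrightarrow TC\to B$ is null in $KK_0^{\mathcal G}$ because $TC$ is algebraically contractible (the standard ``tensor-algebra swindle''). Similarly, $j\circ i=0$ directly gives $i^\ast\circ j^\ast=0$, so consecutive compositions in the proposed long sequence are zero.

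The heart of the argument is exactness. The conceptual route is to identify $C$, up to $KK$-equivalence, with an algebraic mapping cone of $i\colon A\to B$: one defines
\[
\mathrm{Cone}(i)=\{(a,b^{\Delta^1})\in A\oplus B^{\Delta^1} : e_0(b^{\Delta^1})=0,\ e_1(b^{\Delta^1})=i(a)\},
\]
viewed as a $\mathcal G$-algebra, and uses the $F$-splitting together with Theorem~\ref{ufsplite} to build mutually inverse $KK$-classes between $\mathrm{Cone}(i)$ and $C$. Once $\mathrm{Cone}(i)$ is $KK_0^{\mathcal G}$-equivalent to $C$, the standard Puppe construction yields the exact triangle $\Omega C\to A\to B\to C$ in the triangulated category built out of $\KK_{\mathcal G}$, and the long exact sequence on $\pi_\ast$ of $\KK_{\mathcal G}(-,D)$ follows formally (using the identifications $KK_p^{\mathcal G}(C,D)\cong KK_0^{\mathcal G}(C,\Omega^p D)\cong KK_0^{\mathcal G}(J^p C,D)$ from section~\ref{KKg}). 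Naturality of the boundary in $D$ is automatic from the naturality of $\gamma$ and of the product.

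The step I expect to be the main obstacle is constructing the $KK$-equivalence between $\mathrm{Cone}(i)$ and $C$. This requires assembling a commuting web of $F$-split extensions (the path extension of $C$, the universal extension $JC\to TC\to C$, and the cone sequence) and invoking Theorem~\ref{ufsplite} repeatedly to get classifying maps in both directions, then verifying that their $KK$-products are homotopic to identities. Checking that each $F$-splitting used is genuinely $\mathcal G$-equivariant at the level of $\mathcal G$-moduloids (not just $R$-moduloids) is where the equivariant case departs, trivially but essentially, from the proof in \cite{CoT}; everything else is formal manipulation inside the symmetric spectrum framework already set up.
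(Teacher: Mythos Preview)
Your proposal is correct and follows precisely the approach the paper indicates: the paper gives no proof at all beyond the remark that the result ``is proved in the same way as theorem~\ref{les}'', which in turn is deferred to section~6.34 of \cite{CoT}. Your outline---define $\partial$ via the classifying map $\gamma\colon JC\to A$ from Theorem~\ref{ufsplite}, use the mapping-cone/Puppe argument to obtain the exact triangle, and note that the only equivariant check is that all $F$-splittings are $\mathcal G$-equivariant---is exactly the transposition of that argument and is in fact more detailed than anything the paper provides.
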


\begin{theorem} \label{reseq}
Let $\theta \colon {\mathcal G}\rightarrow {\mathcal H}$ be an
equivalence of discrete groupoids.  Let $A$ and $B$ be $\mathcal
H$-algebras.  Then the restriction map $\theta^\ast
\colon \KK_{\mathcal H}(A,B)\rightarrow \KK_{\mathcal G}(A,B)$ is an
isomorphism of spectra.
\end{theorem}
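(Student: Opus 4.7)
The plan is to construct a homotopy inverse to $\theta^{\ast}$ from a quasi-inverse to $\theta$, and then use an equivariant refinement of Lemma~\ref{nisog} to identify the two round-trip compositions with identities. Since $\theta\colon {\mathcal G}\rightarrow {\mathcal H}$ is an equivalence of groupoids, there exists a functor $\psi\colon {\mathcal H}\rightarrow {\mathcal G}$ together with natural isomorphisms $\eta\colon 1_{\mathcal G}\Rightarrow \psi\theta$ and $\epsilon\colon \theta\psi\Rightarrow 1_{\mathcal H}$.  The restriction construction is strictly functorial in the groupoid variable, so $(\psi\theta)^{\ast}=\theta^{\ast}\psi^{\ast}$ and $(\theta\psi)^{\ast}=\psi^{\ast}\theta^{\ast}$; the problem therefore reduces to showing that restriction along a functor naturally isomorphic to the identity acts as an equivalence of $KK$-theory spectra.

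The key lemma to establish is an equivariant analogue of Lemma~\ref{nisog}: if $\phi_0,\phi_1\colon {\mathcal G}\rightarrow {\mathcal H}$ are naturally isomorphic via a natural transformation $\tau$, then $\phi_0^{\ast}$ and $\phi_1^{\ast}$ agree at the level of $KK$-theory spectra, up to a canonical isomorphism of their targets.  For each $a\in \Ob({\mathcal G})$, the component $\tau_a \in \Hom(\phi_0(a),\phi_1(a))_{\mathcal H}$ acts on any ${\mathcal H}$-algebra $A$ as an $R$-algebra isomorphism $A(\tau_a)\colon A(\phi_0(a))\rightarrow A(\phi_1(a))$, and naturality of $\tau$ promotes these into an isomorphism of ${\mathcal G}$-algebras $\tau_{\ast}A\colon \phi_0^{\ast}A\rightarrow \phi_1^{\ast}A$.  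For any ${\mathcal H}$-equivariant map $\alpha\colon A\rightarrow B$, the ${\mathcal H}$-equivariance of $\alpha$ makes the obvious square containing $\phi_0^{\ast}\alpha$, $\phi_1^{\ast}\alpha$, $\tau_{\ast}A$ and $\tau_{\ast}B$ strictly commute.

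Since the constructions $J$, $M_\infty$, $(-)^{S^n}$, $(-)^{\sd^k \Delta^n}$ and $\HOM_{\mathcal G}(-,-)$ are all functorial in the ${\mathcal G}$-algebra variable, the isomorphisms $\tau_{\ast}A$ and $\tau_{\ast}B$ induce an isomorphism of symmetric spectra $\KK_{\mathcal G}(\phi_0^{\ast}A,\phi_0^{\ast}B)\cong \KK_{\mathcal G}(\phi_1^{\ast}A,\phi_1^{\ast}B)$ that intertwines $\phi_0^{\ast}$ with $\phi_1^{\ast}$.  Taking $(\phi_0,\phi_1)=(\psi\theta,1_{\mathcal G})$ together with $\eta$ shows that $\theta^{\ast}\psi^{\ast}$ is equivalent to the identity of $\KK_{\mathcal G}$, and taking $(\phi_0,\phi_1)=(\theta\psi,1_{\mathcal H})$ together with $\epsilon$ shows that $\psi^{\ast}\theta^{\ast}$ is equivalent to the identity of $\KK_{\mathcal H}$.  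Hence $\theta^{\ast}$ is an equivalence of spectra, with quasi-inverse $\psi^{\ast}$.

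The main obstacle is the bookkeeping in the second and third paragraphs: the isomorphism $\tau_{\ast}A$ must be shown to interact correctly with the classifying map construction, with the iterated $J$-construction, and with the structure maps of the symmetric spectrum.  This reduces to tracking naturality of the universal extension $0\rightarrow JA\rightarrow TA\rightarrow A\rightarrow 0$ and the equivariant path extension through an isomorphism of source ${\mathcal G}$-algebras; no new algebraic input is required beyond the formal properties of $J$ and of $F$-split exact sequences developed in Sections~\ref{tensor} and~\ref{path}.
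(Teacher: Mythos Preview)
Your proposal is correct and follows the same strategy as the paper: pick a quasi-inverse $\psi$ and use the natural isomorphisms to exhibit each round-trip restriction as conjugation by an isomorphism of equivariant algebras, hence as an isomorphism of spectra. The paper carries this out by writing the conjugation formula $\alpha(x) = \phi^\ast\theta^\ast\alpha(H_a^{-1} x H_a)$ explicitly, while you phrase it via the induced ${\mathcal G}$-algebra isomorphism $\tau_\ast A$; your invocation of Lemma~\ref{nisog} is unnecessary, since the intertwining you establish is a strict equality rather than an algebraic homotopy.
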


\begin{proof}
Since the functor $\theta$ is an equivalence, there is a functor
$\phi \colon {\mathcal H}\rightarrow {\mathcal G}$ along with
natural isomorphisms $G\colon \phi \circ \theta \rightarrow
1_{\mathcal
G}$ and $H\colon \theta \circ \phi \rightarrow 1_{\mathcal H}$.

Thus, for each object $a\in \Ob ({\mathcal G})$, there is an
isomorphism $H_a \in \Hom (\phi \theta (a),a)_{\mathcal G}$.  Let
$\alpha \colon J^{2n}A\rightarrow M_ \infty (B^{S^n})$ be an $\mathcal
H$-equivariant map. Then the map $\alpha$ can be defined in terms of
the restriction $\phi^\ast \theta^\ast \alpha \colon
J^{2n}A\rightarrow
M_\infty (B^{S^n})$ by the formula
\[
\alpha (x) = \phi^\ast \theta^\ast \alpha (H_a^{-1} xH_a) \qquad
x\in A(a)
\]

Thus the equivariant map $\alpha$ is determined by the restriction
$\phi^\ast \theta^\ast \alpha$.  The natural isomorphism $H$
therefore induces a isomorphism of spectra $H_\ast \colon
\KK_{\mathcal H}(A,B)\rightarrow \KK_{\mathcal H}(A,B)$ such that
$H_\ast \circ \phi^\ast \circ \theta^\ast = 1_{\KK_{\mathcal
H}(A,B)}$.  There is similarly a isomorphism $G_\ast \colon
\KK_{\mathcal G}(A,B)\rightarrow \KK_{\mathcal G}(A,B)$ such that
$G_\ast \circ \theta^\ast \circ \phi^\ast = 1_{\KK_{\mathcal
G}(A,B)}$.

It follows that the map $\theta^\ast$ is a isomorphism, and we are
done.
\end{proof}

\section{Assembly}

Given a functor, $\mathbb E$, from the category of
$G$-$CW$-complexes to the category of spectra, we call
$\mathbb E$ {\em $G$-homotopy-invariant} if it takes
$G$-homotopy-equivalent equivariant maps of $G$-spaces to maps of
spectra that induce the same maps between stable homotopy groups.

We call the functor $\mathbb E$ {\em $G$-excisive} if it is
$G$-homotopy-invariant, and the collection of functors $X\mapsto
\pi_\ast {\mathbb E} (X)$ forms a $G$-equivariant homology
theory.\footnote{See for example chapter 20 of \cite{KL} for the
relevant definitions.}

\begin{definition} \label{GCW}
Let $G$ be a discrete group.  We define the {\em classifying space
for proper actions}, $\EG$, to be the $G$-$CW$-complex with the
following properies:

\begin{itemize}

\item For each point $x\in X$ the isotropy group
$$G_x = \{ g\in G \ |\ xg =x \}$$
is finite.\footnote{As a convention, if we mention a $G$-space, we
assume that the group $G$ acts on the right.}

\item For a
given subgroup $H\leq G$ the fixed point set $\EG^H$ is
equivariantly contractible if $H$ is finite, and empty otherwise.

\end{itemize}

\end{definition}

The classifying space $\EG$ always exists, and is unique up to
$G$-homotopy-equivalence; see \cite{BCH,DL}.

The following two results from \cite{DL} are the main abstract
results on assembly maps we need in this article.

\begin{theorem} \label{DL}
Let $G$ be a discrete group, and let $\mathbb E$ be a $G$-homotopy invariant functor from
the category of $G$-$CW$-complexes to the category of spectra.
Then there is a $G$-excisive functor ${\mathbb E}'$ and a natural
transformation $\alpha \colon {\mathbb E}'\rightarrow {\mathbb E}$
such that the map
$$\alpha \colon {\mathbb E}' (G/H)\rightarrow {\mathbb E}(G/H)$$
is a stable equivalence whenever $H$ is a finite subgroup of $G$.

Further, the pair $({\mathcal E}' , \alpha )$ is unique up to stable
equivalence. \noproof
\end{theorem}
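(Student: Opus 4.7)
My plan is to follow the Davis--L\"{u}ck construction via a homotopy left Kan extension over the orbit category, restricted to the family of finite subgroups. First I would set up the framework: let $\mathcal{F}$ denote the family of finite subgroups of $G$, and let $\Or_{\mathcal{F}}(G)$ denote the orbit category whose objects are the homogeneous spaces $G/H$ with $H\in \mathcal{F}$ and whose morphisms are the $G$-equivariant maps. Evaluating $\mathbb{E}$ on orbits yields a functor $\mathbb{E}(G/?)\colon \Or_{\mathcal{F}}(G)\to \Spectrum$. For a $G$-$CW$-complex $X$, the fixed-point assignment $G/H\mapsto X^H\cong \Map_G(G/H,X)$ defines an $\Or_{\mathcal{F}}(G)^\op$-space, and I would define
\[
{\mathbb E}'(X) = \Map_G(G/?,X)_+ \wedge_{\Or_{\mathcal{F}}(G)} {\mathbb E}(G/?)
\]
as the balanced smash product (coend) over the orbit category. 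The natural transformation $\alpha \colon {\mathbb E}'\rightarrow {\mathbb E}$ would come from the canonical map of coends induced by the evaluation pairing $\Map_G(G/H,X)_+\wedge {\mathbb E}(G/H)\to {\mathbb E}(X)$, using functoriality of $\mathbb{E}$ in $X$.

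Second, I would verify that ${\mathbb E}'$ is $G$-excisive. The key points are that the coend construction preserves homotopy equivalences in the space variable (so $G$-homotopy invariance is automatic, provided one uses a sufficiently nice model of the smash product, e.g. replacing the diagram by a cofibrant one), and that the fixed-point functor $X\mapsto X^H$ turns $G$-cofibration sequences into cofibration sequences of $\Or_{\mathcal{F}}(G)^\op$-spaces. From this one deduces the Mayer--Vietoris and wedge axioms for $\pi_* {\mathbb E}'$, establishing the $G$-equivariant homology theory property.

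Third, I would check the equivalence $\alpha\colon {\mathbb E}'(G/H)\to {\mathbb E}(G/H)$ for $H$ finite. Here the orbit $G/H$ is itself an object of $\Or_{\mathcal{F}}(G)$, so the coend formula reduces by a Yoneda-style argument to ${\mathbb E}(G/H)$, with $\alpha$ the identity up to the standard identifications. This is where the choice of family $\mathcal{F}$ enters crucially: if $H$ were not in $\mathcal{F}$, the coend would only see an approximation of $G/H$ by its finite-isotropy cells.

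Finally, for uniqueness, suppose $({\mathbb E}'',\beta)$ is another such pair. Then $\beta\circ (\alpha')^{-1}\colon {\mathbb E}'\to {\mathbb E}''$ (where $\alpha'$ denotes the induced comparison from ${\mathbb E}'$ on orbits) is defined at least on $\Or_{\mathcal{F}}(G)$, and by $G$-excision both spectra are determined up to stable equivalence by their values on these orbits: this is a cellular induction along the equivariant $CW$-filtration of $X$, using the Mayer--Vietoris axiom for pushouts along $G/H\times S^{n-1}\hookrightarrow G/H\times D^n$ with $H$ finite, together with the wedge axiom for passing to colimits over skeleta. I expect the main technical obstacle to lie precisely in making this cellular-induction argument rigorous at the spectrum level rather than at the level of homotopy groups, which requires a careful choice of point-set model in which the coend is homotopically well behaved; this is the content of the corresponding sections of \cite{DL}, which I would invoke directly.
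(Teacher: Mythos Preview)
Your sketch is a faithful outline of the Davis--L\"uck construction in \cite{DL}, and that is exactly what the paper invokes: the theorem is stated with no proof, preceded by the remark that ``the following two results from \cite{DL} are the main abstract results on assembly maps we need in this article.'' There is thus no paper-side argument to compare against; your proposal supplies precisely the construction the citation points to.

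One small point worth flagging: in your uniqueness paragraph you produce a comparison map only on orbits and then appeal to cellular induction, but strictly speaking one needs the comparison as a natural transformation of functors (not just a collection of zig-zags on objects) before running the induction. The standard fix is to apply the approximation ${\mathbb E}'$ to both ${\mathbb E}$ and ${\mathbb E}''$ and use the universal property of the coend to obtain genuine natural maps ${\mathbb E}' \leftarrow ({\mathbb E}'')' \rightarrow {\mathbb E}''$, each of which is a stable equivalence on finite-isotropy orbits and hence (by the cellular induction you describe) everywhere. This is handled in \cite{DL}, so your final clause ``which I would invoke directly'' covers it, but it is the one place where the argument as written is slightly loose.
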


\begin{definition}
Let $G$ be a discrete group.  Then we define the {\em orbit
category}, $\Or (G)$, to be the category in which the objects are
the $G$-spaces $G/H$, where $H$ is a subgroup of $G$, and the
morphisms are $G$-equivariant maps.

An {\em $\Or (G)$-spectrum} is a functor from the category $\Or (G)$
to the category of symmetric spectra.
\end{definition}

\begin{theorem} \label{DL2}
Let $\mathbb E$ be an $\Or (G)$-spectrum.  Then there is a
$G$-excisive functor, ${\mathbb E}'$, from the category of
$G$-CW-complexes to the category of spectra such that ${\mathbb E}'
(G/H) = {\mathbb E}(G/H)$ whenever $H$ is a subgroup of $G$.

Further, given a functor ${\mathbb F}$ from the category of
$G$-CW-complexes to the category of spectra, there is a natural
transformation
$$\beta \colon ({\mathbb F}|_{\Or (G)})' \rightarrow {\mathbb F}$$
such that the map
$$\beta \colon ({\mathbb F}|_{\Or (G)})'(G/H) \rightarrow {\mathbb F}(G/H)$$
is a stable equivalence whenever $H$ is a subgroup of the group $G$.
\noproof
\end{theorem}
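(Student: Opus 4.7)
The plan is to carry out the standard Davis--L{\"u}ck balanced-product construction. Given an $\Or(G)$-spectrum $\mathbb E$ and a $G$-$CW$-complex $X$, I would define the contravariant $\Or(G)$-space
\[ X^? = \Map_G(G/?, X)_+ \]
sending $G/H$ to the pointed fixed-point space $X^H_+$, and then set
\[ {\mathbb E}'(X) = X^? \wedge_{\Or(G)} {\mathbb E}(?). \]
This balanced smash product can be described concretely as the coequaliser of the two evident maps from $\bigvee_{G/H \to G/K} \Map_G(G/K,X)_+ \wedge {\mathbb E}(G/H)$ to $\bigvee_H \Map_G(G/H,X)_+ \wedge {\mathbb E}(G/H)$, or equivalently realised via the homotopy colimit over the simplex category of the associated simplicial $\Or(G)$-spectrum.

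First I would check that ${\mathbb E}'(G/K)$ is stably equivalent to ${\mathbb E}(G/K)$. This is a Yoneda reduction: $\Map_G(G/H, G/K)$ is precisely the morphism set $\Hom(G/H,G/K)_{\Or(G)}$, so the coend collapses onto the wedge summand indexed by the identity map $G/K \to G/K$. Next, $G$-homotopy invariance follows because $X \mapsto X^?$ takes a $G$-homotopy equivalence to an objectwise homotopy equivalence of pointed $\Or(G)$-spaces, and the balanced smash product preserves level-wise weak equivalences on suitably cofibrant diagrams.

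For excision I would verify the Eilenberg--Steenrod axioms for the graded functors $X \mapsto \pi_\ast {\mathbb E}'(X)$. For a $G$-$CW$-pair $(X,A)$, the quotient $X/A$ induces a cofibre sequence $A^? \to X^? \to (X/A)^?$ of $\Or(G)$-spaces; because the balanced smash product is a left Quillen functor in the projective model structure on $\Or(G)$-diagrams, the resulting sequence of spectra is again a cofibre sequence. Combined with the wedge axiom, which follows since $\Map_G(G/?,-)$ commutes with disjoint unions, this gives the required equivariant homology theory. For the second part, given a functor ${\mathbb F}$ on $G$-$CW$-complexes, I would define $\beta$ from the universal property of the coend: a representative pair consisting of a $G$-map $f\colon G/H \to X$ and a point of ${\mathbb F}(G/H)$ is sent to its image under the functorially induced map $f_\ast \colon {\mathbb F}(G/H) \to {\mathbb F}(X)$. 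Well-definedness on the coend is immediate from functoriality of ${\mathbb F}$, and on orbits $X = G/H$ the map $\beta$ is an equivalence by the same Yoneda argument.

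The hard part will be the excision step, because the strict coend is only homotopy-invariant under cofibrancy assumptions. One must either work throughout in a suitable model category of symmetric spectra, making essential use of the smash product structure guaranteed by \cite{HSS}, or replace the balanced smash product with its derived version built from a two-sided bar construction. Once the model-theoretic framework is fixed, the verification that $\pi_\ast {\mathbb E}'$ satisfies the wedge and Mayer--Vietoris axioms is a standard, if somewhat delicate, diagram chase.
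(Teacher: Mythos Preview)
The paper does not prove this theorem; it is stated with a \noproof marker and attributed to Davis and L{\"u}ck \cite{DL}. Your outline is precisely the Davis--L{\"u}ck balanced-product construction from that reference, so your approach coincides with what the paper is invoking.
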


The constant map $c\colon \EG \rightarrow +$ induces a
map $c_\ast \colon {\mathbb E}'(\EG )\rightarrow {\mathbb
E}(+)$.  This map is called the {\em assembly map}.  The corresponding
{\em isomorphism conjecture} is the assertion that this assembly map
is a stable equivalence.

\begin{definition}
Consider a group $G$, and a $G$-space $X$.  Then we define the {\em
transport groupoid}, $\overline{X}$, to be the groupoid in which the
set of objects is the space $X$, considered as a discrete set, and
we have morphism sets
$$\Hom (x,y)_{\overline{X}} = \{ g\in G \ | xg=y \}$$
\end{definition}

Composition of morphisms in the transport groupoid is defined by the
group operation.  There is a faithful functor $i\colon
\overline{X}\rightarrow G$ defined by the inclusion of each morphism
set in the group.

Given a ring $R$, a group $G$, and a $G$-algebra $A$ over $R$, let
$X$ be a $G$-$CW$-complex.  Then (through the functor $i$), the
$G$-algebra $A$ can also be considered an $\overline{X}$-algebra, and
there is a homotopy-invariant functor, $\mathbb E$, to the category
of spectra, defined by writing
$${\mathbb E}(X) = \KH (A\overline{X})$$

By theorem \ref{DL}, there is an associated
$G$-excisive functor ${\mathcal E}'$, and an assembly map
$$\alpha \colon {\mathcal E}'(X)\rightarrow \KH (A{\overline X})$$
such that the map
$$\alpha \colon {\mathbb E}' (G/H)\rightarrow {\mathbb E}(G/H)$$
is a stable equivalence whenever $H$ is finite.

\begin{definition}
The composition of the above map $\beta$ with the map $i_\ast \colon
\KH (A\overline{X})\rightarrow \KH (AG)$ induced by the faithful
functor $i\colon \overline{X}\rightarrow G$ is called the {\em
$KH$-assembly map} for the group $G$ over the ring $R$ with {\em
coefficients} in the $G$-algebra $A$.

We say that the group $G$ satisfies the {\em $KH$-isomorphism
conjecture} over $R$ with {\em coefficients} in the $G$-algebra $A$
if the assembly map
$$\beta \colon {\mathbb E}(\EG )\rightarrow \K (AG)$$
is a stable equivalence.
\end{definition}

The $KH$-assembly map is a variant of the Farrell-Jones assembly
map.  It was first defined and examined in \cite{BL}, where in
particular its relationship to the Farrell-Jones assembly map in
algebraic $K$-theory is analysed.\footnote{Actually, the
Farrell-Jones conjecture and the corresponding $KH$-isomorphism
conjecture are usually formulated in terms of virtually cyclic
groups rather than finite groups.  But by remark 7.4 in \cite{BL},
the above formulation of the $KH$-isomorphism conjecture is
equivalent to the original.}

The following result can be deduced directly from the above
definition.

\begin{theorem}
Consider the $\Or (G)$-spectrum
$${\mathbb E}(G/H) =\KH (G/H) = \KH (A\overline{G/H})$$
and let ${\mathbb E}'$ be the associated excisive functor.

Let $X$ be a path-connected space, and let $c\colon X\rightarrow +$
be the constant map.  Then up to stable equivalence the induced map
$$c_\ast \colon {\mathbb E}'(X)\rightarrow {\mathbb E}'(+)$$
is the $KH$-assembly map. \noproof
\end{theorem}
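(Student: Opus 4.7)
The plan is to show that the two constructions of the $KH$-assembly map---the original one built via Theorem \ref{DL} from the functor $\mathbb{F}(X)=\KH(A\overline{X})$ on $G$-$CW$-complexes, and the new one built via Theorem \ref{DL2} from the $\Or(G)$-spectrum $\mathbb{E}(G/H)=\KH(A\overline{G/H})$---agree up to stable equivalence, so that $c_\ast \colon \mathbb{E}'(X) \to \mathbb{E}'(+)$ recovers the assembly map.

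First I would identify the target $\mathbb{E}'(+)$ with $\KH(AG)$. For the one-point $G$-space $+ = G/G$, the transport groupoid $\overline{G/G}$ has a single object whose automorphism group is all of $G$, so there is a canonical equivalence of groupoids $\overline{G/G} \simeq G$. By Theorem \ref{reseq}, this equivalence induces a stable equivalence $\mathbb{E}'(+) = \mathbb{E}(G/G) = \KH(A\overline{G/G}) \simeq \KH(AG)$, matching the target in the original definition of the $KH$-assembly map. Likewise, for a path-connected $X$ the map $i_\ast \colon \KH(A\overline{X}) \to \KH(AG)$ appearing in the original definition will be seen to be the map induced functorially by the constant map $c \colon X \to +$ once the excisive machinery is in place.

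Next I would compare the two Davis--L\"uck extensions. The restriction $\mathbb{F}|_{\Or(G)}$ of the functor $\mathbb{F}(X)=\KH(A\overline{X})$ to the orbit subcategory is tautologically the $\Or(G)$-spectrum $\mathbb{E}$. By Theorem \ref{DL2} applied to $\mathbb{F}$, there is an excisive functor $(\mathbb{F}|_{\Or(G)})'$, which is precisely $\mathbb{E}'$, and a natural transformation $\beta \colon \mathbb{E}' \to \mathbb{F}$ that is a stable equivalence on every orbit $G/H$. On the other hand, Theorem \ref{DL} produces an excisive functor $\mathbb{F}'$ and $\alpha \colon \mathbb{F}' \to \mathbb{F}$ that is a stable equivalence on orbits with finite isotropy, and the original $KH$-assembly map is $i_\ast \circ \alpha_{\EG}$. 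By the uniqueness clause in Theorem \ref{DL}, the pairs $(\mathbb{F}',\alpha)$ and $(\mathbb{E}',\beta)$ restricted to the finite-isotropy subcategory are stably equivalent, so $i_\ast \circ \alpha_{\EG}$ agrees with $i_\ast \circ \beta_{\EG}$.

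The final step is to identify $c_\ast \colon \mathbb{E}'(\EG) \to \mathbb{E}'(+)$ with $i_\ast \circ \beta_{\EG}$. By naturality of $\beta$ with respect to the constant map $\EG \to +$, the square
\[
\begin{array}{ccc}
\mathbb{E}'(\EG) & \stackrel{\beta_{\EG}}{\rightarrow} & \mathbb{F}(\EG) = \KH(A\overline{\EG}) \\
c_\ast \downarrow & & \downarrow \mathbb{F}(c) \\
\mathbb{E}'(+) & \stackrel{\beta_+}{\rightarrow} & \mathbb{F}(+) = \KH(AG) \\
\end{array}
\]
commutes, and under the identification $\mathbb{E}'(+) \simeq \KH(AG)$ from the first step, $\mathbb{F}(c)$ becomes $i_\ast$. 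The main obstacle will be this last identification: verifying that for path-connected $X$ the induced map $\mathbb{F}(c) \colon \KH(A\overline{X}) \to \KH(AG)$ really does coincide with $i_\ast$ requires using the groupoid equivalence $\overline{+}\simeq G$ of Theorem \ref{reseq} compatibly with the product structure on $\KK$ and the change-of-groupoid functor $\overline{X}\to \overline{+}$ induced by $c$. Once this naturality bookkeeping is done, we conclude that $c_\ast$ agrees with the $KH$-assembly map up to stable equivalence.
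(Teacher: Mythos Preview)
Your proposal is essentially correct and supplies exactly the details the paper leaves implicit (the paper states the theorem with no proof, merely remarking that it ``can be deduced directly from the above definition''): you invoke the Davis--L\"uck machinery of Theorems~\ref{DL} and~\ref{DL2}, compare the two excisive extensions of $\KH(A\overline{-})$ via the uniqueness clause, and then read off the identification from the naturality square for $\beta$ applied to $c\colon X\to +$.

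Two small points. First, the identification $\mathbb{E}'(+)\simeq\KH(AG)$ does not need Theorem~\ref{reseq} (which concerns equivariant $\KK_{\mathcal G}$, not $\KH$ of convolution algebroids): the transport groupoid $\overline{G/G}$ is literally isomorphic to $G$ as a one-object groupoid, so $A\overline{G/G}\cong AG$ as $R$-algebroids, and the identification is on the nose. Second, your argument is written for $X=\EG$, while the theorem is stated for any path-connected $X$; nothing in your naturality square uses the special properties of $\EG$, so simply replace $\EG$ by $X$ throughout and observe that the functor $\overline{X}\to\overline{+}\cong G$ induced by $c$ is exactly the faithful functor $i$, making $\mathbb{F}(c)=i_\ast$ tautologically.
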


\section{Algebraic $KK$-theory and homology}

Let $G$ be a discrete group, and let $X$ be a right $G$-simplicial
complex (or just a {\em $G$-complex} for short).  Let us term $X$
{\em $G$-compact} if the quotient $X/G$ is a finite simplicial
complex.  Any $G$-complex is a direct limit of its $G$-compact
subcomplexes.

Given a ring $R$, the right action of $G$ on the space $X$ induces a
left-action of $G$ on the simplicial algebroid $R^X$.

\begin{definition}
Let $A$ be a $G$-algebra, and let $X$ be a $G$-complex.  Then we
define the {\em equivariant algebraic $K$-homology spectrum} of $X$
with {\em coefficients} in $A$ to be the direct limit
$$\K_{\mathrm{hom}}^G (X;A) = \lim_{{K\subseteq X} \atop {G-\mathrm{compact}}} \KK_G (R^K,A)$$

The associated {\em equivariant algebraic $K$-homology groups} are
defined by the formula
$$K_n^G (X;A) = \pi_n \K_{\mathrm{hom}} (X;A)$$
\end{definition}

\begin{theorem} \label{KhomA}
The functor $X\mapsto \K_{\mathrm{hom}}^G (X;A)$ is
$G$-homotopy-invariant and excisive.  For the one-point space, $+$,
we have $\K_\mathrm{hom}^G (+;A) = \KK_G (R,A)$.
\end{theorem}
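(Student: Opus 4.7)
The plan is to verify the three assertions separately. The value at the one-point space is immediate, since $+$ admits only itself as a $G$-compact subcomplex, the $\mathcal{G}$-algebra $R^+$ is simply $R$ equipped with the trivial action, and the defining colimit collapses to $\KK_G(R,A)$.

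For $G$-homotopy invariance, suppose that equivariant maps $f_0, f_1 : X \to Y$ are linked by an equivariant homotopy $H : X \times \Delta^1 \to Y$, with $G$ acting trivially on $\Delta^1$. For any $G$-compact subcomplex $K \subseteq X$, the image of $H|_{K \times \Delta^1}$ lies in a $G$-compact subcomplex $L \subseteq Y$, and pulling back through the contravariant functor $R^{(-)}$ yields an equivariant homomorphism $H^\ast : R^L \to R^{K \times \Delta^1}$. Under the natural identification $R^{K \times \Delta^1} \cong (R^K)^{\Delta^1}$ coming from the product formula in section \ref{alghom}, the evaluations $e_0$ and $e_1$ recover $f_0|_K^\ast$ and $f_1|_K^\ast$, exhibiting these as elementarily homotopic equivariant homomorphisms. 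By the equivariant analogue of theorem \ref{HOM}, they induce the same map on $\pi_\ast \KK_G(R^L, A)$, and passing to the direct limit over $G$-compact subcomplexes shows $(f_0)_\ast = (f_1)_\ast$ at the level of homotopy groups.

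For the excisive assertion, the goal is to check that $\pi_\ast \K_{\mathrm{hom}}^G(-;A)$ satisfies the axioms of a $G$-equivariant homology theory. The disjoint union axiom is immediate from the colimit description, since any $G$-compact subcomplex of a disjoint union lies in finitely many summands. The core step is the long exact sequence of a pair: given a $G$-$CW$-pair $(X, A)$, restriction furnishes a surjection of $\mathcal{G}$-algebras $R^X \to R^A$; writing $R^{X,A}$ for its kernel, I would show the resulting short exact sequence is $F$-split by using the pointed simplicial set machinery of section \ref{alghom} to produce a $\mathcal{G}$-module section. Theorem \ref{les2} then supplies a long exact sequence in $\KK_G$-theory, and taking the direct limit over compatible pairs of $G$-compact subcomplexes yields the desired long exact sequence for $(X, A)$.

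The main obstacle lies in this last step: constructing $F$-splittings naturally in the pair of $G$-compact subcomplexes and identifying $R^{X,A}$ in a tractable form. Since the naive quotient $X/A$ is not itself a simplicial complex, one would likely model $R^{X,A}$ via the mapping cone of $A \hookrightarrow X$ as a pointed simplicial set together with the basepoint-kernel construction $\mathcal{A}^{X,+}$ from section \ref{alghom}; ensuring that this model is both functorially compatible with inclusions of $G$-compact subcomplexes and admits the required $F$-splittings is where the substantive verification sits.
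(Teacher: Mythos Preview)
Your treatment of the one-point space and of $G$-homotopy invariance matches the paper essentially verbatim. The difference lies in the excision argument. Rather than the long exact sequence of a pair, the paper proves Mayer--Vietoris for pushouts directly: given a subcomplex $B\subseteq X$ and an equivariant map $f\colon B\to C$, the induced pullback square of algebras yields a short exact sequence
\[
0 \longrightarrow R^{X\cup_B C} \xrightarrow{(F^\ast,I^\ast)} R^X \oplus R^C \xrightarrow{\,i^\ast - f^\ast\,} R^B \longrightarrow 0,
\]
and the $F$-splitting is simply $(i_\ast,0)$, where $i_\ast\colon R^B \to R^X$ is the $R$-module map induced by the subcomplex inclusion $B\hookrightarrow X$ (extension by zero on simplices outside $B$); this is equivariant but not multiplicative, which is precisely what an $F$-splitting demands. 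Theorem~\ref{les2} then gives the Mayer--Vietoris long exact sequence immediately, with no need to identify a relative term, model a quotient, or invoke mapping cones. Your pair-based route is viable---indeed the $F$-splitting of $R^X\to R^A$ you seek is the same extension-by-zero idea in the other direction---but after obtaining the long exact sequence you would still owe the excision statement (that the relative group depends only on the cofibre), whereas the pushout formulation delivers the long exact sequence and excision in one stroke. The paper's approach thus bypasses exactly the obstacle you flag in your final paragraph.
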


\begin{proof}
Suppose that $X$ and $Y$ are $G$-compact simplicial $G$-complexes.
Let $f,g\colon X\rightarrow Y$ be equivariant simplicial maps.
Suppose that there is an elementary equivariant simplicial homotopy,
$F\colon X\times \Delta^1 \rightarrow Y$, between $f$ and $g$.  Then
we have an induced equivariant homomorphism $F^\ast \colon
R^Y\rightarrow R^X\otimes_R R^{\Delta^1} = R^X\otimes_\Z \Z [t]$ such
that $e_0 F^\ast = f^\ast$ and $e_1 F^\ast = g^\ast$.

By construction of equivariant $KK$-theory, algebraically
$G$-homotopic maps $\alpha , \beta \colon B\rightarrow B'$ induce
homotopic maps $\alpha^\ast ,\beta^\ast \colon \KK_G
(B',A)\rightarrow \KK_G (B,A)$.  It follows that the induced maps
$f_\ast ,g_\ast \colon \KK_G (X;A)\rightarrow \KK_G (Y;A)$ are
homotopic.

More generally, two $G$-homotopic equivariant simplicial maps
$f,g\colon X\rightarrow Y$ can be linked by a finite chain of
elementary homotopies, and the above argument again shows that the
induced maps $f_\ast ,g_\ast \colon \KK_G (X;A)\rightarrow \KK_G
(Y;A)$ are homotopic.  Generalising to non-compact $G$-complexes and
taking direct limits, it follows that the functor $X\mapsto
\K_\mathrm{hom}^G (X;A)$ is $G$-homotopy-invariant.

Let $X$ and $C$ be $G$-compact simplicial $G$-complexes, and suppose
we have a subcomplex $B\subseteq X$ and an equivariant map $f\colon
B\rightarrow C$.  Let $i\colon B\hookrightarrow X$ be the inclusion
map, and let $F\colon X\rightarrow X\cup_B C$ and $I\colon
C\rightarrow X\cup_B C$ be the maps associated to the push-out
$X\cup_B C$.  Then by functoriality, we have an induced pullback
diagram
$$\begin{array}{ccc}
R^C & \rightarrow & R^B \\
\uparrow & & \uparrow \\
R^{X\cup_B C} & \rightarrow & R^X \\
\end{array}$$

It is easy to check that we have a short exact sequence
$$0\rightarrow R^{X\cup_B C} \stackrel{(F^\ast , I^\ast)}{\rightarrow} R^X \oplus R^C \stackrel{i^\ast - f^\ast}{\rightarrow}R^B \rightarrow 0$$

The map $i\colon B\rightarrow X$ is an inclusion of a subcomplex.
It follows that we have an induced inclusion $i_\ast \colon
R^B\rightarrow R^X$.  This induced inclusion is an equivariant map,
but not an equivariant homomorphism.  Hence the above short exact
sequence has an $F$-splitting $(i_\ast , 0)$.

Therefore, by theorem \ref{les2}, we have natural maps $\partial
\colon K_n^G (X\cup_B C;A) \rightarrow K_{n-1}^G (B;A)$ such that we
have a long exact sequence
$$\rightarrow K_n^G(B;A)\stackrel{(i_\ast , -f_\ast )}{\rightarrow}K_n^G (X;A)\oplus K_n (C;A) \stackrel{F_\ast + I_\ast}{\rightarrow} K_n^G (X\cup_B C;A ) \stackrel{\partial}{\rightarrow} K_{n-1}^G(A)\rightarrow$$

Similar exact sequences can be seen to exist in the non-compact case
by taking direct limits.

To check the final axiom required of a $G$-homology theory, let $\{
X_i \ |\ i\in I \}$ be a family of $G$-compact simplicial
$G$-complexes.  Let $j_i \colon X_i \rightarrow \amalg_{i\in I} X_i$
be the canonical inclusion.  Then by definition of equivariant
$K$-homology as a direct limit, the map
$$\oplus_{i\in I} (j_i)_\ast \colon \oplus_{i\in I} K_n^G (X_i;A)\rightarrow K_n^G  (( \amalg_{i\in I} X_i );A)$$
is an isomorphism for all $n\in \Z$.

By definition, for the one-point space, $+$, we have
$\K_{\mathrm{hom}}^G (+;A) = \KK_G (R,A)$.
\end{proof}

\section{The Index Map}

In \cite{Mitch6}, the author used natural constructions of analytic
$KK$-theory spectra to prove that the Baum-Connes assembly map fits
into to the general assembly map machinery.  The $KH$-assembly map
is defined using the general machinery; the constructions in this
section prove that the map can be described using algebraic
$KK$-theory.  The methods are somewhat similar to those of
\cite{Mitch6}.

Let $G$ be a discrete group, and let $R$ be a ring.  Consider a
$G$-algebra $A$, over $R$, and a $G$-compact complex $K$.  According
to theorem \ref{descentealg}, we have a natural map of spectra
$$D\colon \KK_G (R^K,A)\rightarrow \KK (R^K G ,A G)$$

The algebra $R^K G$ is itself a finitely generated projective $R^K
G$-module; let us label this module ${\mathcal E}_K$.  Then by
theorem \ref{Mproducta} we have an induced morphism
$${\mathcal E}_K\wedge \colon \KK(R^K G, A G) \rightarrow \KH (A G)$$

Let $X = \EG$.  Then $X$ can be viewed as a $G$-simplicial complex.
Combining the above two maps and taking the
direct limit over $G$-compact subcomplexes, we obtain a map
$$\beta \colon \K_\mathrm{hom}^G(X;A)\rightarrow \KH (AG)$$

We call this map the {\em index map}, by analogy with the map
appearing in the Baum-Connes conjecture (see
\cite{BCH}).

Observe that by definition of the space $\EG$, we can take the
space $\EG$ to be a single point if the group $G$ is finite.
In this case, the above map is simply the composition
$$\KK_G (R,A)\rightarrow \KK (RG,AG) \rightarrow \KK (R,AG) \cong \KH(AG)$$

Now, for a ring $R$, let $M_\infty (R)$ be the $R$-algebra of all
infinite matrices over $R$, indexed by $\N$, where almost all
entries are zero.  Given an $R$-algebra $A$, let $\kappa \colon
A\rightarrow A\otimes_R M_\infty (R)$ be the homomorphism defined by
the formula $\kappa (a) = a\otimes P$, where $P\in M_\infty (R)$ is
the infinite matrix with $1$ at the top left corner, and every other
entry zero.  By construction of algebraic $KK$-theory, given
$R$-algebras
$A$ and $B$ the induced map
$$\kappa_\ast \colon KK_p (A,B) \rightarrow KK_p (A,B\otimes_R M_\infty (R))$$
is an isomorphism.

Let $G$ be a finite group.  Let $M_G (R)$ be the $R$-algebra of
matrices with entries in $R$ indexed by the group $G$.  Then there
is certainly a natural isomorphism
$$M_\infty (R) \cong M_\infty (R)\otimes_R M_G (R)$$

It follows that, given $G$-algebras $A$ and $B$, we have a natural
map $\kappa \colon B\rightarrow B\otimes_R M_G (R)$ inducing
isomorphisms
$$\kappa_\ast \colon KK_p^G (A,B) \rightarrow KK_p^G (A,B\otimes_R M_G (R))$$
at the level of $KK$-theory groups.

\begin{lemma}
Let $G$ be a finite group, and let $A$ be a $G$-algebra over a ring
$R$.  Then there is a natural injective homomorphism $\sigma \colon
AG \rightarrow A\otimes_R M_G (R)$, where the image is the set of
all elements of the $R$-algebra $A\otimes_R M_G (R)$ that are fixed
by the group $G$.
\end{lemma}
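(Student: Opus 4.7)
The plan is to give an explicit formula for $\sigma$ and then verify the three required properties (multiplicativity, injectivity, and that the image equals the $G$-fixed subalgebra) by direct computation. First I would fix notation: the $R$-algebra $M_G(R)$ has a canonical basis of matrix units $e_{h,k}$ with $h,k\in G$ satisfying $e_{h,k}e_{h',k'}=\delta_{k,h'}\,e_{h,k'}$, and $G$ acts on $M_G(R)$ by conjugation through the regular representation, i.e.\ $\gamma\cdot e_{h,k}=e_{\gamma h,\gamma k}$. The diagonal $G$-action on $A\otimes_R M_G(R)$ is then $\gamma\cdot(a\otimes e_{h,k})=\gamma(a)\otimes e_{\gamma h,\gamma k}$.

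Next, I would define $\sigma$ on the standard generators of $AG$ by
\[ \sigma(a\cdot g)\;=\;\sum_{h\in G} h(a)\otimes e_{h,\,hg} \qquad a\in A,\ g\in G, \]
and extend $R$-linearly. $G$-invariance is immediate: substituting $h'=\gamma h$ in the formula for $\gamma\cdot\sigma(ag)$ recovers $\sigma(ag)$. So $\sigma$ lands in $(A\otimes_R M_G(R))^G$.

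To check multiplicativity, compute $\sigma(ag)\sigma(bk)$ using the relation for matrix units; the column condition $\delta_{hg,h'}$ collapses the double sum to $\sum_h h(a)\,hg(b)\otimes e_{h,hgk}=\sum_h h(a\cdot g(b))\otimes e_{h,hgk}$, which is exactly $\sigma(a\cdot g(b)\cdot gk)=\sigma((ag)(bk))$ by the multiplication rule in the convolution algebra. Injectivity is read off the $h=e$ row: the coefficient of $e_{e,g}$ in $\sigma(\sum_g a_g g)$ is $a_g$, so the map has trivial kernel.

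The main remaining point is identifying the image with the fixed subalgebra. Given $x=\sum_{h,k} x_{h,k}\otimes e_{h,k}\in (A\otimes_R M_G(R))^G$, the invariance condition is $\gamma(x_{h,k})=x_{\gamma h,\gamma k}$, so taking $\gamma=h^{-1}$ yields $x_{h,k}=h(x_{e,h^{-1}k})$. Reindexing with $g=h^{-1}k$ rewrites $x$ as $\sum_{h,g} h(x_{e,g})\otimes e_{h,hg}=\sigma\!\left(\sum_g x_{e,g}\cdot g\right)$, so every fixed element lies in the image. Naturality of $\sigma$ in $A$ (with respect to equivariant homomorphisms) is then visible from the explicit formula, since such maps commute with the $G$-action used in both sides of the definition. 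I do not expect any genuine obstacle here; the only place one has to be careful is keeping the orientation of the $G$-action on $M_G(R)$ straight, since choosing $e_{h,k}\mapsto e_{hg^{-1},kg^{-1}}$ instead would force $\sigma(ag)$ to use $h^{-1}(a)$ rather than $h(a)$ and would flip the sign convention throughout.
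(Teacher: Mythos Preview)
Your proposal is correct and is essentially the same construction as the paper's: the paper defines $\sigma(ag)=\pi(a)\rho(g)$ acting on $V_G\otimes_R A$ with $\pi(a)$ diagonal (entry $h(a)$ at position $h$) and $\rho(g)$ a permutation matrix, which unwinds to exactly your formula $\sigma(ag)=\sum_h h(a)\otimes e_{h,hg}$. Your write-up is in fact more careful than the paper's, which leaves the fixed-point identification as ``easy to check'' and contains a couple of evident typos in the formulas for $\pi$ and $\rho$.
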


\begin{proof}
Let $V_G$ be the $R$-module consisting of all maps from $G$ to the
ring $R$.  Then $\End (V_G) = M_G (R)$.  Note that elements of the
tensor product $V_G\otimes_R A$ can be viewed as maps $s\colon
G\rightarrow A$.  Define homomorphism
$$\rho \colon G\rightarrow \End (V_G \otimes_R A) \qquad \pi \colon A \rightarrow \End (V_G \otimes_R A)$$
by the formulae
$$\rho (g)s(g_1) = s(gg_1) \qquad \pi (a) s(g_1) = \pi (g_1 (a)) s(g_1)$$
respectively, where $s\colon G\rightarrow A$ is an element of the
module $V_G\otimes_R A$, $g,g_1\in G$, and $a\in A$.

Then we have an associated injective homomrphism $\sigma \colon
AG\rightarrow \End (V_G\otimes_R A)$ defined by the formula
$$\sigma \left( \sum_{i=1}^m a_i g_i \right) = \sum_{i=1}^m \pi (a_i) \rho (g_i)$$

Now $\End (V_G\otimes_R A) = A\otimes_R M_G (R)$.  It is easy to
check that the image is the fixed point set.
\end{proof}

The proof of the following result is now adapted from the
corresponding result on the Baum-Connes conjecture, where it is
sometimes called the {\em Green-Julg theorem}.  See for example
\cite{GHT} for an elementary account.

\begin{theorem} \label{FINI}
The index map is a stable equivalence for finite groups.
\end{theorem}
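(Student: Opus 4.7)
Since $G$ is finite I would first take $\EG$ to be a single point, which, by the explicit description given in the paragraph immediately before the theorem statement, reduces the assertion to showing that the composition
\[
\beta \colon \KK_G(R, A) \xrightarrow{D} \KK(RG, AG) \xrightarrow{\mathcal{E}_{+}\wedge\,} \KK(R, AG) \cong \KH(AG)
\]
is a stable equivalence, where $D$ is the descent map of Theorem \ref{descentealg} and the second arrow is the product with $RG$ viewed as a finitely generated projective right $RG$-module.

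The plan is to construct an explicit homotopy inverse following the Green-Julg template. The ingredients are already in place: the stability equivalence $\kappa_{\ast}\colon \KK_G(R, A) \to \KK_G(R, A\otimes_R M_G(R))$, and the preceding lemma identifying $AG$ with the $G$-fixed subalgebra of $A\otimes_R M_G(R)$ via the injective $R$-algebra homomorphism $\sigma$. Since the image of $\sigma$ consists of $G$-fixed elements, equipping $AG$ with the trivial $G$-action makes $\sigma$ a homomorphism of $G$-algebras, and a non-equivariant homomorphism between $G$-algebras with trivial action is automatically equivariant. I would therefore define the candidate inverse as the composition
\[
\gamma \colon \KH(AG) = \KK(R, AG) \longrightarrow \KK_G(R, AG) \xrightarrow{\sigma_{\ast}} \KK_G(R, A\otimes_R M_G(R)) \xleftarrow[\sim]{\kappa_{\ast}} \KK_G(R, A).
\]

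It then remains to check that $\beta\circ\gamma \simeq \mathrm{id}$ and $\gamma\circ\beta \simeq \mathrm{id}$. The first direction is the easier one: chasing definitions, the composite $(\mathcal{E}_{+}\wedge\,)\circ D$ followed by $\sigma_{\ast}$ essentially re-extracts the fixed-point subalgebra, and stability via $\kappa_{\ast}$ cancels the auxiliary matrix factor, so one reduces to a routine verification. The harder direction is $\gamma\circ\beta \simeq \mathrm{id}$, which is the algebraic incarnation of the key step of the $C^{\ast}$-algebraic Green-Julg theorem, as treated for example in \cite{GHT}. I would handle it by working with representative homomorphisms $\alpha\colon J^{2n}R \to M_{\infty}(A^{S^n})$ and producing an explicit $G$-equivariant algebraic homotopy between $\alpha$ and $\gamma\circ\beta(\alpha)$, implemented by a $G$-equivariant invertible conjugation inside $M_G(R)$ in the spirit of the rotation matrix $W$ used in the proof of Lemma \ref{nisog}.

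The main obstacle I anticipate is upgrading the classical $KK$-group level argument to the spectrum level: the homotopies produced must be functorial enough to be compatible with the structure maps of the $\KK_G$-spectrum, so that one obtains an equivalence of symmetric spectra rather than merely a degreewise isomorphism of $KK$-theory groups. The naturality of $\kappa$ and $\sigma$, together with Theorem \ref{HOM} and the explicit description of equivalence classes of representatives in Section \ref{KKg}, should suffice for this, but this functorial bookkeeping is the delicate part of the argument.
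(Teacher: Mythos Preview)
Your proposal is correct and follows essentially the same Green--Julg strategy as the paper: reduce to $\EG = +$, build the inverse $\gamma$ out of $\kappa_\ast$ and $\sigma_\ast$, and verify the two composites are the identity, with the harder direction handled via Lemma~\ref{nisog} (the paper phrases the obstruction as the tensor flip $s$ on $M_G(R)\otimes_R M_G(R)$, which is naturally isomorphic to the identity). Your worry about lifting everything to the spectrum level is unnecessary: since $\beta$ is already a map of spectra, it suffices to produce a group-level inverse on each $\pi_p$, which is exactly what the paper does.
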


\begin{proof}
Let $G$ be a finite group, and let $A$ be a $G$-algebra over the
ring $R$.  As we remarked above, we have a natural isomorphism
$\kappa_\ast \colon KK^G_p (R,A)\rightarrow KK^G_p (R,A\otimes_R
M_G(R))$.  Let $\sigma_\ast \colon \KK^G_p (R,AG)\rightarrow KK^G_p
(R,(A\otimes_R M_G(R))$ be induced by the isomorphism in the above
lemma.

We have an obvious canonical map
$$KH_p (AG) = KK_p (R,AG)\rightarrow KK_p^G (R,AG)$$
and we can define a homomorphism $\gamma \colon KH_p
(AG)\rightarrow
KK_p^G(R,A)$ fitting into a commutative diagram
$$\begin{array}{ccc}
KK^G_p (R,AG) & \stackrel{\sigma_\ast}{\rightarrow} & KK^G_p (R,A\otimes_R M_G(R)) \\
\uparrow & & \uparrow \\
KH_p (AG) & \stackrel{\gamma}{\rightarrow} & KK_G^p (R,A) \\
\end{array}$$

We claim that $\gamma$ is an inverse to the map $\beta \colon \KK_G
(R,A)\rightarrow \KK (R,AG)$ at the level of groups, thus proving
the result.  Looking at suspensions, it suffices to prove the case
when $p=0$.

Note that, by construction of the algebraic $KK$-theory groups, an
arbitary $KK$-theory class $[\alpha]\in KK_0^G (B,C)$ is represented
by a homomorphism $\alpha \colon J^{2n}B \rightarrow
C^{S^n}\otimes_R M_\infty (R)$.

Hence, consider a homomorphism $\alpha \colon J^{2n}R\rightarrow
A^{S^n}\otimes_R M_\infty (R)$.  Then we have a commutative diagram
$$\begin{array}{ccccc}
J^{2n}R & \rightarrow & J^{2n}RG & \stackrel{\alpha_\ast}{\rightarrow} & (A^{S^n}\otimes_R M_\infty (R))G \\
& & \downarrow & & \downarrow \\
& & J^{2n}R\otimes_R M_G(R) & \stackrel{\alpha_\ast}{\rightarrow} & A^{S^n}\otimes_R M_\infty (R) \\
\end{array}$$
where the vertical maps are versions of the map $\sigma$ in the
above lemma.

Now, at the level of $KK$-theory groups, the class $\beta \circ
\gamma [\alpha ]$ is the composition of the top maps in the above
diagram and the vertical map on the right.

On the other hand, the composition of the first map on the top and
the first map on the left is simply the stabilisation map $\kappa$.
Hence $[\alpha ] = [\alpha \otimes 1] = \beta \circ \gamma [\alpha
]$.  We have shown that $\beta \circ \gamma = 1_{KH (AG)}$.

Conversely, consider an equivariant homomorphism $\alpha \colon
J^{2n}R \rightarrow (AG)^{S^n}\otimes_R M_\infty (R)$.  Then the
composition with the map $\sigma$ defines a class in the $KK$-theory
group $KK_G (R,B)$.

The image $\beta [\alpha]$ is defined by the composition
$$J^{2n}R \rightarrow J^{2n}(RG) \stackrel{\alpha_\ast}{\rightarrow} ((AR)^{S^n}G)G$$

We can form the diagram
$$\begin{array}{ccccc}
J^{2n}RG & \stackrel{\alpha_\ast}{\rightarrow} & ((AR)^{S^n}G)G & \stackrel{\sigma_\ast}{\rightarrow} & (A\otimes M_R(G))^{S^n}G \\
\uparrow & & \uparrow & & \uparrow \\
J^{2n}R & \stackrel{\alpha}{\rightarrow} & (AR)^{S^n}G & = & (AR)^{S^n}G \\
\end{array}$$

The left square of the diagram commutes.  The right square of the
diagram does not commute.  The issue is that the middle vertical map
gives us a copy of the group $G$ on the right of the relevant
expession, whereas the vertical map on the right gives us a copy of
the ring $M_R(G)$ in the centre of the expression.

If we apply the homomorphsim $\sigma \colon (A\otimes_R M_G(R))G
\rightarrow A\otimes_R M_G(R)\otimes_R M_G(R)$, we see that the
square on the right commutes modulo the isomorphism $s\colon
M_G(R)\otimes_R M_G(R) \rightarrow M_G (R)\otimes_R M_G(R)$ defined
by writing $s(x\otimes y) = y\otimes x$.

The map $s$ is clearly naturally isomorphic to the identity map.  It
follows by lemma \ref{nisog} that the right square in the above
diagram commutates at the level of $KK$-theory groups.

Now the composite of the map on the left and the maps on the top row
of our diagram are the composition give us the class $\gamma \circ
\beta [\alpha ]$.  Thus $\gamma \circ \beta [\alpha ] = [\alpha ]$.

We see that the homomorphism $\gamma$ is the inverse of the
homomorphism $\beta$, and we are done.
\end{proof}

Given an equivariant map of $G$-complexes $f\colon X\rightarrow Y$,
there is an induced functor $f_\ast \colon \overline{X}\rightarrow
\overline{Y}$ between the transport groupoids.  There is an obvious faithful functor $i\colon
\overline{X}\rightarrow G$.  If $A$ is a $G$-algebra, it can
therefore also be regarded as an $\overline{X}$-algebra.

Now, let $K$ be a $G$-compact subcomplex.  Then we have an induced
restriction map
$$i^\ast \colon \KK_G (R^K,A)\rightarrow \KK_{\overline{X}}(R^K,A)$$

By theorem \ref{descentealg}, there is a natural map
$$D\colon \KK_{\overline{X}} (R^K,A)\rightarrow \KK(R^K\overline{X} ,A\overline{X})$$

\begin{definition}
Let $x\in X$.  Then we write ${\mathcal E}_K (x)$ to denote the set
of collections
$$\{ \eta_y \in \Hom (x,y)_{R^K\overline{X}} \ |\ y\in X \}$$
such that the formula
$$\eta_y g = \eta_z$$
holds for all elements $g\in G$ such that $yg=z$.
\end{definition}

The assignment $x\mapsto {\mathcal E}_K (x)$ is an
$R^K\overline{X}$-module, which we again label ${\mathcal E}_K$.  The
$R^K\overline{X}$-action is defined by composition of morphisms.

Since the action of the group $G$ on the space $X$ is not
necessarily transitive, the module $R^K$ is not necesarily finitely
generated and projective.  However, the restriction, ${\mathcal
E}|_{\Or (x)}$, to the path-component $(R^K \overline{X})_{\Or (x)}$
is isomorphic to the module $\Hom (-,x)_{R^K \overline{X}|_{\Or
(x)}}$.  Thus, the module ${\mathcal E}_K$ is almost
finitely-generated and projective, and we have a canonical morphism
$${\mathcal E}_K \wedge \colon \KK(R^K \overline{X}, A\overline{X}) \rightarrow  \KH (A\overline{X})$$

We can compose with the map
$$D\colon \KK_{\overline{X}} (R^K,A)\rightarrow \KK(R^K \overline{X} ,A\overline{X})$$
to form the composite
$$\gamma_K \colon \KK_G (R^K,A)\rightarrow \K (A\overline{X})$$
and take direct limits to obtain a map
$$\gamma \colon \K_\mathrm{hom}^G (X;A)\rightarrow \K (A\overline{X})$$

\begin{proposition}
Let $f\colon (X,K)\rightarrow (Y,L)$ be a map of pairs of
$G$-simplicial complexes, where the subcomplexes $K$ and $L$ are $G$-compact.  Let
$$f^\ast \colon R^L \overline{X}\rightarrow R^K \overline{X}
\qquad f_\ast \colon R^L \overline{X}\rightarrow R^L\overline{Y}$$
be the obvious induced maps.  Then there is an almost finitely
generated projective $R^L
\overline{X}$-module, $\theta$ ,such that $f_\ast \theta  =
{\mathcal E}_L$ and $f^\ast \theta = {\mathcal E}_K$.
\end{proposition}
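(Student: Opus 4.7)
The natural candidate for $\theta$ is the module defined in direct analogy with ${\mathcal E}_K$ but using $R^L$-coefficients (pulled back along $f$) and the transport groupoid $\overline{X}$ in place of $\overline{Y}$. Explicitly, for each object $x \in X$, let $\theta(x)$ be the set of $\overline{X}$-equivariant collections
$$\{ \eta_{x'} \in \Hom (x,x')_{R^L \overline{X}} \ |\ x' \in X \}$$
satisfying $\eta_{x'} g = \eta_{x''}$ whenever $x' g = x''$, with right $R^L\overline{X}$-action by precomposition of morphisms. This is plainly an $R^L \overline{X}$-module by construction.

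My first step would be to verify that $\theta$ is almost finitely generated and projective. The path-component of $x$ in $\overline{X}$ is the $G$-orbit $Gx$, and an $\overline{X}$-equivariant collection on $Gx$ is uniquely determined by its value at any chosen representative $x_0$ of the orbit. A short check using the stabiliser of $x_0$ (essentially the same finite-group averaging that appears in the proof of theorem \ref{FINI}) shows that the restriction $\theta|_{\Or (x)}$ is isomorphic as an $(R^L \overline{X})|_{\Or (x)}$-module to a summand of a representable module $\Hom (-,x_0)_{R^L \overline{X}|_{\Or (x)}}$, hence is finitely generated and projective.

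Next I would verify the two identities. For $f^\ast \theta = {\mathcal E}_K$ the algebroid homomorphism $f^\ast \colon R^L \overline{X} \rightarrow R^K \overline{X}$ is the identity on objects of $\overline{X}$ and is induced by the coefficient restriction $R^L \rightarrow R^K$, so the tensor-product pushforward of $\theta$ along $f^\ast$ just rewrites each $\eta_{x'}$ with its coefficients pulled back to $R^K$, and the defining equivariance condition is preserved; the resulting $R^K \overline{X}$-module is visibly ${\mathcal E}_K$. For $f_\ast \theta = {\mathcal E}_L$ the functor $f_\ast \colon \overline{X} \rightarrow \overline{Y}$ induces an algebroid homomorphism $f_\ast \colon R^L \overline{X}\rightarrow R^L \overline{Y}$, and one checks that the assignment $(\eta_{x'})_{x'\in X} \mapsto (f_\ast \eta_{x'})_{x'\in X}$, which groups morphisms ending in $x'$ according to their image $f(x')$, descends to a well-defined $R^L \overline{Y}$-module isomorphism $\theta \otimes_{R^L \overline{X}} R^L \overline{Y} \cong {\mathcal E}_L$ by matching up equivariance relations on both sides.

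The main obstacle is the verification of $f_\ast \theta = {\mathcal E}_L$: one must write out the algebraic tensor product carefully, show that the relations $(\eta \otimes \xi \mu ) \sim (\eta \mu , \xi)$ in the definition of $\otimes_{R^L \overline{X}}$ precisely collapse the $\overline{X}$-equivariant collections over $X$ down to the $\overline{Y}$-equivariant collections over $Y$, and use equivariance of $f$ to confirm that distinct $G$-orbits in $X$ above the same orbit in $Y$ contribute compatibly. Once this identification is in hand, the remaining bookkeeping (naturality in the pair $(X,K)$) is formal.
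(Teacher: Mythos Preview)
Your proposal is correct and takes essentially the same approach as the paper: you define $\theta$ by exactly the same formula the paper uses, and the paper then simply asserts that the equations $f_\ast \theta = {\mathcal E}_L$ and $f^\ast \theta = {\mathcal E}_K$ are ``easily checked,'' whereas you supply the details. One minor simplification: for the almost finitely generated projective property you need not invoke any averaging argument---just as for ${\mathcal E}_K$ earlier in the paper, the restriction $\theta|_{\Or(x)}$ is isomorphic to the full representable module $\Hom(-,x)_{R^L\overline{X}|_{\Or(x)}}$, not merely a summand of one.
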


\begin{proof}
Let $x\in X$.  Let $\theta (x)$ denote the set of collections
$$\{ \eta_y \in \Hom (x,y)_{R^L \overline{X}} \ |\ y\in X \}$$
such that the formula
$$\eta_y g = \eta_z$$
is satisfied for all elements $g\in G$ such that $yg=z$.

The assignment $x\mapsto \theta (x)$ forms an $R^L
\overline{X}$-module, $\theta$.  The $C_0(K)
\overline{X}$-action is defined by composition of morphisms.

The equations $f_\ast (\theta ) = {\mathcal E}_L$ and $f^\ast \theta
= {\mathcal E}_K$ are easily checked.
\end{proof}

\begin{lemma}
The map $\gamma \colon \K_\mathrm{hom}^G (X;A)\rightarrow \K
(A\overline{X})$ is natural for proper $G$-complexes.
\end{lemma}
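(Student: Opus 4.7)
The plan is to reduce the statement to a diagram chase that combines the naturality of restriction (proposition \ref{restriction2}), the naturality of descent (theorem \ref{descentealg}), and two applications of theorem \ref{Mproduct2a} via the module $\theta$ from the previous proposition.

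First, since $\gamma$ is defined as a direct limit over $G$-compact subcomplexes, it suffices to establish naturality at each $G$-compact level. Accordingly, I fix an equivariant map $f\colon X\rightarrow Y$ of proper $G$-complexes and $G$-compact subcomplexes $K\subseteq X$, $L\subseteq Y$ with $f(K)\subseteq L$, and I aim to show that the square consisting of $\gamma_K$, $\gamma_L$, the map $f_\ast \colon \KK_G(R^K,A)\rightarrow \KK_G(R^L,A)$ induced by $f^\ast \colon R^L\rightarrow R^K$, and the map $f_\ast \colon \K(A\overline{X})\rightarrow \K(A\overline{Y})$ induced by $\overline{f}\colon \overline{X}\rightarrow \overline{Y}$, commutes up to homotopy.

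Next, I would factor $\gamma_K$ and $\gamma_L$ into their three constituent stages and verify naturality stage by stage. The restriction stage $i^\ast$ is natural in the underlying groupoid by proposition \ref{restriction2}, applied along the functor $\overline{f}$. The descent stage $D$ is natural in both variables by theorem \ref{descentealg}, producing a commutative square with $R^L\overline{X}\rightarrow R^K\overline{X}$ and $A\overline{X}\rightarrow A\overline{Y}$ on the source and target algebroid sides respectively. The module-smash stage is the new input: using the module $\theta$ over $R^L\overline{X}$ from the preceding proposition, the identities $f^\ast \theta = {\mathcal E}_K$ and $f_\ast \theta = {\mathcal E}_L$ together with two applications of the commutative diagram in theorem \ref{Mproduct2a} factor both ${\mathcal E}_K\wedge$ and ${\mathcal E}_L\wedge$ through the common map $\theta \wedge \colon \KK(R^L\overline{X}, A\overline{Y}) \rightarrow \K(A\overline{Y})$. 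Combined with naturality of the smash in the second variable, applied to the homomorphism $A\overline{X}\rightarrow A\overline{Y}$, this supplies the missing commutation linking the two sides.

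Splicing these three commutative squares along the common intermediate spectrum $\KK(R^L\overline{X}, A\overline{Y})$ yields the desired compatibility $f_\ast \circ \gamma_K = \gamma_L \circ f_\ast$, and passing to direct limits gives naturality of $\gamma$ itself. The main obstacle is the book-keeping: three distinct source algebroids $R^K\overline{X}$, $R^L\overline{X}$, $R^L\overline{Y}$ appear, and one must route the module data through the common refinement $R^L\overline{X}$ in order to apply theorem \ref{Mproduct2a} in both of the required directions simultaneously. Once that common refinement is in hand, the verification reduces to a routine diagram chase.
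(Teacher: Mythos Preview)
Your proposal is correct and follows essentially the same route as the paper: both arguments reduce to the $G$-compact level, route through the intermediate algebroid $R^L\overline{X}$, and use the bridging module $\theta$ from the preceding proposition together with naturality of restriction and descent to assemble a single commutative diagram, then pass to direct limits. One minor quibble: the specific references you invoke---proposition \ref{restriction2} and theorem \ref{descentealg}---concern compatibility of restriction and descent with the \emph{product}, whereas what you actually need at those stages is plain functoriality of restriction in the groupoid variable and of descent in the algebra variables; both are immediate from the definitions, so the argument goes through unchanged.
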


\begin{proof}
Let $f\colon (X,K)\rightarrow (Y,L)$ be a map of pairs of
$G$-simplicial complexes, where the subcomplexes $K$ and $L$ are $G$-compact.  Then
by the above proposition, and naturality of the product and descent
map in algebraic $KK$-theory, we have a commutative diagram.
$$\xymatrix{
& \KK_{\overline{X}} (R^K,A) \ar[r] \ar[d] & \KK (R^K \overline{X} , A\overline{X} ) \ar[d] \ar[rd] & \\
\KK_G (R^K,A) \ar[ru] \ar[d] & \KK_{\overline{X}} (R^L,A) \ar[r] & \KK (R^L\overline{X}. A\overline{X} ) \ar[d] \ar[r] & \KH (A\overline{X} ) \ar[d] \\
\KK_G (R^L,A) \ar[ru] \ar[r] & \KK_{\overline{Y}} (R^L,A) \ar[u] \ar[rd] & \KK (R^L\overline{X}. A\overline{Y} ) \ar[r] & \KH (A\overline{Y} ) \\
& & \KK (R^L\overline{Y} , A\overline{Y} ) \ar[u] \ar[ru] & \\
}$$

Taking direct limits, the desired result follows.
\end{proof}

\begin{lemma}
The composite map $i_\ast \gamma \colon \K_\mathrm{hom}^G
(X;A)\rightarrow \KH (AG)$ is the index map.
\end{lemma}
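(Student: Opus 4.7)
The plan is to show that $\beta$ and $i_\ast \gamma$ agree on a fixed $G$-compact subcomplex $K \subseteq X$, and then pass to direct limits. Fix a $G$-compact subcomplex $K$ and a class $[\alpha] \in \KK_G(R^K,A)$ represented by a $G$-equivariant homomorphism $\alpha \colon J^{2n} R^K \rightarrow M_\infty(A^{S^n})$. Unpacking the definitions, the index map $\beta[\alpha]$ is the product with $\mathcal{E}_K = R^K G$ (as a module over itself) of the descent class $D_G[\alpha] \in \KK(R^K G, AG)$ obtained from the $G$-equivariant convolution $\alpha_\ast \colon (J^{2n}R^K)G \rightarrow M_\infty(A^{S^n})G$. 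On the other hand, $i_\ast \gamma[\alpha]$ is obtained by first restricting $\alpha$ via $i \colon \overline{X} \rightarrow G$ to an $\overline{X}$-equivariant map, applying $D_{\overline{X}}$, tensoring with the almost finitely generated projective $R^K\overline{X}$-module $\mathcal{E}_K$ defined earlier, and then applying $i_\ast \colon \KH(A\overline{X}) \rightarrow \KH(AG)$.

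The first key step is a naturality observation for the descent map. The functor $i \colon \overline{X} \rightarrow G$ induces homomorphisms of convolution algebroids $i \colon R^K\overline{X} \rightarrow R^K G$ and $i \colon A\overline{X} \rightarrow AG$, and since $i^\ast \alpha$ and $\alpha$ are literally the same homomorphism (with different equivariance structures), the square
$$\begin{array}{ccc}
(J^{2n}R^K)\overline{X} & \xrightarrow{(i^\ast\alpha)_\ast} & M_\infty(A^{S^n})\overline{X} \\
\downarrow i & & \downarrow i \\
(J^{2n}R^K)G & \xrightarrow{\alpha_\ast} & M_\infty(A^{S^n})G \\
\end{array}$$
commutes. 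Reading this as an equality in $\KK(R^K\overline{X}, AG)$, we conclude $i_\ast D_{\overline{X}}(i^\ast \alpha) = i^\ast D_G(\alpha)$, where the $i_\ast$ on the left is naturality of $\KK$ in the second variable applied to $i \colon A\overline{X} \rightarrow AG$, and $i^\ast$ on the right is the contravariant map induced by $i \colon R^K\overline{X} \rightarrow R^K G$.

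The second key step is to match up the module products. Naturality of $\mathcal{E}_K \wedge$ in the second variable (Theorem~\ref{Mproduct2a}) gives $i_\ast \circ (\mathcal{E}_K^{\overline{X}} \wedge) = (\mathcal{E}_K^{\overline{X}} \wedge) \circ i_\ast$, so $i_\ast \gamma[\alpha] = (\mathcal{E}_K^{\overline{X}} \wedge)\bigl(i_\ast D_{\overline{X}}(i^\ast \alpha)\bigr) = (\mathcal{E}_K^{\overline{X}} \wedge) \circ i^\ast D_G[\alpha]$. Applying the second part of Theorem~\ref{Mproduct2a} to the homomorphism $i \colon R^K\overline{X} \rightarrow R^K G$, this equals $\bigl(\mathcal{E}_K^{\overline{X}} \otimes_{R^K\overline{X}} R^K G\bigr) \wedge D_G[\alpha]$. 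To conclude, one checks that there is a canonical isomorphism of $R^K G$-modules
\[
\mathcal{E}_K^{\overline{X}} \otimes_{R^K\overline{X}} R^K G \;\cong\; R^K G \;=\; \mathcal{E}_K^G,
\]
which then gives $i_\ast \gamma[\alpha] = \beta[\alpha]$. Both sides of this identification are naturally computed orbit-by-orbit on $X$, and the $G$-equivariance condition built into $\mathcal{E}_K^{\overline{X}}(x)$ (namely $\eta_y g = \eta_z$ when $yg=z$) is precisely what collapses under tensor product with $R^K G$ to yield $R^K G$ itself.

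The main obstacle is verifying this last module isomorphism: one must show that tensoring the $\overline{X}$-module of equivariant sections over the orbit with the quotient algebra $R^K G$ identifies each orbit-sum to a single copy of $R^K G$, compatibly with the $R^K G$-action by composition. Once this is done, all the remaining commutativities are naturality statements already proved in the paper, and the passage from $G$-compact $K$ to $X$ is an immediate direct-limit argument using the definition $\K_\mathrm{hom}^G(X;A) = \colim_K \KK_G(R^K,A)$.
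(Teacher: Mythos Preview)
Your proof is correct and follows essentially the same route as the paper's: both arguments fix a $G$-compact $K$, use naturality of the descent map with respect to the functor $i\colon \overline{X}\to G$ together with naturality of the module product (Theorem~\ref{Mproduct2a}) to produce a commutative diagram linking the top row (the index map) to the bottom row (the map $\gamma$) through the intermediate object $\KK(R^K\overline{X},AG)$, and then pass to the direct limit. The paper compresses this into a single diagram and the phrase ``naturality properties of the various descent maps and products,'' whereas you have unpacked the two naturality squares separately and made explicit the module identification $\mathcal{E}_K^{\overline{X}}\otimes_{R^K\overline{X}} R^K G \cong R^K G$ that the paper leaves implicit in the commutativity of the upper-right triangle.
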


\begin{proof}
Let $K$ be a $G$-compact subcomplex of $X$.  The naturality properties
of the various descent maps and products give us a commutative
diagram
$$\xymatrix{
\KK_G (R^K,A) \ar[r] \ar[dd] &\KK (R^K G, AG) \ar[r] \ar[d] & \KH (AG) \\
& \KK(R^K \overline{X}, AG) \ar[ru] & \\
\KK_{\overline{X}} (R^K,A) \ar[r] &\KK (R^K\overline{X}, A\overline{X}) \ar[r] \ar[u] & \KH (A\overline{X}) \ar[uu] \\
}$$

Here the top row is the index map, and the composite of the bottom
row and the vertical map on the left is the map $\gamma$. Taking
direct limits, the desired result follows.
\end{proof}

\begin{lemma}
Let $H$ be a finite subgroup of $G$.  Then the map $\gamma
\colon \K_\mathrm{hom}^G (G/H;A)\rightarrow \KH (A\overline{G/H})$
is a stable equivalence of spectra.
\end{lemma}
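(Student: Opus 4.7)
The plan is to reduce the statement to theorem \ref{FINI} (the Green-Julg type result for finite groups) via the equivalence of groupoids $\overline{G/H} \simeq H$. First I would make this equivalence explicit. Picking the basepoint $eH \in G/H$, the group of self-morphisms of $eH$ in $\overline{G/H}$ is $\{g \in G \colon Hg = H\} = H$, and transitivity of the $G$-action ensures every object of $\overline{G/H}$ is isomorphic to $eH$. Hence the inclusion of the full subcategory on $\{eH\}$ gives an equivalence of groupoids $\iota \colon H \hookrightarrow \overline{G/H}$, and the composite $H \hookrightarrow \overline{G/H} \rightarrow G$ is simply the subgroup inclusion. I would also note that $G/H$ is itself $G$-compact as a $0$-dimensional $G$-complex (it is a single $G$-orbit), so $\K_\mathrm{hom}^G (G/H;A) = \KK_G (R^{G/H},A)$ with no nontrivial direct limit.

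Next I would invoke theorem \ref{reseq}: restriction along $\iota$ produces a stable equivalence from $\KK_{\overline{G/H}}(R^{G/H},A)$ to $\KK_H(\iota^* R^{G/H}, \iota^* A)$. Since $eH$ is $H$-fixed, $\iota^* R^{G/H}$ is the single-object algebroid $R$ with trivial $H$-action, and $\iota^* A$ is $A$ equipped with the $H$-action restricted from $G$. On the target side, the convolution algebroid $A\overline{G/H}$ is equivalent as an $R$-algebroid to the one-object group algebra $AH$ (via the same inclusion of the full subcategory on $\{eH\}$), and theorem \ref{equivalent2} then gives $\KH (A\overline{G/H}) \simeq \KH (AH)$. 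Under these identifications, the almost finitely generated projective module $\mathcal E_{G/H}$ restricts to the rank-one free $RH$-module that is used in the definition of the index map for $H$.

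Putting these three identifications together, $\gamma$ for the pair $(G/H,A)$ is intertwined with the index map $\KK_H(R,A) \rightarrow \KH(AH)$ for the finite group $H$. This map is a stable equivalence by theorem \ref{FINI}, and the result follows.

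The main obstacle I expect is not any individual equivalence but the bookkeeping required to verify that the three identifications (the groupoid restriction on the source, the algebroid equivalence on the target, and the correspondence of $\mathcal E_{G/H}$ with the module used for $H$) fit together coherently, so that $\gamma$ really does correspond to the index map for $H$ rather than a twist of it. This amounts to a diagram chase using the naturality of the descent map (theorem \ref{descentealg}) and of the almost-finitely-generated-projective module action (theorem \ref{Mproduct2a}), together with proposition \ref{restriction2} for the compatibility of restriction with products.
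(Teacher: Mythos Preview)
Your overall strategy --- reduce to the finite group $H$ via the equivalence of groupoids $\overline{G/H}\simeq H$ and then invoke theorem \ref{FINI} --- is the same as the paper's. However, one step is wrong, and it is precisely the step where the paper has to do some extra work.

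The error is the claim that $\iota^\ast R^{G/H}$ is the algebra $R$ with trivial $H$-action. Restriction along $\iota\colon H\hookrightarrow\overline{G/H}$ only restricts the \emph{action}; it does not change the underlying algebra. A $\overline{G/H}$-algebra assigns to each object $x\in G/H$ an algebra, and here that algebra is $R^{G/H}$ at every object (since $R^{G/H}$ came from a $G$-algebra via the functor $\overline{G/H}\to G$). So $\iota^\ast R^{G/H}$ is $R^{G/H}$ with its restricted $H$-action, not $R$. The fact that $eH$ is $H$-fixed tells you nothing here: $G/H$ is a discrete set, typically infinite, and is certainly not $H$-equivariantly contractible to $\{eH\}$. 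Consequently your chain of identifications lands you in $\KK_H(R^{G/H},A)$, not $\KK_H(R,A)$, and you have not yet reached the index map for $H$.

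What is missing is exactly the ingredient the paper supplies: an induction--restriction (Frobenius reciprocity) statement that the composite
\[
\KK_G(R^{G/H},A)\xrightarrow{\ i^\ast\ }\KK_H(R^{G/H},A)\xrightarrow{\ j^\ast\ }\KK_H(R,A)
\]
is a stable equivalence, where $j\colon R\to R^{G/H}$ is induced by the constant map $G/H\to +$. The paper then assembles a commutative diagram comparing the $R^{G/H}$-column with the $R$-column after descent, so that $\gamma$ (translated to $H$ via $\overline{G/H}\simeq H$) matches the index map $\KK_H(R,A)\to\KH(AH)$. Your final paragraph correctly anticipates that the bookkeeping with descent and the module $\mathcal E_{G/H}$ is where the work lies, but you also need this $j^\ast i^\ast$ equivalence; without it the reduction does not go through.
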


\begin{proof}
Let $i\colon H\hookrightarrow G$ be the inclusion isomorphism.  Then
the group $H$ and groupoid $\overline{G/H}$ are equivalent.  By
theorem \ref{reseq}, and the naturality of restriction maps, the map
$\gamma$ is equivalent to the composition
$$\KK_G(R^{G/H},A)\stackrel{i^\ast}{\rightarrow} \KK_H (R^{G/H},A) \stackrel{D}{\rightarrow} \KK (R^{G/H} H, AH ) \stackrel{{\mathcal E}_{G/H}\wedge}{\rightarrow} \KH (AH)$$

Let $j\colon R \rightarrow R^{G/H}$ be induced by the constant map $G/H\rightarrow  +$.
Then we have a commutative diagram
$$\begin{array}{ccc}
\KK_G (R^{G/H},A) & & \\
\downarrow & & \\
\KK_H (R^{G/H},A) & \stackrel{j^\ast}{\rightarrow} & \KK_H (R,A) \\
\downarrow & & \downarrow \\
\KK_H (R^{G/H}H,AH) & \stackrel{j^\ast}{\rightarrow} & \KK_H (RH,AH) \\
\downarrow & & \downarrow \\
\K (AH) & = & \K (AH) \\
\end{array}$$

A straightforward calculation tells us that the composite $j^\ast
i^\ast \colon \KK_G (R^{G/H},A)\rightarrow \KK_H (R,A)$ is a stable
equivalence of spectra.  The composite map on the right,
$\beta
\colon \KK_H (R,A)\rightarrow \K (AH)$ is the index map. Since the
group $H$ is finite, the space $+$ is a model for the classifying
space
$E(H,\EG )$.

But the index map is a stable equivalence for finite
groups by theorem \ref{FINI}, so we are done.
\end{proof}

By theorem \ref{KhomA}. the equivariant $K$-homology functor
$\K_\mathrm{hom}^G(-;A)$ is $G$-excisive.  We can therefore use the
above three lemmas to apply theorem \ref{DL} to the study of the
index map; we immediately obtain the following result.

\begin{theorem}
Let ${\mathbb E}'$ be a $G$-excisive functor from the category of
proper $G$-$CW$-complexes to the category of symmetric spectra.
Suppose we have a natural transformation $\alpha \colon {\mathbb
E}'(X)\rightarrow \K (A\overline{X})$ such that the map
$$\alpha \colon {\mathbb E}'(G/H)\rightarrow \K (A\overline{G/H})$$
is a stable equivalence for every finite subgroup, $H$, of the group
$G$.

Then, up to stable equivalence, the composite $\alpha i_\ast \colon
{\mathbb E}' (X)\rightarrow \K (AG)$ is the map $\beta$. \noproof
\end{theorem}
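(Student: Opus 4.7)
The plan is to deduce the theorem directly from the uniqueness clause of theorem \ref{DL}. The previous section has, through $G$-excision (theorem \ref{KhomA}) together with the three lemmas just established, produced a canonical pair $(\K_\mathrm{hom}^G(-;A),\gamma)$ consisting of a $G$-excisive functor together with a natural transformation $\gamma \colon \K_\mathrm{hom}^G(X;A) \rightarrow \KH(A\overline{X})$ which is a stable equivalence on $G/H$ for every finite subgroup $H \leq G$. The pair $({\mathbb E}',\alpha)$ in the hypothesis satisfies exactly the same conditions.

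First I would apply theorem \ref{DL} to the $G$-homotopy invariant functor $X \mapsto \KH(A\overline{X})$. The uniqueness clause of that theorem furnishes a stable equivalence of $G$-excisive functors $\Phi \colon {\mathbb E}'(X) \rightarrow \K_\mathrm{hom}^G(X;A)$ intertwining the two approximating natural transformations, that is to say such that $\gamma \circ \Phi$ and $\alpha$ agree up to stable equivalence. Since $\Phi$ is itself a stable equivalence, for the purposes of identifying composites with $i_\ast$ we may freely replace $\alpha$ by $\gamma \circ \Phi$.

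Next I would compose with the map $i_\ast \colon \KH(A\overline{X}) \rightarrow \KH(AG)$ induced by the faithful functor $i \colon \overline{X} \rightarrow G$. By the lemma preceding this theorem (the identification $i_\ast \gamma = $ index map), the composite $i_\ast \circ \gamma \circ \Phi$ is, up to the stable equivalence $\Phi$, the index map on the excisive model $\K_\mathrm{hom}^G(-;A)$. But the $KH$-assembly map $\beta$ is defined (in the Assembly section) as precisely the composite of $i_\ast$ with the natural transformation from \emph{any} $G$-excisive approximation of $X \mapsto \KH(A\overline{X})$ — and it is well defined up to stable equivalence by the uniqueness in theorem \ref{DL}. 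Specialising this definition to the pair $(\K_\mathrm{hom}^G(-;A),\gamma)$ identifies $\beta$ with the index map, while specialising it to $({\mathbb E}',\alpha)$ identifies $\beta$ with $\alpha \circ i_\ast$. Hence $\alpha \circ i_\ast$ is stably equivalent to $\beta$, as required.

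The only real obstacle in this argument is the careful bookkeeping of the natural transformations: one must ensure that the stable equivalence $\Phi$ produced by theorem \ref{DL} really does intertwine $\alpha$ with $\gamma$ (not merely that the two spectra are abstractly equivalent), so that the identification descends through the post-composition with $i_\ast$. This is, however, precisely the content of the uniqueness clause of theorem \ref{DL} together with the naturality of $i_\ast$, and no additional construction is required.
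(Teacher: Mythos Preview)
Your proposal is correct and matches the paper's own argument. The paper states the theorem with \texttt{\textbackslash noproof} and simply remarks beforehand that it follows ``immediately'' by combining theorem \ref{KhomA} (excisiveness of $\K_\mathrm{hom}^G(-;A)$) with the three preceding lemmas and the uniqueness clause of theorem \ref{DL}; you have spelled out precisely this deduction. One cosmetic point: you write $\alpha \circ i_\ast$ in a couple of places where the composition order should be $i_\ast \circ \alpha$, but the paper itself is loose about this and the meaning is clear.
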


By definition of the $KH$-assembly map, the following therefore
holds.

\begin{corollary}
The index map is the $KH$-assembly map. \noproof
\end{corollary}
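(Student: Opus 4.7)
The proof will be essentially a one-line application of the preceding theorem to the data we have already assembled. The plan is to take $\mathbb{E}'(X) = \K_{\mathrm{hom}}^G(X;A)$ as the $G$-excisive functor and $\alpha = \gamma$ as the natural transformation, and then verify that the hypotheses of the preceding theorem are met so that its conclusion identifies the index map with the $KH$-assembly map up to stable equivalence.

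First, I would check each of the three hypotheses. The functor $X\mapsto \K_{\mathrm{hom}}^G(X;A)$ is $G$-excisive by Theorem \ref{KhomA}. The map $\gamma\colon \K_{\mathrm{hom}}^G(X;A)\rightarrow \KH(A\overline{X})$ is a natural transformation on the category of proper $G$-$CW$-complexes by the first lemma preceding this corollary. The fact that $\gamma$ is a stable equivalence on the orbits $G/H$ for $H$ finite is exactly the content of the last lemma preceding this corollary (which ultimately reduced to Theorem \ref{FINI}, the Green--Julg statement for finite groups).

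Once those hypotheses are in place, the previous theorem applies and says that, up to stable equivalence, the composite $i_\ast \gamma\colon \K_{\mathrm{hom}}^G(X;A)\rightarrow \KH(AG)$ coincides with the map $\beta$ produced by the Davis--L\"uck machinery, which is the $KH$-assembly map by definition of the latter. On the other hand, the middle of the three lemmas above identifies this same composite $i_\ast \gamma$ with the index map. Combining these two identifications gives the corollary.

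There is essentially no obstacle here: all the real work has been done in the three lemmas preceding the corollary and in Theorem \ref{FINI}. The only thing to be slightly careful about is that the preceding theorem is stated for functors on proper $G$-$CW$-complexes landing in symmetric spectra with a natural transformation into $\K(A\overline{-})$, and one must confirm that $\K_{\mathrm{hom}}^G(-;A)$ and $\gamma$ are set up with this variance and codomain --- but this is immediate from the constructions in the previous section.
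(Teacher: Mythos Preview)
Your overall strategy matches the paper's: the corollary is immediate from the preceding theorem together with the definition of the $KH$-assembly map, and all the work is in the three lemmas and Theorem~\ref{KhomA}. The paper's own proof is literally the single sentence ``By definition of the $KH$-assembly map, the following therefore holds.''

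There is, however, a small misreading in your application of the theorem. In the preceding theorem the symbol $\beta$ denotes the \emph{index map} (defined at the start of the section as $\beta\colon \K_{\mathrm{hom}}^G(X;A)\to\KH(AG)$), not the Davis--L\"uck map. So applying the theorem with $\mathbb{E}'=\K_{\mathrm{hom}}^G(-;A)$ and $\alpha=\gamma$ is tautological: it just says $i_\ast\gamma=\beta$, i.e.\ the index map equals itself (which is the middle lemma). The correct invocation is the other way round: the $KH$-assembly map is \emph{by definition} a composite $i_\ast\alpha$ for some $G$-excisive $\mathbb{E}'$ and natural transformation $\alpha\colon\mathbb{E}'(X)\to\KH(A\overline{X})$ that is a stable equivalence on each $G/H$ with $H$ finite (this is exactly how it is set up in the Assembly section via Theorem~\ref{DL}). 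Feeding \emph{that} pair into the preceding theorem yields that the $KH$-assembly map coincides with $\beta$, the index map. Your ingredients are all correct; only the direction of the final comparison needs to be reversed.
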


Now, the $KH$-isomorphism conjecture holds for finite groups with
any coefficients.  It follows from more general results in \cite{BL}
that the conjecture also holds for the integers.

\begin{theorem}
Let $G$ be a finitely generated abelian group.  Then the
$KH$-isomorphism conjecture holds for $G$ with any coefficients.
\end{theorem}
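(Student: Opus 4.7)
The plan is to reduce the general finitely generated abelian case to two base cases already available: finite groups and $\Z$. By the structure theorem, any finitely generated abelian group splits as $G \cong \Z^n \times F$ with $F$ finite. For $F$ finite, the $KH$-isomorphism conjecture holds because the index map has been identified with the $KH$-assembly map in the preceding corollary, and Theorem \ref{FINI} shows the index map is a stable equivalence for finite groups. For $\Z$, the conjecture holds by the cited results from \cite{BL}. So the task is to propagate these base cases to the full family of finitely generated abelian groups.

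The main step is an inductive one on the rank $n$, where I would show: if the $KH$-isomorphism conjecture holds for $H$ with arbitrary coefficients in any $H$-algebra, then it holds for $H \times \Z$ with arbitrary coefficients. Combined with the cases of finite groups and $\Z$, iterating this gives the conjecture for $\Z^n \times F$.

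To establish the inductive step, I would exploit the model $\underline{E}(H \times \Z) \simeq \underline{E}H \times \R$ and cover $\R$ by two contractible open half-lines with contractible intersection. This yields a Mayer--Vietoris square of $(H\times\Z)$-$CW$-complexes giving rise, via the excision theorem (Theorem \ref{KhomA}) and the long exact sequences of Theorem \ref{les2}, to a long exact sequence in the equivariant $K$-homology $\K^{H\times \Z}_{\mathrm{hom}}(-;A)$. On the target side, one matches this with a Bass--Heller--Swan-type decomposition for $KH$-theory, identifying $\KH(A[H\times\Z])$ in terms of $\KH(AH)$ and $\KH(AH)$ shifted by one (the latter arising from the nil-invariance of $KH$). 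The $KH$-assembly map, which by the corollary above equals the index map built from descent (Theorem \ref{descentealg}) and the module product (Theorem \ref{Mproduct2a}), is then compatible with both sequences, so the Five Lemma promotes the inductive hypothesis for $H$ to the conjecture for $H \times \Z$.

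The hardest part of this plan will be (a) the spectrum-level Bass--Heller--Swan decomposition for $\KH$ of twisted group rings over $\Z$ applied to the non-trivial coefficient $H$-algebra $A$ and its reinterpretation as the image of the Mayer--Vietoris sequence under the assembly map, and (b) matching it naturally with the Mayer--Vietoris sequence coming from the half-line cover of $\R$ in equivariant $K$-homology. In practice, rather than re-prove these from scratch it is cleaner to invoke the transitivity principle of \cite{BL}: since this paper has established that the $KH$-assembly map defined by the Davis--L\"uck machinery coincides with the index map constructed from algebraic $KK$-theory, the inheritance results of Bartels and L\"uck for extensions $1 \to \Z \to H\times \Z \to H \to 1$ apply verbatim, so the result propagates from finite groups and $\Z$ to all of $\Z^n \times F$ by finite iteration.
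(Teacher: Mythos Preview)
Your approach is correct but takes a different route from the paper. The paper does not argue by induction on the rank; instead it writes $G \cong \Z^q \oplus \Z/p_1 \oplus \cdots \oplus \Z/p_k$, takes $\underline{E}G = \R^q$, and asserts a commutative diagram in which both $K_n^G(\underline{E}G;A)$ and $KH_n(AG)$ decompose as direct sums over the cyclic factors, then invokes the known cases for $\Z$ and for finite groups on each summand. This is very short, but the additive splitting is not justified in the text and is in fact suspect as stated: for instance $KH_n(A[\Z \times \Z/p])$ is $KH_n(A\Z/p)\oplus KH_{n-1}(A\Z/p)$ by Bass--Heller--Swan for $KH$, not $KH_n(A\Z)\oplus KH_n(A\Z/p)$.

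Your inductive strategy---Mayer--Vietoris on the half-line cover of $\R$ matched against a Bass--Heller--Swan decomposition for $KH$, or more efficiently the inheritance principle of \cite{BL} for extensions with quotient $\Z$---is the standard way to propagate such conjectures along products with $\Z$, and it sidesteps the splitting issue entirely. Since the preceding corollary identifies the Davis--L\"uck $KH$-assembly map with the index map, the Bartels--L\"uck inheritance results apply verbatim, so your final paragraph is exactly the clean way to finish; the Mayer--Vietoris/BHS matching you sketch in the middle is then unnecessary scaffolding rather than a gap.
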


\begin{proof}
By the fundamental theorem of abelian groups, we have an isomorphism
$$G\cong \Z^q \oplus {\mathbb Z}/p_1 \oplus \cdots \oplus {\mathbb Z}/p_k$$
where the $p_i$ are prime numbers.  We have classifying spaces for
proper actions
$$\underline{E}\Z  = \R \qquad \underline{E}\Z/p_i = + .$$

Thus the group $G$ has
classifying space ${\mathbb R}^q$.  The copies of the group $\Z$ act
by translation, and the copies of finite groups act trivially.

It follows that we have a commutative diagram
$$\begin{array}{ccc}
K_n^G (\EG ;A) & \cong & K_n^\Z ( \R ;A)^q \oplus K_n^{\Z/p_1} (+;A) \oplus \cdots \oplus K_n^{\Z/p_k} (+;A) \\
\downarrow & & \downarrow \\
KH_n^G (AG) & \cong & KH_n (A\Z )^q \oplus KH_n( A\Z/p_1) \oplus \cdots \oplus K_n (A\Z/p_k) \\
\end{array}$$
where the vertical arrows are copy of the assembly map at the level
of groups.

We know that the $KH$-isomorphism conjecture holds for finite groups
and for the integers.  The direct sum of assembly maps on the right
is thus an isomorphism.

It follows that the map on the left is also an isomorphism, and we
are done.
\end{proof}

\bibliographystyle{plain}

\begin{thebibliography}{10}

\bibitem{BL}
A.C. Bartels and W.~L{\"u}ck.
\newblock {Isomorphism conjecture for homotopy $K$-theory and groups acting on
  trees}.
\newblock {\em Journal of Pure and Applied Algebra}, 205:660--696, 2006.

\bibitem{BCH}
P.~Baum, A.~Connes, and N.~Higson.
\newblock {Classifying spaces for proper actions and $K$-theory of group
  $C^\ast$-algebras}.
\newblock In S.~Doran, editor, {\em $C^\ast$-algebras: 1943--1993}, volume 167
  of {\em Contemporary Mathematics}, pages 241--291. American Mathematical
  Society, 1994.

\bibitem{Black}
B.~Blackadar.
\newblock {\em $K$-Theory for Operator Algebras}, volume~5 of {\em Mathematical
  Sciences Research Institute Publications}.
\newblock Cambridge University Press, 1998.

\bibitem{CoT}
G.~Cortinas and A.~Thom.
\newblock {Bivariant algebraic $K$-theory}.
\newblock {\em Journal f{\"u}r die Reine und Angewandte Mathematik (Crelle's
  Journal)}, 610:267--280, 2007.

\bibitem{CuT}
J.~Cuntz and A.~Thom.
\newblock {Algebraic $K$-theory and locally convex algebras}.
\newblock {\em Mathematische Annalen}, 334:339--371, 2006.

\bibitem{DL}
J.~Davis and W.~L{\"u}ck.
\newblock {Spaces over a category and assembly maps in isomorphism conjectures
  in $K$- and $L$-theory}.
\newblock {\em $K$-theory}, 15:241--291, 1998.

\bibitem{FJ}
F.T. Farrell and L.~Jones.
\newblock {Isomorphism conjectures in algebraic $K$-theory}.
\newblock {\em Journal of the American Mathematical Society}, 6:377--392, 1993.

\bibitem{GJ}
P.G. Goerss and J.F. Jardine.
\newblock {\em Simplicial Homotopy Theory}, volume 174 of {\em Progress in
  Mathematics}.
\newblock Birkh{\"a}user Verlag, 1999.

\bibitem{GHT}
E.~Guentner, N.~Higson, and J.~Trout.
\newblock {\em Equivariant $E$-theory for $C^\ast$-algebras}, volume 148 of
  {\em Memoirs of the American Mathematical Society}.
\newblock American Mathematical Society, 2000.

\bibitem{HP}
I.~Hambleton and E.K. Pedersen.
\newblock {Identifying assembly maps in $K$- and $L$-theory}.
\newblock {\em Mathematische Annalen}, 328:27--57, 2004.

\bibitem{Hig}
N.~Higson.
\newblock {A characterization of $KK$-theory}.
\newblock {\em Pacific Journal of Mathematics}, 253--276, 1987.

\bibitem{Hig3}
N.~Higson.
\newblock {A primer on $KK$-theory}.
\newblock In {\em Operator theory: operator algebras and applications, part 1
  (Durham, NH, 1988)}, volume 51, part 1 of {\em Proceedings of Symposia in
  Pure Mathematics}, pages 239--283. American Mathematical Society, 1990.

\bibitem{HSS}
M.~Hovey, B.~Shipley, and J.~Smith.
\newblock Symmetric spectra.
\newblock {\em Journal of the American Mathematical Society}, 13:149--208,
  2000.

\bibitem{Jo2}
M.~Joachim.
\newblock {$K$-homology of $C^\ast$-categories and symmetric spectra
  representing $K$-homology}.
\newblock {\em Mathematische Annalen}, 327:641--670, 2003.

\bibitem{Kass}
C.~Kassel.
\newblock {Charact{\'e}re de Chern bivariant}.
\newblock {\em $K$-theory}, 3:367--400, 1989.

\bibitem{KL}
M.~Kreck and W.~L{\"u}ck.
\newblock {\em The Novikov Conjecture}, volume~33 of {\em Oberwolfach
  Seminars}.
\newblock Birkh{\"a}user, 2005.

\bibitem{Lan}
E.~Lance.
\newblock {\em Hilbert $C^\ast$-modules}.
\newblock Lectures in Mathematics, ETH Zurich. Birkh{\"a}user, 2002.

\bibitem{Mac}
S.~MacLane.
\newblock {\em Categories for the working mathematician}, volume~5 of {\em
  Graduate Texts in Mathematics}.
\newblock Springer, Berlin, 1998.

\bibitem{MN}
R.~Meyer and R.~Nest.
\newblock {The Baum-Connes conjecture via localisation of categories}.
\newblock {\em Topology}, 45:209--250, 2006.

\bibitem{Mi}
B.~Mitchell.
\newblock {\em Separable algebroids}, volume 333 of {\em Memoirs of the
  American Mathematical Society}.
\newblock American Mathematical Society, 1985.

\bibitem{Mitch2.5}
P.D. Mitchener.
\newblock {Symmetric $K$-theory spectra of $C^\ast$-categories}.
\newblock {\em $K$-theory}, 24:157--201, 2001.

\bibitem{Mitch6}
P.D. Mitchener.
\newblock {$C^\ast$-categories, groupoid actions, equivariant $KK$-theory, and
  the Baum-Connes conjecture}.
\newblock {\em Journal of Functional Analysis}, 214:1--39, 2004.

\bibitem{Nee}
A.~Neeman.
\newblock {\em Triangulated Categories}, volume 148 of {\em Annals of
  Mathematics}.
\newblock Princeton University Press, 2001.

\bibitem{Ska}
G.~Skandalis.
\newblock {Kasparov's bivariant $K$-theory and applications}.
\newblock {\em Expositiones Mathematicae}, 9:193--250, 1991.

\bibitem{Th}
A.~Thom.
\newblock {Connective $E$-theory and bivariant homology for $C^\ast$-algbras}.
\newblock Available at http://www.uni-math.gwdg.de/thom.

\bibitem{Val}
A.~Valette.
\newblock {\em Introduction to the Baum-Connes Conjecture}.
\newblock Lectures in Mathematics, ETH Zurich. Birkh{\"a}user, 2002.

\bibitem{We}
C.A. Weibel.
\newblock {Homotopy algebraic $K$-theory}.
\newblock In {\em Algebraic $K$-theory and Algebraic Number Theory (East-West
  Center, 1987)}, volume~83 of {\em Contemporary Mathematics}, pages 461--488.
  American Mathematical Society, 1989.

\bibitem{We2}
C.A. Weibel.
\newblock {\em An introduction to homological algebra}, volume~38 of {\em
  Cambridge Studies in Advanced Mathematics}.
\newblock Cambridge University Press, 1994.

\end{thebibliography}

\end{document}